\newtheorem{theorem}{Theorem}[section]
\newtheorem{lemma}[theorem]{Lemma}
\newtheorem{definition}[theorem]{Definition}
\newtheorem{remark}[theorem]{Remark}
\newtheorem{claim}[theorem]{Claim}
\newtheorem{conjecture}[theorem]{Conjecture}
\author{King-Yeung Lam\\
\scriptsize School of Mathematics, University of Minnesota\\ \scriptsize 127 Vincent Hall, 206 Church St. S.E., Minneapolis, MN 55455 USA\\ \scriptsize Email: adrian@math.umn.edu \ \scriptsize Phone Number: 612-625-0356}
\title{\huge \bf Concentration Phenomena of a Semilinear Elliptic Equation with Large Advection in an Ecological Model}
\begin{document}

\maketitle
\begin{abstract}
\noindent We consider a reaction-diffusion-advection equation arising from a biological model of migrating species. The qualitative properties of the globally attracting solution are studied and in some cases the limiting profile is determined. In particular, a conjecture of Cantrell, Cosner and Lou on concentration phenomena is resolved under mild conditions. Applications to a related parabolic competition system is also discussed.\\ Math. Subj. class:  35B30 (35J20 92D25) \\Keywords: concentration phenomenon; large advection; limiting profile; mathematical ecology
\end{abstract}

\section{Introduction}
In mathematical ecology, reaction-diffusion equations are often used to determine the factors behind the survival and extinction of animal populations. (See for examples $\cite{CC3, LE, O, SK}$). One well-known example is the following logistic reaction-diffusion model for population dynamics (See $\cite{CC2}$):

\begin{equation}
\left\{
\begin{array}{ll}
u_t  = d \Delta u + u [m(x) - u] & in \phantom{1} \Omega\times(0,\infty),\\
\frac{\partial u}{\partial \nu}  = 0 & on \phantom{1} \partial \Omega \times (0,\infty),
\end{array}\right.
\end{equation}

\noindent where $u(x,t)$ represents the population density, $\Delta = \sum_{i=1}^N \frac{\partial^2}{\partial x_i^2}$ is the Laplace operator in $\mathbb{R}^N$, $d > 0$ is the dispersal rate, $m(x)$ accounts for the local growth rate, $\Omega$ is the habitat of the population and is assumed to be a bounded region of $\mathbb{R}^N$ with smooth boundary $\partial \Omega$, and $\nu$ is the outward unit normal vector on $\partial \Omega$. The Neumann boundary condition, which coincides with the no-flux boundary condition, is imposed on $\partial \Omega$.\\

\noindent If the environment is spatially heterogeneous, i.e. $m(x)$ is non-constant, then it seems reasonable to assume that the population has a tendency to move up the gradient of $m(x)$ in addition to random dispersal. In this direction, Belgacem and Cosner $\cite{BC}$ proposed the following reaction-diffusion-advection equation:
\begin{equation} \label{eq0}
\left\{
\begin{array}{ll}
u_t =  \nabla\cdot( d\nabla u -  \alpha u \nabla m) + u ( m - u) & \phantom{1} in \phantom{1}\Omega \times (0,\infty),\\
d\frac{\partial u}{\partial \nu}  -  \alpha u \frac{\partial m}{\partial \nu} = 0 & \phantom{1} on \phantom{1} \partial \Omega \times (0,\infty),
\end{array}\right.
\end{equation}
\noindent where the parameter $\alpha \geq 0$ measures the rate at which the population moves up the gradient of $m(x)$. Again, the corresponding no-flux boundary condition, is imposed. For discussions on the modeling aspects, we refer to $\cite{BC,B}$ and the references therein.\\

\noindent The dynamics of $(\ref{eq0})$ seems simple. In fact, it was established in $\cite{BC,CL2}$ that \textit{if we assume that}

\begin{description}
\item[(H1)]\label{ass00} \textit{$m(x) \in C^{3}(\overline{\Omega}) $, and is positive somewhere},
\end{description}
\textit{then for any $d>0$, $\eqref{eq0}$ has a unique positive steady-state $u$ for all large $\alpha$. Moreover, $u$ is globally asymptotically stable among all nonnegative, nonzero solutions.} In other words, the steady-state $u$ of $\eqref{eq0}$ determines the long-time behavior of all solutions of $\eqref{eq0}$. We shall always assume \textup{\textbf{(H1)}} throughout this paper. \\

\noindent From both mathematical and biological points of view, it seems important to understand the qualitative properties of $u$. In particular, it would be interesting to describe the shape of $u$. There has been considerable effort in this direction. Recently, it was proved in $\cite{CCL}$ that \emph{if the set of critical points of $m(x)$ has Lebesgue measure zero, then}
\begin{equation*}
\lim_{\alpha \rightarrow \infty} \int_\Omega u(x)dx = 0.
\end{equation*}

\noindent That is, the total population size tends to $0$ despite the fact that the species is tracking the resources more accurately. To understand the mechanism behind such phenomenon, again a better description of the shape of $u$ is desired. To this end, the following results were proved.

\begin{theorem}[Cantrell-Cosner-Lou] \label{CCL}
Suppose $m(x)>0$ in $\overline{\Omega}$. Let $u$ be the unique positive steady-state of $\eqref{eq0}$.
\begin{enumerate}
\item[(i)] If $\alpha > d/\min_{\overline{\Omega}} m$, then $                u(x) > \max_{\overline{\Omega}} m \cdot e^{{\alpha}(m(x) - \max_{\overline{\Omega}}m)}/{d}$ for every $x \in \overline{\Omega}$. In particular, $\max_{\overline{\Omega}}u > \max_{\overline{\Omega}}m$.

\item[(ii)]Suppose $\Omega = (-1,1)$, and $m(x)$ has finitely many critical points $\{x_i\}_{i=1}^{n}$, then $u \rightarrow 0 $ uniformly in compact subsets of $\Omega \setminus \{x_i\}_{i=1}^{n}$ as $\alpha \rightarrow \infty$.
\end{enumerate}
\end{theorem}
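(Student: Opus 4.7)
Plan. The key device is the substitution $v := u\,e^{-\alpha m/d}$, which absorbs the advection: the no-flux condition becomes the Neumann condition $\partial_\nu v=0$, and a direct computation puts the steady-state equation into the self-adjoint form
$$d\,\nabla\cdot\bigl(e^{\alpha m/d}\nabla v\bigr) = v\,e^{\alpha m/d}\bigl(v e^{\alpha m/d} - m\bigr)\quad \text{in }\Omega,$$
equivalently $d\Delta v+\alpha\nabla m\cdot\nabla v = v(v e^{\alpha m/d}-m)$. Since $u>0$ on $\overline{\Omega}$, so is $v$, and both parts of the theorem are driven by applying the maximum principle to this rewritten equation.

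For part (i), let $\underline{x}\in\overline{\Omega}$ realize $\min_{\overline{\Omega}} v$. The strong maximum principle, with Hopf's lemma against $\partial_\nu v=0$ to handle a possible boundary minimum, forces $v(\underline{x})e^{\alpha m(\underline{x})/d}\geq m(\underline{x})$, i.e.\ $v(\underline{x})\geq f(m(\underline{x}))$ with $f(s):=s\,e^{-\alpha s/d}$. Now $f'(s)=e^{-\alpha s/d}(1-\alpha s/d)$, so the hypothesis $\alpha>d/\min_{\overline{\Omega}} m$ makes $f$ strictly decreasing on $[\min_{\overline{\Omega}} m,M]$ with $M:=\max_{\overline{\Omega}} m$. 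Combined with $m(\underline{x})\leq M$, this yields $v\geq f(M)=M e^{-\alpha M/d}$ on $\overline{\Omega}$, which is exactly the stated lower bound on $u$. Strictness follows by applying the strong maximum principle to $v-M e^{-\alpha M/d}\geq 0$.

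For part (ii), the dual argument at a maximum point $x^*$ of $v$ yields $u(x^*)\leq m(x^*)\leq M$, and therefore the global upper bound $u(x)\leq v(x^*)\,e^{\alpha m(x)/d}\leq M\,e^{\alpha(m(x)-m(x^*))/d}$. Any compact subset of $\Omega\setminus\{x_i\}_{i=1}^{n}$ lies in a finite union of closed subintervals $I$ on which $m$ is strictly monotone, so I would treat each such $I$ separately. On $I$, integrating the one-dimensional identity $d(e^{\alpha m/d}v')'=u(u-m)$ and combining the pointwise bound $u\leq M$ with the Cantrell--Cosner--Lou decay $\int_\Omega u\,dx\to 0$ should control the oscillation of $v$ on $I$; paired with the sharp lower bound from (i), this pins $v|_I$ near $M e^{-\alpha M/d}$, upgrading the coarse bound to $u(x)\leq(1+o(1))\,M\,e^{\alpha(m(x)-M)/d}$ uniformly on $I$. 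Since $\max_I m<M$, the right-hand side decays to zero as $\alpha\to\infty$ uniformly in $I$, giving the claim.

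The principal obstacle is the final step of part (ii). The bound $u(x^*)\leq m(x^*)$ by itself does not localize $x^*$, and producing a uniform upper estimate for $v$ on each monotone subinterval that matches the lower bound from (i) requires genuine use of the one-dimensional structure and the finiteness of the critical set, together with the CCL integral estimate. Without this machinery the maximum-principle bound is off by an exponential factor in $\alpha$ that is not controlled by (i) alone; how delicately one must exploit the ODE structure on each monotone interval is the crux of the proof.
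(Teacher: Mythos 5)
First, note that the paper itself contains no proof of this statement: Theorem \ref{CCL} is quoted from the reference [CCL] (Cantrell--Cosner--Lou), so your argument can only be judged on its own terms. Part (i) of your proposal is essentially correct and is the standard argument: the substitution $v=ue^{-\alpha m/d}$ removes the drift, the minimum-point/Hopf argument gives $v(\underline{x})e^{\alpha m(\underline{x})/d}\geq m(\underline{x})$, and the strict monotonicity of $f(s)=se^{-\alpha s/d}$ on $[\min_{\overline{\Omega}}m,\max_{\overline{\Omega}}m]$ under $\alpha>d/\min_{\overline{\Omega}}m$ converts this into $v\geq f(\max_{\overline{\Omega}}m)$; the strictness step also needs $m$ nonconstant (if $m\equiv M$ then $u\equiv M$ and the strict inequality fails), but that is a minor bookkeeping point.

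Part (ii), however, contains a genuine gap, and not merely the one you flag: the refinement you propose is false whenever $m$ has local maxima of unequal heights. Your plan is to show that $v$ is pinned near $Me^{-\alpha M/d}$ on each monotone subinterval, i.e.\ $u(x)\leq(1+o(1))Me^{\alpha(m(x)-M)/d}$ there. But if $x_1$ is a positive local maximum with $m(x_1)=m_1<M$, then (by the lower-bound mechanism of Theorem \ref{main1}, which is also in [CCL]) $u$ is of order $m_1$ near $x_1$, and on the adjacent monotone stretch $u$ behaves like $c\,m_1e^{\alpha(m(x)-m_1)/d}$; this exceeds your claimed bound by the factor $e^{\alpha(M-m_1)/d}\to\infty$. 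Equivalently, $v$ is of order $f(m_1)=m_1e^{-\alpha m_1/d}\gg Me^{-\alpha M/d}$ there, so its oscillation over $\Omega$ is exponentially large and cannot be controlled down to the level of the part-(i) lower bound. For the same reason the maximum point $x^*$ of $v$ does not sit near the global maximum of $m$ (it sits near the lowest positive peak), so the crude bound $u(x)\leq Me^{\alpha(m(x)-m(x^*))/d}$ is useless on the region where $m(x)>m(x^*)$, e.g.\ on the monotone interval climbing to the global peak. Any correct proof must work peak-by-peak, assigning to each local maximum its own exponential scale $e^{\alpha(m(x)-m_j)/d}$ — this is exactly what the one-dimensional integration between consecutive critical points in [CCL] accomplishes, and what this paper's glued upper solutions $\min_i e^{\epsilon_i\alpha(m-cm_i)}$ (Lemmas \ref{m<0 upper lemma}, \ref{main2lemma} and Appendix B) encode in higher dimensions. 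Also, your appeal to $\int_\Omega u\,dx\to0$ imports another CCL result; citing it is legitimate, but it does not repair the oscillation claim above.
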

\vspace{0.2cm}
\noindent Based on these results, the following conjecture was proposed in $\cite{CCL}$ and Section 3.2 in $\cite{L}$.

\begin{conjecture}\label{conjecture}
$u$ concentrates precisely on the set of (positive) local maximum points of $m(x)$ as $\alpha \rightarrow \infty$.
\end{conjecture}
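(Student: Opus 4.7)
My plan is to proceed in two main stages, with a remark on the lower-bound interpretation.

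\emph{Setup.} The substitution $v := u e^{-\alpha m/d}$ rewrites \eqref{eq0} in self-adjoint form
$$d\,\nabla\cdot\bigl(e^{\alpha m/d}\nabla v\bigr) + v e^{\alpha m/d}\bigl(m - v e^{\alpha m/d}\bigr) = 0 \text{ in }\Omega, \qquad \partial_\nu v = 0 \text{ on }\partial\Omega,$$
which is the Euler--Lagrange equation of
$$J(v) = \int_\Omega\Bigl[\tfrac{d}{2}e^{\alpha m/d}|\nabla v|^2 - \tfrac{1}{2}m e^{\alpha m/d}v^2 + \tfrac{1}{3}e^{2\alpha m/d}v^3\Bigr]\,dx.$$
A maximum principle argument for $v$ at its positive maximum yields $u \le m$ at that point, and the Hopf lemma at the boundary should upgrade this to a uniform $L^\infty$ bound $u \le C$ independent of $\alpha$.

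\emph{Decay away from positive local maxima.} The core claim is: for every $x_0 \in \bar\Omega$ which is not a positive local maximum of $m$, $u(x_0;\alpha) \to 0$ as $\alpha \to \infty$, uniformly on compact sets. I plan to compare $u$ with super-solutions of the form $\bar u = A e^{\alpha\phi(m)/d}$, where $\phi$ is a concave nondecreasing function with $\phi' \in [0,1]$. This ansatz offers three advantages: (i) the leading-order term of the super-solution inequality is $(\alpha^2/d)\phi'(m)(\phi'(m)-1)|\nabla m|^2 \le 0$, hence favorable; (ii) the flux $d\nabla\bar u - \alpha\bar u\nabla m = \alpha(\phi'(m)-1)\bar u\,\nabla m$ can be made compatible with the no-flux boundary condition by arranging $\phi'(m)=1$ near $\partial\Omega$; (iii) $\bar u(x_0)$ decays exponentially in $\alpha$ whenever $\phi(m(x_0))$ is chosen strictly less than $\phi(m|_{\partial U\cap\Omega})$. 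Taking $U$ to be a component of $\{m < m(x_0)+\delta\}\cap\Omega$ containing $x_0$---which exists precisely because $x_0$ is not a local maximum---comparison would yield $u(x_0) \le Ce^{-c\alpha}$, and a covering argument extends this to uniform convergence on compact sets.

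The principal obstacle is simultaneously satisfying the super-solution inequality inside $U$, the flux condition on $\partial\Omega \cap \partial U$, and the dominance $\bar u \ge u$ on $\partial U\cap\Omega$. Since the favorable leading-order term vanishes wherever $\nabla m = 0$, the sub-leading terms $\alpha\phi''(m)|\nabla m|^2$ and $\alpha(\phi'(m)-1)\Delta m$ must carry the super-solution inequality near critical points of $m$, requiring a case analysis by local Morse structure; the saddle case is the most delicate because $\Delta m$ can change sign. As for the lower bound at each positive local maximum---if the conjecture is read to require it---one needs a sub-solution argument extending Theorem \ref{CCL}(i) from the global maximum to non-global local maxima, most naturally via an ansatz $\underline u = \epsilon\chi(x) e^{\alpha(m(x)-m(x_0))/d}$ with a cutoff $\chi$ supported around $x_0$. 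Controlling the cutoff's contribution to the sub-solution inequality is the hardest step I foresee, and may require a localized WKB analysis together with the parabolic stability of the steady state.
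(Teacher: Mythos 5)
Your super-solution ansatz $\overline u=Ae^{\alpha\phi(m)/d}$ with $\phi$ concave, nondecreasing, $\phi'\in[0,1]$ is in fact exactly the family the paper uses for the upper-bound half (its upper solutions in Lemmas \ref{casea}, \ref{m<0 upper lemma} and \ref{main2lemma} are minima of exponentials $e^{\epsilon_i\alpha(m-cm_i)}$, i.e.\ $e^{\alpha\phi(m)}$ with piecewise-linear concave $\phi$). But the two steps you yourself flag as obstacles are genuine gaps in your plan, and the paper resolves them differently. First, you compare on a local subdomain $U$, which forces you to prove both $\overline u\ge u$ on $\partial U\cap\Omega$ and a comparison principle for a non-monotone logistic operator with mixed boundary data; neither is supplied. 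The paper sidesteps this entirely by building one \emph{global} upper solution on $\overline\Omega$ (Definition \ref{uppersol}) and deducing $u\le\overline u$ from Sattinger's theorem combined with uniqueness and global asymptotic stability of $u$ (Lemma \ref{upperabove}); no interior-boundary dominance is ever needed. Second, near critical points where $\nabla m=0$ the super-solution inequality must be carried by $\alpha(\phi'-1)\Delta m$, and this has the right sign only under the standing hypothesis \textbf{(H4)} ($\Delta m>0$ at saddles and local minima); you notice the saddle difficulty but propose no hypothesis or mechanism, and without one your ansatz simply cannot close there (the conjecture is only established in the paper under \textbf{(H2)}--\textbf{(H4)}). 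Relatedly, taking $\phi'=1$ near $\partial\Omega$ kills the favorable $\alpha^2$-term and leaves the requirement $\overline u\ge m$, which is incompatible with exponential smallness wherever $m>0$ near the boundary; the paper instead uses \textbf{(H2)}, $\partial m/\partial\nu\le 0$, so that slopes $\phi'=\epsilon<1$ already satisfy the flux inequality. Your claimed uniform bound $u\le C$ via Hopf is also not automatic (note $\max u>\max m$ by Theorem \ref{CCL}(i); the paper's bound in Lemma \ref{main3lem1} needs \textbf{(H4)} and equal nondegenerate peaks).

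The more serious omission is the lower-bound half of the conjecture: that $u$ actually concentrates \emph{on} each positive local maximum. You offer only an ansatz $\underline u=\epsilon\chi e^{\alpha(m-m(x_0))/d}$ and concede that controlling the cutoff terms (of size $\alpha|\nabla m\cdot\nabla\chi|$, $|\Delta\chi|$ against $\chi m$) is the hardest step; as written this half is not proved. The paper's Theorem \ref{main1} does it by a different and complete argument requiring no sub-solution at all: since $u$ is the principal eigenfunction, with eigenvalue $0$, of its own linearized problem, the transformation $\phi=e^{\alpha m}\psi$ gives a self-adjoint Rayleigh quotient, and a test function supported in $\{m>m(x_0)-\epsilon\}\cap B$ forces $0\le C(\epsilon)e^{-\epsilon\alpha/3}+\sup_{B}u-m(x_0)+\epsilon$, whence $\liminf_{\alpha\to\infty}\sup_B u\ge m(x_0)$. (A sub-solution construction at each $x_0\in\mathfrak M$ along your lines is indeed possible --- it is the alternative proof of Bezugly--Lou cited in the concluding remarks --- but your proposal does not carry it out, and the conclusion $u\ge\underline u$ would in any case again need the Sattinger/uniqueness/stability argument rather than a naive local comparison.)
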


\begin{remark}
We have modified the concentration set to be the set of positive local maximum points instead of local maximum points stated in $\textup{\cite{CCL}}$, since we are considering a more general situation where $m(x)$ can change sign on the set of its local maximum points.
\end{remark}

\noindent In this paper we shall establish Conjecture $\ref{conjecture}$ under mild conditions on $m(x)$.\\

\noindent Let $\mathfrak{M}$ be the set of all positive strict local maximum points of $m(x)$ (i.e. those lying in $\{ x \in \Omega: m(x)>0\}$).

\begin{theorem} \label{main1}
Assume that $u$ is the unique positive steady-state of \eqref{eq0}. If $x_0\in \mathfrak{M}$, then for any ball $B$ centered at $x_0$,
\begin{equation}\label{main1.1}
\liminf_{\alpha \rightarrow \infty}\sup_{B}u \geq m(x_0).
\end{equation}

\end{theorem}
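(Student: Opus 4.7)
The plan is to construct, for each large $\alpha$, a nonnegative subsolution $\underline u$ of the stationary version of \eqref{eq0} that is supported in an arbitrarily small neighborhood of $x_0$ and whose supremum converges to $m(x_0)$; comparing with $u$ will then give the desired lower bound. Since $x_0$ is a positive strict local maximum of $m$, for every sufficiently small $\eta>0$, the connected component $\omega_\eta$ of $\{x\in\Omega:m(x)>m(x_0)-\eta\}$ containing $x_0$ is relatively compact, lies in $B\cap\{m>0\}$, and shrinks to $\{x_0\}$ as $\eta\to 0^+$. Fix such an $\eta$; all subsequent constructions live on $\omega_\eta$.

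First I would analyze the Dirichlet principal eigenvalue problem
\begin{equation*}
d\Delta\phi-\alpha\nabla\cdot(\phi\nabla m)+m\phi=\lambda\phi\ \text{ in }\omega_\eta,\qquad\phi=0\ \text{ on }\partial\omega_\eta.
\end{equation*}
The substitution $\phi=e^{\alpha m/d}\psi$ converts this into a self-adjoint problem with Rayleigh characterization
\begin{equation*}
\lambda_1(\alpha)=\sup_{\psi\in H_0^1(\omega_\eta)\setminus\{0\}}\frac{\int_{\omega_\eta}(m\psi^2-d|\nabla\psi|^2)\,e^{\alpha m/d}\,dx}{\int_{\omega_\eta}\psi^2\,e^{\alpha m/d}\,dx}.
\end{equation*}
I would prove $\lambda_1(\alpha)\to m(x_0)$ as $\alpha\to\infty$: the upper bound $\lambda_1(\alpha)\le m(x_0)$ follows by evaluating the transformed equation at an interior maximum of the positive eigenfunction, while the matching lower bound is obtained by testing the Rayleigh quotient with a smooth cutoff $\psi_\rho$ equal to $1$ on $B_{\rho/2}(x_0)$ and supported in $B_\rho(x_0)\subset\omega_\eta$. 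Since $x_0$ is the strict maximum of $m$ on $\overline{B_\rho(x_0)}$, the set $\mathrm{supp}(\nabla\psi_\rho)$ lies where $m\le M_\rho<m(x_0)$, so Laplace's method renders $\int|\nabla\psi_\rho|^2e^{\alpha m/d}$ exponentially smaller than $\int\psi_\rho^2e^{\alpha m/d}$, forcing the quotient to $m(x_0)$.

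Letting $\phi_\alpha>0$ denote the principal eigenfunction and $M_\alpha:=\sup_{\omega_\eta}\phi_\alpha$, I would set $\underline u:=(\lambda_1(\alpha)/M_\alpha)\phi_\alpha$ on $\omega_\eta$, extended by zero to $\Omega\setminus\omega_\eta$. A direct computation using the eigenvalue equation yields, inside $\omega_\eta$,
\begin{equation*}
d\Delta\underline u-\alpha\nabla\cdot(\underline u\nabla m)+\underline u(m-\underline u)=\frac{\lambda_1}{M_\alpha}\phi_\alpha\Bigl(\lambda_1-\frac{\lambda_1}{M_\alpha}\phi_\alpha\Bigr)\ge 0,
\end{equation*}
while the Hopf Lemma ($\partial_\nu\phi_\alpha<0$ on $\partial\omega_\eta$) contributes a nonnegative surface measure to the distributional divergence at $\partial\omega_\eta$. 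Hence $\underline u$ is a weak subsolution of the stationary equation on $\Omega$, and the no-flux boundary condition on $\partial\Omega$ holds trivially since $\overline{\omega_\eta}\subset\Omega$.

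Finally, letting $U(\cdot,t)$ solve \eqref{eq0} with initial datum $\underline u$, standard parabolic comparison with the stationary subsolution gives $U(\cdot,t)\ge\underline u$ for all $t\ge 0$; by the global asymptotic stability of $u$ recalled after hypothesis \textbf{(H1)}, $U(\cdot,t)\to u$ as $t\to\infty$, so $u\ge\underline u$ on $\Omega$. We then obtain
\begin{equation*}
\sup_B u\ \ge\ \sup_{\omega_\eta}\underline u\ =\ \lambda_1(\alpha)\ \xrightarrow{\alpha\to\infty}\ m(x_0),
\end{equation*}
which is precisely \eqref{main1.1}. The main technical obstacle will be the asymptotic $\lambda_1(\alpha)\to m(x_0)$; the strict local maximum hypothesis enters exactly here, to furnish the Laplace-type exponential dominance used above, and no nondegeneracy assumption on $D^2 m(x_0)$ is required.
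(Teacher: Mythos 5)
Your proposal is correct, but it follows a genuinely different route from the paper's. The paper exploits the fact that $u$ is itself the principal eigenfunction, with principal eigenvalue $0$, of the Neumann problem obtained by linearizing \eqref{eq1} at $u$ (potential $m-u$ on all of $\Omega$); after the transformation $\phi=e^{\alpha m}\psi$ this zero eigenvalue has a variational characterization in which the unknown $u$ appears, and plugging in exactly the kind of cutoff you use (supported in a small superlevel set of $m$ around $x_0$, where the gradient term is exponentially negligible by the strict-maximum property) yields $0\le o(1)+\sup_{U_3}u-m(x_0)+\epsilon$ directly --- no auxiliary eigenvalue problem, no subsolution, no comparison argument. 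You instead build a compactly supported lower solution from the Dirichlet principal eigenfunction of the linear operator with potential $m$ on a neighborhood of $x_0$, prove $\lambda_1(\alpha)\to m(x_0)$ by the same Laplace-type estimate, and then invoke parabolic comparison plus the global stability of $u$ to conclude $u\ge\underline u$ and hence $\sup_B u\ge\lambda_1(\alpha)$. This is essentially the alternative ``lower solution at each $x_0\in\mathfrak{M}$'' approach credited to Bezugly and Lou in the concluding remarks; it costs more machinery (Hopf lemma, zero extension of subsolutions, the parabolic dynamics) but buys more, namely an explicit pointwise barrier $u\ge\underline u$ near $x_0$ rather than only a bound on $\sup_B u$, and like the paper's argument it needs no nondegeneracy of $D^2m(x_0)$. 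One small repair: the zero-extension/Hopf step requires $\partial\omega_\eta$ to have some regularity (an interior ball condition at least), which a level set $\{m=m(x_0)-\eta\}$ need not enjoy; since your argument only uses that $m$ attains its maximum over the subdomain at $x_0$ and that the subdomain contains a small ball $B_\rho(x_0)$, you can simply run the whole construction on a small ball centered at $x_0$ (or any smooth subdomain of $\omega_\eta$ containing $x_0$), which removes the issue without altering any other step.
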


\vspace{0.2cm}
\noindent In other words, $u$ concentrates at each point of $\mathfrak{M}$. The proof of Theorem $\ref{main1}$ is based on the observation that $u$ solves a corresponding eigenvalue problem and is given in Section 2.\\

\noindent To prove that $u$ concentrates precisely on $\mathfrak{M}$, we impose the following assumptions on $m(x)$.

\begin{description}
\item[(H2)] $ \frac{\partial m}{\partial \nu} \leq 0 $ on $ \partial \Omega.$
\item[(H3)] $m(x)$ has finitely many local maximum points in $\overline{\Omega}$, all being strict local maxima located in the interior of $\Omega$.
\item[(H4)] $\Delta m (x_0) > 0$ if $x_0\in \overline{\Omega}$ is a local minimum or a saddle point of $m(x)$.
\end{description}

\begin{theorem} \label{main2}
Assume $m(x)$ satisfies \textup{\textbf{(H2)}}, \textup{\textbf{(H3)}} and \textup{\textbf{(H4)}}, then for any compact subset $K$ of $\overline{\Omega} \setminus \mathfrak{M}$, there exists $\gamma= \gamma(K) > 0$, such that
\begin{equation*}
0 < u(x) \leq e^{-\gamma \alpha}, \quad \text{for all }x \in K.
\end{equation*}
\noindent In particular, $u \rightarrow 0$ uniformly and exponentially in $K$, as $\alpha \rightarrow \infty$.
\end{theorem}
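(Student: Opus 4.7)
The change of variable $v := u\,e^{-\alpha m/d}$ transforms \eqref{eq0} into
\begin{equation*}
d\Delta v + \alpha\,\nabla m\cdot \nabla v + (m-u)\,v = 0 \ \text{in}\ \Omega, \qquad \partial_\nu v = 0 \ \text{on}\ \partial\Omega,
\end{equation*}
so that the weighted boundary condition is replaced by a standard Neumann one. Applying the maximum principle to $v$, and invoking (H2) together with Hopf's lemma to rule out a boundary extremum, one finds that at any interior maximum $x^*$ the equation yields $u(x^*)\leq m(x^*)$, whence $m(x^*)>0$ since $u>0$; then $v(x^*)\leq m(x^*)\,e^{-\alpha m(x^*)/d}\leq d/(e\alpha)$. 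This gives the preliminary estimate
\begin{equation*}
u(x) \leq \frac{d}{e\alpha}\,e^{\alpha m(x)/d} \quad\text{for every } x\in\overline\Omega, \tag{$\ast$}
\end{equation*}
which already proves the theorem on compact subsets of $\{m<0\}$.

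For the more delicate case where $K$ meets $\{m\geq 0\}$, I would refine $(\ast)$ by comparison against supersolutions of the form $\bar u := A\,e^{\alpha\rho/d}$. A direct computation shows that $\bar u$ is a supersolution of \eqref{eq0} as soon as
\begin{equation*}
\frac{\alpha^2}{d}\,\nabla\rho\cdot(\nabla\rho-\nabla m) + \alpha\,\Delta(\rho-m) + (m-\bar u) \leq 0,
\end{equation*}
so the leading-order requirement is $\nabla\rho\cdot(\nabla\rho-\nabla m)\leq 0$, reinforced by $\Delta(\rho-m)\leq 0$ where the leading term degenerates. The geometric picture behind this ansatz is that $v$ is approximately constant along ascending trajectories of $\nabla m$: for $x$ in the basin of $x_0\in\mathfrak{M}$ one expects $u(x)\approx m(x_0)\,e^{\alpha[m(x)-m(x_0)]/d}$, which is exponentially small by the strict-max property in (H3). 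Making this rigorous amounts to choosing $\rho$ with $\rho<m$ strictly on $K$ and calibrating $A$, via $(\ast)$ and the a priori boundedness of $u$ near $\mathfrak{M}$, so that $\bar u\geq u$ on the boundary of each piece. Basins whose terminal local maximum satisfies $m(x_0)\leq 0$ lie entirely in $\{m\leq 0\}$, so $(\ast)$ applies directly there.

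Hypothesis (H3) reduces the global problem to finitely many local pieces, and (H4) is essential near each non-maximum critical point of $m$: there the leading-order eikonal condition is vacuous, and the sign $\Delta m>0$ is precisely what allows the subleading inequality $\Delta(\rho-m)\leq 0$ to be satisfied. The main obstacle is therefore the \emph{explicit} construction of $\rho$ fulfilling both the leading eikonal inequality and the subleading condition in each basin, together with the required strict separation from $m$ on $K$; once these local barriers are patched and the corresponding comparison principle is applied, one obtains a single exponential bound $u\leq e^{-\gamma\alpha}$ on $K$ with $\gamma=\gamma(K)>0$.
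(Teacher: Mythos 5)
Your overall strategy is the same one the paper uses: exponential barriers of the form $Ae^{\alpha\rho/d}$, reduced to the first--order inequality $\nabla\rho\cdot(\nabla\rho-\nabla m)\le 0$ plus a sign condition on $\Delta(\rho-m)$ supplied by \textbf{(H4)} at the non-maximum critical points, and your preliminary estimate $(\ast)$ is a reasonable substitute for the paper's piece $e^{\alpha(m-k)}$ on $\{m\le 0\}$ (though note that with $v=ue^{-\alpha m/d}$ the flux condition becomes exactly $\partial_\nu v=0$, and Hopf's lemma needs the sign of the zeroth-order coefficient handled, e.g.\ by observing that if $u>m$ at a boundary maximum then the coefficient is negative nearby). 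The problem is that the proposal stops exactly where the content of the theorem lies: you yourself flag the construction of $\rho$ as ``the main obstacle'' and do not carry it out. The paper's proof \emph{is} that construction: it takes $\rho=\epsilon_i\,(m-c\,m_i)$ on regions attached to each peak value $m_i$, $\rho=m-k$ on the nonpositive part, glues the pieces by taking minima, and verifies that the constants can be ordered (e.g.\ $\epsilon_1<\delta_0/(cm_1+\delta_0)$, $0<k<\epsilon_1cm_1$, $\epsilon_{i+1}<\epsilon_i(c^2m_i-cm_i)/(c^2m_i-cm_{i+1})$) so that the patched function is continuous, the non-smooth interfaces sit compactly inside the sets where each piece is a genuine supersolution (condition \eqref{c} of Definition \ref{uppersol}), and $L\overline u\le 0$ holds both where $|\nabla m|$ is bounded below and, via \textbf{(H4)}, near saddles and minima. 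Also, what you need on $K$ is $\sup_K\rho<0$ (which the choice $\rho=\epsilon_i(m-cm_i)$ gives because the maxima are strict and $K$ avoids $\mathfrak{M}$), not ``$\rho<m$ on $K$''; the latter gives no smallness where $m\ge 0$.

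There is also a step that would fail as written: calibrating $A$ so that $\overline u\ge u$ on the boundary of each basin ``via the a priori boundedness of $u$ near $\mathfrak{M}$''. Under \textbf{(H2)}--\textbf{(H4)} alone no uniform-in-$\alpha$ bound for $u$ near $\mathfrak{M}$ is available; the paper obtains $u\le Ce^{\alpha(m-m_1)}$ (hence boundedness) only under the extra nondegeneracy and equal-height hypotheses of Theorem \ref{main3}, in Lemma \ref{main3lem1}. Moreover, a pointwise comparison between a positive supersolution and the positive solution of \eqref{eq1} needs justification, since there is no initial ordering to start from. The paper sidesteps both difficulties: its barrier is exponentially \emph{large} near the peaks (so nothing about $u$ there is needed), and the inequality $\overline u\ge u$ is obtained globally rather than by boundary calibration, through the weak upper solution notion, Sattinger's theorem and the uniqueness/global stability of $u$ (Lemma \ref{upper implies weak upper} and Lemma \ref{upperabove}): between the lower solution $0$ and the upper solution $\overline u$ there is a stable-from-above solution, which must coincide with $u$. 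To complete your argument you should either prove the missing uniform bound on $u$ near $\mathfrak{M}$ (not possible from $(\ast)$, which blows up where $m>0$), or do as the paper does: choose $\rho>0$ near each peak (e.g.\ $\rho=\epsilon_i(m-cm_i)$ with $c<1$) and replace the local comparison by the global Sattinger-plus-uniqueness argument.
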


\noindent Theorems $\ref{main1}$ and $\ref{main2}$ together guarantee that $u$ concentrates precisely on  $\mathfrak{M}$, the set of positive local maximum points of $m(x)$, thereby Conjecture $\ref{conjecture}$ is established. Theorem $\ref{main2}$ is proved in Section $\ref{sec:2}$ by the construction of an upper solution closely related to the shape of $m(x)$.\\

\noindent The question of determining the profile of $u$ is, however, far more challenging. We only have the following result by a very interesting method introduced in $\cite{CL}$ for the special case when $m(x)$ is constant on the set of local maximum points of $m(x)$.

\begin{theorem}\label{main3}
If $m(x)$ satisfies \textup{\textbf{(H2)}}, \textup{\textbf{(H3)}} and \textup{\textbf{(H4)}} and moreover,

\begin{equation*}
\textup{det}\, D^2 m(x_0) \neq 0\text{  for all } x_0 \in \mathfrak{M},
\end{equation*}
with $m(x_0) \equiv m_1>0 $  for all local maximum points $ x_0 \in \Omega$, then
\begin{equation} \label{eqq001}
\lim_{\alpha \rightarrow \infty} \parallel u(x) - 2^{N/2} m_1 e^{{\alpha}[ m(x)- m_1]/{d}} \parallel_{L^\infty (\Omega)} =0.
\end{equation}

\end{theorem}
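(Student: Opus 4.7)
The plan is to combine an inner asymptotic expansion near each peak $x_0 \in \mathfrak{M}$ with the exponential decay of Theorem~\ref{main2} outside a neighborhood of $\mathfrak{M}$, and to pin down the multiplicative constant $2^{N/2}$ through an integral balance. The starting observation is that the proposed profile $\phi_\alpha(x):=2^{N/2}m_1\, e^{\alpha(m(x)-m_1)/d}$ satisfies $d\nabla\phi_\alpha-\alpha\phi_\alpha\nabla m\equiv 0$ identically, so under the substitution $u=\phi_\alpha w$ the strong advective drift in \eqref{eq0} is annihilated: $w$ solves
\begin{equation*}
  d\Delta w + \alpha\nabla m\cdot\nabla w + w\bigl(m-\phi_\alpha w\bigr)=0 \text{ in }\Omega,\qquad \partial w/\partial\nu = 0 \text{ on }\partial\Omega,
\end{equation*}
and \eqref{eqq001} reduces to showing $\phi_\alpha(w-1)\to 0$ uniformly.

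The core step is a blow-up at each $x_0\in\mathfrak{M}$. Setting $y=\sqrt{\alpha}(x-x_0)$ and $U_\alpha(y)=u(x_0+y/\sqrt{\alpha})$ and using $\nabla m(x_0+y/\sqrt{\alpha})=\alpha^{-1/2}D^2m(x_0)y+O(\alpha^{-1})$ (since $\nabla m(x_0)=0$), the rescaled steady-state equation, divided by $\alpha$, becomes
\begin{equation*}
 d\Delta_y U_\alpha - \bigl(D^2 m(x_0)y\bigr)\cdot\nabla_y U_\alpha - \Delta m(x_0)\,U_\alpha + \tfrac{1}{\alpha}U_\alpha\bigl(m-U_\alpha\bigr)=0.
\end{equation*}
Theorem~\ref{main1} furnishes a nontrivial lower bound at $y=0$, while Theorem~\ref{main2} makes $U_\alpha$ exponentially small once $|x-x_0|\ge\delta$; standard elliptic $C^{2,\beta}_{\mathrm{loc}}$ estimates then produce a locally uniform subsequential limit $U$ on $\mathbb{R}^N$ solving the frozen-coefficient linear equation. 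This is the stationary Fokker--Planck operator of an Ornstein--Uhlenbeck process whose drift matrix $-D^2m(x_0)/d$ is positive definite (non-degeneracy together with $x_0$ being a local max), so its only bounded positive solutions are $U(y)=C\exp\!\bigl(\tfrac{1}{2d}\,y^T D^2 m(x_0)\,y\bigr)$ for a single constant $C>0$.

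The constant is fixed by localized integral identities. Multiplying the steady state by a cutoff $\chi$ equal to $1$ on $B_\delta(x_0)$ and vanishing outside $B_{2\delta}(x_0)$, integrating by parts, and using the no-flux boundary condition gives
\begin{equation*}
 \int_\Omega u(m-u)\chi\,dx = \int_\Omega \bigl(d\nabla u-\alpha u\nabla m\bigr)\cdot\nabla\chi\,dx,
\end{equation*}
whose right-hand side is exponentially small by Theorem~\ref{main2} (applied on the annulus where $\nabla\chi\ne 0$, which is compactly disjoint from $\mathfrak{M}$). Rescaling the left-hand side to $y$-coordinates (Jacobian $\alpha^{-N/2}$) yields in the limit $m_1\int_{\mathbb{R}^N}U\,dy=\int_{\mathbb{R}^N}U^2\,dy$ at the given peak. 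Writing $D^2 m(x_0)=-B$ with $B>0$, the Gaussian integrals equal $C(2\pi d)^{N/2}/\sqrt{\det B}$ and $C^2(\pi d)^{N/2}/\sqrt{\det B}$, whence $C=2^{N/2}m_1$; the value is the same at every peak thanks to the assumption $m(x_0)\equiv m_1$ on $\mathfrak{M}$.

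The main obstacle is upgrading this locally uniform blow-up convergence to the global $L^\infty(\Omega)$ statement \eqref{eqq001}. Outside a fixed neighborhood of $\mathfrak{M}$, both $u$ and $\phi_\alpha$ are already exponentially small, so their difference is trivially $o(1)$ there. The delicate region is the intermediate annulus $R/\sqrt{\alpha}\le|x-x_0|\le\delta$ around each peak, where soft compactness provides no quantitative control as $R\to\infty$. To close this gap I would follow the method of \cite{CL} alluded to in the introduction: construct super- and subsolutions of the $w$-equation of the form $1\pm\varepsilon$ modified by a correction concentrated near $\mathfrak{M}$ that absorbs the residual $w(m-\phi_\alpha w)$ on the Gaussian scale; the hypothesis $\det D^2 m(x_0)\neq 0$ is precisely what guarantees that this correction is integrable in the blow-up variable and decays uniformly in $\alpha$. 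Comparison then sandwiches $u$ between $(1\pm\varepsilon)\phi_\alpha+o(1)$, and letting $\varepsilon\to 0$ yields \eqref{eqq001}.
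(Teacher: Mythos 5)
Your overall architecture matches the paper's: blow-up at scale $1/\sqrt{\alpha}$ at each peak, identification of the limit as a Gaussian via a Liouville-type classification, an integral identity to pin the constant $2^{N/2}m_1$, and Theorem \ref{main2} to dispose of the region off $\mathfrak{M}$. Your localized cutoff identity is in fact a reasonable peak-by-peak variant of the paper's global integration of \eqref{eq1} (which the paper supplements with a separate $\liminf$ lemma to decouple the peaks). However, there is a genuine gap: essentially every quantitative step of your outline silently relies on a uniform Gaussian-type upper bound $u(x)\le C e^{\alpha(m(x)-m_1)/d}$ valid on all of $\Omega$ for all large $\alpha$, and you never prove it. This is precisely the paper's Lemma \ref{main3lem1}, obtained by a maximum-principle argument at the maximum point of $e^{(-\alpha+\epsilon)m}u$ in which \textbf{(H4)} and a careful choice of $\epsilon$ rule out the maximum drifting to saddle or minimum points; nothing in Theorems \ref{main1} or \ref{main2} substitutes for it. Concretely: (i) without it you have no $\alpha$-uniform $L^\infty$ bound on $u$ near the peaks, so the elliptic estimates producing your blow-up limit are not justified (recall $\max_{\overline\Omega}u>\max_{\overline\Omega}m$, so boundedness is not free); (ii) your assertion that the frozen Ornstein--Uhlenbeck/Fokker--Planck equation has only Gaussian bounded positive solutions is stated without proof --- the paper's Appendix C argument, after the substitution $U=e^{\frac{1}{2d}y^TD^2m(x_0)y}\Phi$, needs the inherited decay $U\le Ce^{\frac{1}{3d}y^TD^2m(x_0)y}$ to verify the hypothesis $\int_{B_R}(\sigma\Phi)^2\le CR^2$; with boundedness alone one only gets a bound of order $R^N$, which is insufficient for $N\ge 3$; (iii) passing to the limit in your rescaled identity $m_1\int U=\int U^2$ requires a dominating Gaussian envelope on the intermediate annulus $R\le|y|\le\delta\sqrt{\alpha}$, exactly what the missing bound provides, and the same bound is what guarantees the supremum near $x_0$ is attained at scale $O(1/\sqrt{\alpha})$ so that the blow-up limit is nontrivial; (iv) the final passage from compact-set convergence to the global statement \eqref{eqq001} is again where the paper invokes this bound.

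Your proposed remedy for (iv) --- super/subsolutions $1\pm\varepsilon$ for $w=u/\phi_\alpha$ ``following the method of \cite{CL}'' --- is not carried out and cannot be accepted as is: a global two-sided multiplicative comparison $(1-\varepsilon)\phi_\alpha\le u\le(1+\varepsilon)\phi_\alpha$ is a sharpened form of the very estimate that is missing, and away from $\mathfrak{M}$ the ratio $w$ need not be close to $1$ at all (only the difference $u-\phi_\alpha$ is small there), so ordered barriers of the form ``constant plus localized correction'' for $w$ on all of $\Omega$ would require substantial additional construction. To repair the proof you should first establish Lemma \ref{main3lem1} (or an equivalent uniform bound); once that is in hand, your blow-up, Liouville, and localized integral-identity steps can all be made rigorous along the lines you indicate.
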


\begin{remark}
The factor $2^{N/2}m_1$, though mysterious at first glance, is actually the consequence of the profile of $u$ at each of its "weights", which is like a Gaussian distribution $e^{{\alpha}[(x-x_0)^T D^2 m(x_0) (x-x_0)]/{2d}}$, as well as the integral constraint $\int_{B(x_0)} u^2 - um\,dx = O(e^{-\gamma\alpha})$ for each $x_0 \in \mathfrak{M}$.
\end{remark}

\noindent As in $\cite{CCL,CCL2}$, our resolution of Conjecture $\ref{conjecture}$ has implications for the following competition system.
\begin{equation} \label{eq2}
\left\{
\begin{array}{ll}
U_t = \nabla\cdot(d_1\nabla U - \alpha U \nabla m) + U ( m - U - V)  & \phantom{1} in\phantom{1}\Omega \times (0,\infty),\\
V_t = d_2\Delta V + V( m - U - V) & \phantom{1} in\phantom{1} \Omega\times (0,\infty) ,\\
d_1 \frac{\partial U}{\partial \nu} - \phantom{1} \alpha U \frac{\partial m}{\partial \nu} = \frac{\partial V}{\partial \nu} = 0 & \phantom{1} on\phantom{1} \partial \Omega\times (0,\infty).
\end{array}\right.
\end{equation}
\noindent This system was introduced to model the competition of two species whose population densities are denoted by $U(x,t)$ and $V(x,t)$ respectively. The two species have identical local growth rate $m(x)$ and competition abilities, but different dispersal strategies: the species with density $V$ disperses randomly, whereas the other species $U$ disperses, in addition to random diffusion, by a directed movement towards more favorable locations, i.e. where $m(x)$ is large. The goal of this model is to understand how different dispersal strategies affect the outcome of the competition in a heterogeneous environment.\\

\noindent When $\alpha = 0$, it is well-known $\cite{DHMP}$ that \emph{if $d_1>d_2$, then $\eqref{eq2}$ has no coexistence steady-states, and solution $(U_\alpha,V_\alpha)$ of $\eqref{eq2}$ always converges to $(0,\theta_{d_2})$ as $t\to\infty$, where $\theta_{d_2}$ is the unique positive solution to}

\begin{equation} \label{eq4}
\left\{
\begin{array}{rl}
d_2\Delta \theta + \theta ( m - \theta) &= 0 \qquad  in\phantom{1}\Omega,\\
\frac{\partial \theta}{\partial \nu} & = 0 \qquad on\phantom{1} \partial \Omega.
\end{array}\right.
\end{equation}
\noindent However, for any $d_1,d_2>0$, the existence of the positive steady-states $U_\alpha,V_\alpha>0$ of $\eqref{eq2}$ was established in $\cite{CCL,CL}$ for all large values of $\alpha$. Moreover, they proved that \emph{at least one of the co-existence steady-sates is stable!} Some qualitative properties of these co-existence steady-states were also obtained under extra hypotheses on $m(x)$. 

\begin{theorem}[Chen-Lou] \label{CL}
Suppose that $\int_\Omega m(x)dx > 0 $ and all critical points of m are non-degenerate \textup{(}$\textup{det}D^2m(x_0)\neq 0$\textup{)}. Then for any positive steady-state $(U_\alpha,V_\alpha)$ of $\eqref{eq2}$,
\begin{equation*}
\liminf_{\alpha \rightarrow \infty} \max_{\bar{\Omega}} U_\alpha \geq \max_{\mathfrak{M}} [m-\theta_{d_2}] > 0,
\end{equation*}
where $\theta_{d_2}$ is the unique positive solution to $\eqref{eq4}$.\\

\noindent Assume further that $m(x)$ satisfies \textup{\textbf{(H2)}} and that $m(x)$ has exactly one critical point $x_0$ which is a non-degenerate local maximum in the interior of $\Omega$, then for any positive steady-state $(U_\alpha,V_\alpha)$ of $(\ref{eq2})$,
\begin{equation*}
\forall \beta \in (0,1): \quad \lim_{\alpha \rightarrow \infty} \parallel V_\alpha - \theta_{d_2}\parallel_{C^{1+\beta} (\bar{\Omega})} =0, \text{ and}
\end{equation*}
\begin{equation*}
\lim_{\alpha \rightarrow \infty} \parallel U_\alpha(x)e^{{\alpha}[\max_{\bar{\Omega}} m- m(x)]/{d_1}} - 2^{N/2}[m(x_0) - \theta_{d_2} (x_0)]  \parallel_{L^\infty (\Omega)} =0.
\end{equation*}

\end{theorem}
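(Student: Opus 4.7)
The plan is to bootstrap from the single-species theory (Theorems~\ref{main1} and~\ref{main2}) applied to the $U$-equation viewed as a logistic equation with effective growth rate $m - V_\alpha$. In the second setting of the theorem, $m$ has a unique critical point $x_0$, a non-degenerate interior local maximum, so $\mathfrak{M} = \{x_0\}$ and the hypotheses \textup{\textbf{(H2)}}--\textup{\textbf{(H4)}} all hold (with \textup{\textbf{(H4)}} vacuous). First I would obtain uniform a priori bounds: $\|V_\alpha\|_{L^\infty} \leq \max m$ by maximum principle comparison, and $\|U_\alpha\|_{L^2}$, $\|U_\alpha\|_{L^1}$ controlled by each other by integrating the $U$-equation over $\Omega$ and using the no-flux boundary condition, yielding the crucial identity
\[
\int_\Omega U_\alpha^2\,dx \;=\; \int_\Omega U_\alpha (m - V_\alpha)\,dx.
\]

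Next I would prove $V_\alpha \to \theta_{d_2}$ in $C^{1+\beta}(\overline{\Omega})$. The key input is an adaptation of Theorem~\ref{main2}: a supersolution for $U_\alpha$ built from $e^{\alpha(m - \max m)/d_1}$-type profiles still works because the extra term $-U_\alpha V_\alpha$ in the $U$-equation is non-positive and only assists the construction. This gives exponential decay of $U_\alpha$ on compact subsets of $\overline{\Omega}\setminus\{x_0\}$, so $\|U_\alpha\|_{L^p(\Omega)} \to 0$ for every finite $p$. The $V$-equation then approaches \eqref{eq4} with forcing $U_\alpha V_\alpha \to 0$ in every $L^p$; standard $W^{2,p}$ and Schauder estimates, together with a uniform positive lower bound for $\|V_\alpha\|_\infty$ (from the logistic structure, to exclude the zero limit), give $V_\alpha \to \theta_{d_2}$ along any subsequence, hence along the full sequence. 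The eigenvalue argument underlying Theorem~\ref{main1}, applied to $U_\alpha$ with effective growth $m - V_\alpha$, then yields $\liminf_{\alpha \to \infty} \sup_{B} U_\alpha \geq m(x_0) - \theta_{d_2}(x_0)$, establishing the first assertion.

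For the sharp profile of $U_\alpha$, set $W_\alpha(x) := U_\alpha(x)\, e^{\alpha[\max m - m(x)]/d_1}$. The transformed equation has exponentially small source away from $x_0$, and pairing the upper bound from Theorem~\ref{main2} with a matched lower bound built from the same Gaussian ansatz shows that $W_\alpha$ is bounded above and below uniformly on a fixed ball around $x_0$. Passing to a subsequence, $W_\alpha(x_0) \to c^*$ for some $c^* > 0$; a blow-up analysis on the natural scale $|x-x_0| \sim \sqrt{d_1/\alpha}$, on which the Hessian approximation becomes exact, upgrades this to locally uniform convergence of $W_\alpha$ to $c^*$ near $x_0$. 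The identity from Step~1, evaluated via Laplace's method, then pins $c^*$ down: the Gaussians $e^{\alpha[m-m(x_0)]/d_1}$ and $e^{2\alpha[m-m(x_0)]/d_1}$ have variances differing by a factor of two, producing the asymptotic ratio $(2\pi d_1/\alpha)^{N/2}/(\pi d_1/\alpha)^{N/2} = 2^{N/2}$, and balancing gives $c^* = 2^{N/2}[m(x_0) - \theta_{d_2}(x_0)]$. Since the limit is independent of the subsequence, \eqref{eqq001} with $m_1$ replaced by $m(x_0) - \theta_{d_2}(x_0)$ follows.

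The main obstacle is upgrading the subsequential pointwise limit $W_\alpha(x_0) \to c^*$ to genuine uniform convergence of $W_\alpha$ to the constant $c^*$ throughout $\overline{\Omega}$. Away from $x_0$ one must quantify the exponential decay $U_\alpha \leq e^{-\gamma\alpha}$ sharply enough that the conjugating prefactor $e^{\alpha[\max m - m]/d_1}$ does not swallow it, which requires choosing the supersolution so that $\gamma$ strictly dominates the worst-case oscillation of $\max m - m$ on $\overline{\Omega}$. Near $x_0$, the rescaled function $\tilde W_\alpha(y) := W_\alpha(x_0 + y\sqrt{d_1/\alpha})$ must be controlled uniformly in $\alpha$ via elliptic estimates on expanding balls in $\mathbb{R}^N$, so that Laplace's method applies to the actual integrand rather than merely to its Gaussian approximation. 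Executing these two controls simultaneously constitutes the technical heart of the argument.
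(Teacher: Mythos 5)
Your outline reproduces, step for step, the architecture this paper uses for its own generalization, Theorem \ref{main4} (the statement you were given is quoted from \cite{CL} and is not reproved here): comparison to reduce to the single-species bounds, $U_\alpha\to 0$ in every $L^p$, a fixed positive lower solution to rule out $V_\alpha\to 0$ (this is where $\int_\Omega m>0$ enters), elliptic estimates giving $V_\alpha\to\theta_{d_2}$, the variational eigenvalue argument with effective growth $m-V_\alpha$, and the identity $\int_\Omega U_\alpha^2=\int_\Omega U_\alpha(m-V_\alpha)$ evaluated by a Laplace-type computation to pin down the factor $2^{N/2}$. That machinery, however, only delivers the \emph{unweighted} conclusion of type \eqref{main4.3}: the Gaussian profile near $x_0$ plus exponential smallness elsewhere. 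The second display of Theorem \ref{CL} is strictly stronger: it asserts uniform convergence on all of $\Omega$ of the ratio $W_\alpha=U_\alpha e^{\alpha[\max m-m]/d_1}$ to the positive constant $2^{N/2}[m(x_0)-\theta_{d_2}(x_0)]$, i.e.\ a sharp two-sided estimate $U_\alpha=(c^*+o(1))e^{\alpha[m-\max m]/d_1}$ even in the region where $U_\alpha$ is exponentially small. Your proposed remedy for that region --- choose the supersolution so that the decay rate $\gamma$ dominates the oscillation of $\max m-m$ --- is self-defeating: if such a bound held it would force $W_\alpha\to 0$ away from $x_0$, contradicting the very limit you are trying to prove (and, by the lower bound of the type in Theorem \ref{CCL}(i), $U_\alpha$ genuinely decays no faster than $e^{\alpha[m-\max m]/d_1}$, so no such $\gamma$ exists). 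Upper solutions of the kind built in Section \ref{sec:2} can never supply the matching \emph{lower} bound with the sharp constant far from the peak; Chen and Lou obtain it from Harnack-type estimates for operators with large advection, which is the real technical content of \cite{CL} and is missing from your plan. Your closing sentence, which replaces the weighted statement by ``\eqref{eqq001} with $m_1$ replaced by $m(x_0)-\theta_{d_2}(x_0)$'', quietly substitutes the weaker unweighted assertion for the one to be proved.

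Two further, smaller gaps. The boundedness of $W_\alpha$ on a fixed ball around $x_0$ does not follow from Theorem \ref{main2}, which says nothing near $\mathfrak{M}$; it is the content of Lemma \ref{main3lem1} (transferred to $U_\alpha$ via $U_\alpha\le u$), proved by a separate maximum-principle argument on $e^{(-\alpha+\epsilon)m}u$, and your blow-up step also needs the Liouville-type classification (Lemma \ref{liouville}) to know that the rescaled limit is exactly the Gaussian --- ``the Hessian approximation becomes exact'' is not a substitute. Finally, the first display of Theorem \ref{CL} is asserted for general non-degenerate $m$ without \textup{\textbf{(H2)}} or a single peak, so it cannot be routed through $V_\alpha\to\theta_{d_2}$, which in your outline rests on the single-peak hypotheses; the elementary comparison $V_\alpha\le\theta_{d_2}$ (because $V_\alpha$ is a lower solution of \eqref{eq4}) already suffices for the eigenvalue argument, which is exactly how the paper proves \eqref{main41}, and the positivity $\max_{\mathfrak{M}}[m-\theta_{d_2}]>0$ (at the global maximum of $m$, by the maximum principle) also deserves a line.
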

\noindent Note that the condition $\int_\Omega m(x)dx > 0 $ is there to ensure the existence of $\theta_{d_2}$. (See $\cite{CCL}$.) It is interesting that our methods for $\eqref{eq0}$ can be applied to study the coexistence steady-states.

\begin{theorem}\label{main4}
Assume $\int_\Omega m(x)dx > 0$.
\begin{itemize}
\item[(i)] Assume that \textup{\textbf{(H3)}} holds. Given any positive steady-state $(U_\alpha,V_\alpha)$ of $(\ref{eq2})$, if $x_0 \in \mathfrak{M}$, then for any ball $B$ centered at $x_0$,
\begin{equation} \label{main41}
\liminf_{\alpha \rightarrow \infty}\sup_{B} U_\alpha \geq m(x_0) - \theta_{d_2}(x_0).
\end{equation}

If in addition, \textup{\textbf{(H2)}} and \textup{\textbf{(H4)}} hold, then, for each compact subset $K$ of $\overline{\Omega} \setminus \mathfrak{M}$, there exists a constant $\gamma= \gamma(K)>0$ such that whenever $(U_\alpha,V_\alpha)$ is a positive steady-state of $(\ref{eq2})$,
\begin{equation*}
U_\alpha(x) \leq e^{-\gamma \alpha} \quad \text{for every }x\in K.
\end{equation*}

\item[(ii)] If \textup{\textbf{(H2)}}, \textup{\textbf{(H3)}} and \textup{\textbf{(H4)}} hold, $\textup{det} D^2 m(x_0) \neq 0$ for all $x_0 \in \mathfrak{M}$, and $m(x_0) \equiv m_1>0 $  for all local maximum points $x_0 \in \Omega$, then
\begin{equation}\label{main4.2}
\lim_{\alpha \rightarrow \infty} \parallel V_\alpha - \theta_{d_2}\parallel_{C^{1+\beta} (\bar{\Omega})} =0 \qquad \forall \beta \in (0,1),
\end{equation}
\begin{equation}\label{main4.3}
\lim_{\alpha \rightarrow \infty} \parallel U_\alpha(x)  - 2^{N/2}(m_1 - \theta_{d_2}(x_0))e^{{\alpha}[ m(x)- m_1]/{d_1}} \parallel_{L^\infty (O_i)} =0,
\end{equation}
where $O_i$ is any open neighborhood of $x_0$ such that $\tilde{x}_0 \not\in \overline{O}_i$ for any other $\tilde{x_0} \in \mathfrak{M}$.
\end{itemize}
\end{theorem}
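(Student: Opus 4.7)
The plan is to run the analogs of Theorems~\ref{main1}--\ref{main3} for the competition system \eqref{eq2} by treating $m-V_\alpha$ as the effective growth rate seen by $U_\alpha$, and to promote the qualitative fact $V_\alpha\to\theta_{d_2}$ only at the very end. Two a priori bounds come first. From the $V$-equation and $U_\alpha\ge 0$ one has $d_2\Delta V_\alpha+V_\alpha(m-V_\alpha)\ge 0$, so $V_\alpha\le\theta_{d_2}$ by the standard comparison for \eqref{eq4}. A uniform $L^\infty$ bound $\|U_\alpha\|_\infty\le C$ independent of $\alpha$ follows from the same upper-solution construction used for \eqref{eq0}, since the additional term $-U_\alpha V_\alpha\le 0$ only reinforces the supersolution inequality.

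For part (i), the lower bound \eqref{main41} rests on the eigenvalue observation underlying Theorem~\ref{main1}: $U_\alpha$ is a positive principal eigenfunction (with eigenvalue $0$) of $L\phi:=\nabla\cdot(d_1\nabla\phi-\alpha\phi\nabla m)+(m-U_\alpha-V_\alpha)\phi$ under the no-flux boundary condition. Localizing to a small ball around $x_0\in\mathfrak{M}$, and using $V_\alpha\le\theta_{d_2}$ together with $\theta_{d_2}(x_0)<m(x_0)$, the same argument producing \eqref{main1.1} yields $\sup_B U_\alpha\ge m(x_0)-\theta_{d_2}(x_0)-o(1)$. The exponential decay on $K\subset\overline{\Omega}\setminus\mathfrak{M}$ follows by reapplying the upper-solution from Theorem~\ref{main2}: it is built purely from the geometry of $m$, and the extra nonpositive term $-U_\alpha V_\alpha$ only helps, so the same pointwise comparison yields $U_\alpha\le e^{-\gamma\alpha}$ on $K$.

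For part (ii), I first establish \eqref{main4.2}. The exponential decay from part (i) together with the $L^\infty$ bound on $U_\alpha$ shows $U_\alpha\to 0$ in $L^p(\Omega)$ for every $p<\infty$: on any fixed $\epsilon$-neighborhood of $\mathfrak{M}$ the contribution is $O(\epsilon^N)$, and off it the contribution is $O(e^{-\gamma\alpha})$; letting first $\alpha\to\infty$ and then $\epsilon\to 0$ gives the claim. Applying $W^{2,p}$ elliptic estimates and Sobolev embedding to $d_2\Delta V_\alpha+V_\alpha(m-V_\alpha)=U_\alpha V_\alpha\to 0$ in $L^p$ then yields \eqref{main4.2} for every $\beta\in(0,1)$. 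For the profile \eqref{main4.3}, I run the Chen--Lou local analysis of Theorem~\ref{main3} on each $O_i$: write $U_\alpha(x)=C_\alpha(x_0)\,e^{\alpha(m(x)-m_1)/d_1}(1+o(1))$ on $O_i$ and integrate the $U$-equation over $O_i$ to obtain, with boundary flux exponentially small by part (i),
\begin{equation*}
\int_{O_i}U_\alpha^2\,dx=\int_{O_i}U_\alpha(m-V_\alpha)\,dx+O(e^{-\gamma\alpha}).
\end{equation*}
Laplace asymptotics based on the nondegenerate Hessian $D^2 m(x_0)$, combined with $V_\alpha(x_0)\to\theta_{d_2}(x_0)$ from \eqref{main4.2}, then fix $C_\alpha(x_0)\to 2^{N/2}(m_1-\theta_{d_2}(x_0))$.

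The main obstacle will be the profile identification \eqref{main4.3} in part~(ii). Part~(i) is essentially a transcription of Theorems~\ref{main1} and~\ref{main2} with harmless nonlinear corrections. The delicacy in (ii) is twofold: first, the decay of $U_\alpha$ off $\mathfrak{M}$ must be strong enough to upgrade the weak smallness of $U_\alpha V_\alpha$ to the $C^{1+\beta}$-convergence required by \eqref{main4.2}; second, the resulting $V_\alpha\to\theta_{d_2}$ must then be fed back into the Laplace asymptotics at each $x_0$ with enough precision that the leading coefficient $m_1-\theta_{d_2}(x_0)$, rather than $m_1$, appears in front of the Gaussian. The hypothesis that each $O_i$ excludes the other points of $\mathfrak{M}$ is what allows each local integration to isolate a single Gaussian weight.
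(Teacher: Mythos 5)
Your part (i) and your rescaling/Liouville analysis at each peak follow the paper's route (comparison $U_\alpha\le u$, $V_\alpha\le\theta_{d_2}$, the eigenvalue argument of Theorem~\ref{main1} with potential $m-U_\alpha-V_\alpha$, and the upper solutions of Theorem~\ref{main2}), so those steps are fine. The genuine gap is in your derivation of \eqref{main4.2}. From $U_\alpha\to 0$ in $L^p$ and elliptic estimates you only get that $V_\alpha$ is precompact in $C^{1,\beta}$ and that every subsequential limit is a nonnegative solution of \eqref{eq4}; but \eqref{eq4} has two such solutions, $0$ and $\theta_{d_2}$, and compactness alone does not exclude extinction $V_\alpha\to 0$ along a subsequence. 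Tellingly, you never use the hypothesis $\int_\Omega m\,dx>0$. The paper closes exactly this hole: since $U_\alpha\le u\le C_2e^{\alpha_0(m-m_1)}$ for $\alpha\ge\alpha_0$ and $\int_\Omega\big[m-C_2e^{\alpha_0(m-m_1)}\big]>0$ for $\alpha_0$ large (this is where $\int_\Omega m>0$ enters), the logistic problem with growth rate $m-C_2e^{\alpha_0(m-m_1)}$ has a positive solution $V_0$, which is an $\alpha$-independent lower solution for the $V_\alpha$-equation; hence $\theta_{d_2}\ge V_\alpha\ge V_0>0$, the zero limit is excluded, and uniqueness of the positive solution gives $V_\alpha\to\theta_{d_2}$. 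Without this (or an equivalent) barrier, \eqref{main4.2} — and therefore the coefficient $m_1-\theta_{d_2}(x_0)$ in \eqref{main4.3} — is unjustified.

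Two further points on your treatment of \eqref{main4.3}. First, your local integration over $O_i$ differs from the paper, which integrates the $U$-equation over all of $\Omega$ (so the no-flux condition removes boundary terms) and localizes inside the integral using the Gaussian bound $U_\alpha\le Ce^{\alpha(m-m_1)}$ inherited from $U_\alpha\le u$ and Lemma~\ref{main3lem1}; the resulting identity, summed over all peaks, is then combined with the pointwise lower bound $\liminf_{\alpha\to\infty}U_\alpha(x_0)\ge 2^{N/2}(m_1-\theta_{d_2}(x_0))$ at every peak to rule out cancellation between peaks. Your version instead needs the flux $\int_{\partial O_i}\big(d_1\partial_\nu U_\alpha-\alpha U_\alpha\partial_\nu m\big)$ to be exponentially small, but part (i) controls only $U_\alpha$, not $\nabla U_\alpha$, on $\partial O_i$; you must add an elliptic gradient estimate (keeping track of the factor $\alpha$ in the coefficients) on an annulus around $\partial O_i$ staying away from $\mathfrak{M}$ — fixable, but not automatic ``by part (i)''. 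Second, to pass from your quadratic relation $U_\alpha(x_0)\big(U_\alpha(x_0)-2^{N/2}(m_1-\theta_{d_2}(x_0))\big)\to 0$ to convergence of $U_\alpha(x_0)$ you must note that $U_\alpha(x_0)$ stays bounded away from $0$, which follows from \eqref{main41}, the strict inequality $m_1>\theta_{d_2}(x_0)$ (maximum principle), and $U_\alpha(x_0)/\sup_B U_\alpha\to 1$; this step should be made explicit, as the paper does through its analogue of Lemma~\ref{liminfgeq}.
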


\begin{remark}
 \begin{enumerate}
 \item[(i)] $\eqref{main41}$ is useful only when $m(x_0) > \theta_{d_2}(x_0)$. And this is true on $\mathfrak{M}$ if $d_2>0$ is sufficiently small and $\Delta m(x_0)>0$. (The proof of this fact is included in Appendix A.)
 \item[(ii)] The choice of $\gamma$ in Part (i) of Theorem $\ref{main4}$ is independent of choice of positive steady-state $(U_\alpha,V_\alpha)$.
 \item[(iii)] By maximum principle, $m_1 - \theta_{d_2}(x_0) > 0$ in $\eqref{main4.3}$ for any $d>0$.
 \end{enumerate}
\end{remark}

\noindent The rest of the paper are organized as follows. In Section $\ref{sec:2}$ we provide the proofs for Theorems $\ref{main1}$, $\ref{main2}$, and $\ref{main3}$. Section $\ref{sec:3}$ will be devoted to proving Theorem $\ref{main4}$. Finally, some concluding remarks will be included in Section $\ref{sec:4}$.

\section{Proofs of Theorems  $ \textbf{\ref{main1}}$, $\textbf{\ref{main2}}$, and $\textbf{\ref{main3}}$ }
\label{sec:2}

To simplify the presentation, we set $d = 1$ in the proofs. This assumption can be removed with minor corrections. We first obtain the following equation for $u$:

\begin{equation} \label{eq1}
\left\{
\begin{array}{ll}
\nabla\cdot(\nabla u - \alpha u \nabla m) + u ( m - u) = 0 \qquad  & in\phantom{1}\Omega,\\
\frac{\partial u}{\partial \nu}  -  \alpha u \frac{\partial m}{\partial \nu} = 0 \qquad \qquad \qquad \phantom {5}& on\phantom{1} \partial \Omega .
\end{array}\right.
\end{equation}

\begin{proof}[Proof of Theorem $\ref{main1}$]
Let $u$ be the unique solution to $(\ref{eq1})$, and $x_0$ be a strict local maximum of $m(x)$. Then $u$ is the principal eigenfunction of the following eigenvalue problem with principal eigenvalue 0:
\begin{equation} \label{ep3}
\left\{
\begin{array}{ll}
 \nabla\cdot(\nabla \phi - \alpha \phi \nabla m) + (m-u) \phi  + \lambda \phi = 0 & \phantom{1} in\phantom{1}\Omega,\\
\frac{\partial \phi}{\partial \nu} - \alpha \phi \frac{\partial m}{\partial \nu}  = 0 & \phantom{1} on\phantom{1} \partial \Omega.
\end{array}\right.
\end{equation}
Now by the transformation $\phi = e^{\alpha m} \psi$, $(\ref{ep3})$ is equivalent to

\begin{equation} \label{ep4}
\left\{
\begin{array}{ll}
 \nabla\cdot(e^{\alpha m} \nabla \psi ) + (m-u) \psi e^{\alpha m}  + \lambda e^{\alpha m} \psi = 0 &\phantom{1} in \phantom{1}\Omega,\\
\frac{\partial \psi}{\partial \nu}  = 0 & \phantom{1} on \phantom{1} \partial \Omega.
\end{array}\right.
\end{equation}
with principal eigenvalue equal to 0. The variational characterization of the principal eigenvalue of $(\ref{ep4})$ implies
\begin{equation*}
0 = \lambda = \inf_{\psi \in H^1} \Bigg\{ \frac{\int{e^{\alpha m}(|\nabla \psi|^2 + (u - m)\psi^2)}}{\int{e^{\alpha m}\psi^2}} \Bigg\}
\end{equation*}
Given any small ball $B = B_{r_0}(x_0)$ centered at $x_0$, since $m(x)$ attains a strict maximum at $x_0$, $\max_{\partial B_{r_0}(x_0)}m < m(x_0)$. For any $\epsilon$ such that $0<\epsilon< m(x_0) - \max_{\partial B_{r_0}(x_0)}m$, define
\begin{align*}
M_1 :=& m(x_0) - \frac{\epsilon}{3} > m(x_0) - \frac{2\epsilon}{3} := M_2,\\
U_1 :=& \{x \in B_{r_0}(x_0): m(x) > m(x_0) - \frac{\epsilon}{3}  \}\\
U_2 :=& \{x \in B_{r_0}(x_0): m(x) > m(x_0) - \frac{2\epsilon}{3}  \}\\
U_3 :=& \{x \in B_{r_0}(x_0): m(x) > m(x_0) - \epsilon  \}.
\end{align*}
Note that we have $U_1 \subset \subset U_2  \subset \subset U_3 \subset\subset B_{r_0}(x_0)$. Now take a smooth test function $\psi$ such that,
\begin{equation*}
\psi(x) = \left\{
\begin{array}{rl}
1 & \text{if } x \in U_2\\
0 & \text{if } x \in \Omega \setminus U_3
\end{array} \right.
\qquad 0\leq\psi(x)\leq1 \qquad |\nabla\psi| \leq C(\epsilon)
\end{equation*}

\noindent Then,
\begin{align*}
0 &\leq \frac{\int{e^{\alpha m}|\nabla \psi|^2} + \int{e^{\alpha m}(u-m)\psi^2}}{\int{e^{\alpha m}\psi^2}} \\
  &\leq \frac{\int_{U_3}{e^{\alpha M_2}C(\epsilon)^2}}{\int_{{U_1}}{e^{\alpha M_1}}} + \frac{\int_{U_3}{e^{\alpha m}(u-m)\psi^2}}{\int_{U_3}{e^{\alpha m}\psi^2}}\\
  &\leq C'(\epsilon)e^{\alpha(M_2 - M_1)} + \max_{\overline{U_3}} (u - m)\\
  &\leq C'(\epsilon)e^{-\frac{\epsilon\alpha}{3}} + \max_{\overline{U_3}} u - m(x_0) +\epsilon.
\end{align*}
For $\alpha$ sufficiently large, the first term in the last line will become less than $\epsilon$, hence $\eqref{main1.1}$ follows.

\end{proof}
\noindent Next, we turn to the proof of Theorem $\ref{main2}$. We first give the following definition of an upper solution. Denote from now on
\begin{equation*}
L \phi \equiv \nabla \cdot (\nabla \phi - \alpha \phi \nabla m) + (m - \phi ) \phi.
\end{equation*}
\begin{definition} \label{uppersol}
$\overline{u}$ is said to be an upper solution of $(\ref{eq1})$ if (i) $\sim$ (iii) below hold:
\begin{enumerate}
\item[(i)] There exists an open cover $\{U_i \}$ of $\overline{\Omega}$, i.e. $\overline{\Omega} = \bigcup U_i$ where $U_i$'s are relatively open in $\overline{\Omega}$, and, $\phi_i \in C^2(U_i)\text{, }L \phi_i \leq 0$, such that
\begin{equation*}
\overline{u} = \min_i\{\phi_i\} \text{ is continuous in }\overline{\Omega}.
\end{equation*}
\item[(ii)] Denote $ \Omega_i = \{x \in \Omega: \overline{u} = \phi_i  \}$. $\partial \Omega_i$ is piecewise $C^1$, and
\begin{equation}\label{c}
\Omega_i \subset \subset U_i \text{ for all }i.
\end{equation}

\item[(iii)] $\frac{\partial \overline{u}}{\partial \nu} - \alpha \overline{u}\frac{\partial m}{\partial \nu} \geq 0$
for any $x \in \partial \Omega$, whenever the normal derivative $\frac{\partial \overline{u}}{\partial \nu}$ is defined.
\end{enumerate}
\noindent The definition of lower solution can be obtained by reversing all the inequalities above and replacing $\min$ by $\max$.
\end{definition}
\noindent The following is the key to obtaining an upper bound of $u$.

\begin{lemma}\label{upperabove}
Fix $\alpha$ sufficiently large so that the unique positive solution $u$ of $\eqref{eq1}$ exists. If  $\overline{u}>0$ is an upper solution of $\eqref{eq1}$ in the sense of Definition $\ref{uppersol}$, then $\overline{u} \geq u$.
\end{lemma}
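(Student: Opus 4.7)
The plan is a sweeping/contradiction argument. Set $t^* = \inf\{t \geq 1 : t\overline{u} \geq u \text{ on } \overline{\Omega}\}$; by continuity and positivity of $\overline{u}$ and $u$ on the compact set $\overline{\Omega}$, $t^*$ is well-defined and finite, and I want to show $t^* = 1$. A first step is to verify that $t\overline{u}$ is itself an upper solution for every $t \geq 1$: on each piece one has the direct identity $L(t\phi_i) = tL\phi_i + t(1-t)\phi_i^2$, so the hypotheses $L\phi_i \leq 0$ and $\phi_i > 0$ force $L(t\phi_i) \leq 0$; condition (iii) is linear in $\overline{u}$ with $t > 0$; and the decomposition data $\Omega_i \subset\subset U_i$ are unchanged. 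The key point is that for $t^* > 1$ this produces a \emph{strict} deficit $L(t^*\phi_i) \leq t^*(1-t^*)\phi_i^2 < 0$ wherever $\phi_i > 0$, which will drive the contradiction.

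Now suppose for contradiction $t^* > 1$. By compactness, $w := t^*\overline{u} - u \geq 0$ on $\overline{\Omega}$ attains the value $0$ at some $x_0 \in \overline{\Omega}$. Pick an index $i$ with $\overline{u}(x_0) = \phi_i(x_0)$; the inclusion $\overline{\Omega_i} \subset U_i$ from (ii) supplies a neighborhood $W$ of $x_0$ in $\overline{\Omega}$ on which $\phi_i$ is smooth and $\overline{u} \leq \phi_i$, so $\widetilde{w} := t^*\phi_i - u \geq w \geq 0$ in $W$ with $\widetilde{w}(x_0) = 0$. Subtracting $Lu = 0$ from $L(t^*\phi_i) \leq t^*(1-t^*)\phi_i^2$ and expanding produces the linear elliptic inequality
$$\Delta \widetilde{w} - \alpha \nabla \cdot (\widetilde{w}\, \nabla m) + (m - t^*\phi_i - u)\widetilde{w} \leq t^*(1-t^*)\phi_i^2 < 0 \qquad \text{in } W \cap \Omega.$$

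If $x_0 \in \Omega$, the strong maximum principle applied to this inequality rules out an interior zero of the nonnegative $\widetilde{w}$: either the strong minimum forces $\widetilde{w} \equiv 0$ in a ball around $x_0$, which substituted back yields $0 \leq t^*(1-t^*)\phi_i^2 < 0$, or the strict right-hand side directly prevents an interior minimum at all. If $x_0 \in \partial\Omega$, the same argument first gives $\widetilde{w} > 0$ throughout $W \cap \Omega$, and then Hopf's boundary-point lemma yields $\frac{\partial \widetilde{w}}{\partial \nu}(x_0) < 0$. Combined with the no-flux condition $\frac{\partial u}{\partial \nu} = \alpha u\, \frac{\partial m}{\partial \nu}$ at $x_0$ and $\overline{u}(x_0) = \phi_i(x_0)$, this reduces to $\frac{\partial \phi_i}{\partial \nu}(x_0) - \alpha \phi_i(x_0)\frac{\partial m}{\partial \nu}(x_0) < 0$, contradicting (iii) for $\overline{u}$, provided $i$ is chosen so that $\overline{u}$ agrees with $\phi_i$ along the inward normal at $x_0$ (possible via finiteness of the cover and the piecewise $C^1$ regularity of $\partial \Omega_i$, by pigeonholing along a sequence $x_0 - t_k\nu \to x_0$).

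The main obstacle is this last matching step at the boundary: because $\overline{u}$ is only a pointwise minimum of smooth pieces, its one-sided normal derivative at a boundary contact point can depend on the active index, and one must select the $\phi_i$ aligned with $\overline{u}$ along the inward normal so that the condition (iii) on $\overline{u}$ transfers to $\phi_i$ and is directly contradicted by Hopf's inequality. The interior part is by comparison standard, once the scaling identity above is exploited to generate the strict deficit.
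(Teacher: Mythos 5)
Your interior analysis is sound: the scaling identity $L(t\phi_i)=tL\phi_i+t(1-t)\phi_i^2$ is correct, the passage from the kinked minimum $\overline{u}$ to the smooth majorizing piece $\phi_i$ (so that $t^*\phi_i-u\geq 0$ touches zero at $x_0$) is the right trick, and the pointwise evaluation at an interior minimum gives the contradiction when $t^*>1$. The genuine gap is exactly the step you flag, and the pigeonholing you propose does not close it. Definition \ref{uppersol}(iii) constrains $\overline{u}$ only at boundary points where $\frac{\partial \overline{u}}{\partial \nu}$ is defined, and the touching point $x_0\in\partial\Omega$ may lie on a kink of $\overline{u}$ (where two pieces $\phi_i,\phi_j$ cross on $\partial\Omega$), so that derivative need not exist in the stated sense; knowing that $\overline{u}=\phi_i$ along a sequence $x_0-t_k\nu\to x_0$ does not make $\frac{\partial \overline{u}}{\partial \nu}(x_0)$ defined, hence there is no instance of (iii) to contradict. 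Moreover, for a general active piece the transfer goes the wrong way: since $\overline{u}\leq\phi_i$ near $x_0$ with equality at $x_0$, one only gets $\frac{\partial \phi_i}{\partial \nu}(x_0)\leq$ the (one-sided) normal derivative of $\overline{u}$, so (iii) for $\overline{u}$ does not imply the flux inequality for that $\phi_i$. The step is repairable: by (ii), only the finitely many pieces with $x_0\in U_i$ can realize the minimum near $x_0$, so the one-sided normal derivative of $\overline{u}$ at $x_0$ always exists and equals $\max\{\frac{\partial \phi_i}{\partial \nu}(x_0): i\ \text{active at } x_0\}$, realized precisely by your pigeonholed piece; with this interpretation (iii) asserts the flux inequality for at least one active piece, while your Hopf argument applies to \emph{every} active piece (each $t^*\phi_i-u$ is nonnegative near $x_0$ and vanishes there) and gives the strict reverse inequality for all of them. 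You must prove this one-sided differentiability and identification rather than assert the matching; as written, the boundary case is not done.

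For comparison, the paper sidesteps all pointwise issues at kinks and at $\partial\Omega$: it shows by integration by parts over the pieces $\Omega_i$ (Lemma \ref{upper implies weak upper}) that $\overline{u}$ is a weak upper solution, then applies Sattinger's theorem (Theorem \ref{Satt}) with lower solution $0$ to obtain a solution $u'$ with $0\leq u'\leq\overline{u}$ that is stable from above, and concludes $u'\equiv u\leq\overline{u}$ from the instability of $0$ and the uniqueness/global stability of $u$. That route trades your Hopf/sliding analysis for the dynamical information about \eqref{eq0}; your argument, once the boundary step is completed, is a self-contained elliptic comparison that works against any positive classical solution and needs neither Sattinger's theorem nor stability.
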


\noindent To prove Lemma $\ref{upperabove}$, we first relate the above definition of upper solution to that of a weak upper solution from $\cite{S}$.\\


\begin{definition}
$\overline{u} \in W^{1,2}(\Omega)$ is said to be a weak upper solution of $\eqref{eq1}$ if it satisfies
\begin{equation*}
\left\{
\begin{array}{ll}
\int_\Omega[ -(\nabla \overline{u} - \alpha u \nabla m)\cdot \nabla \psi + \overline{u}(m - \overline{u})\psi] \leq 0, \text{ for any }\psi \in W^{1,2}(\Omega)\text{, }\psi \geq 0\\
\frac{\partial\overline{u} }{\partial \nu} - \alpha \overline{u} \frac{\partial m }{\partial \nu}  \geq 0 \phantom{1} on \phantom{1} \partial \Omega,
\end{array}\right.
\end{equation*}
The definition of weak lower solution can be obtained by reversing the inequalities appropriately. Note that by \textup{\textbf{(H2)}}, $-\alpha \frac{\partial m }{\partial \nu}\geq 0$ on $\partial \Omega$.
\end{definition}

\noindent The following lemma can be proved via integration by parts.
\begin{lemma}\label{upper implies weak upper}
Suppose $\overline{u}$ is an upper solution of $(\ref{eq1})$ in the sense of definition $\ref{uppersol}$, then it is a weak upper solution of $\eqref{eq1}$.
\end{lemma}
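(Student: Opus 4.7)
The plan is to test the weak-upper-solution inequality against an arbitrary nonnegative $\psi \in W^{1,2}(\Omega)$ by decomposing the integral over the pieces $\Omega_i$ where $\overline{u}$ agrees with the classical supersolution $\phi_i$, integrating by parts on each piece, and then checking that the boundary contributions that appear on internal interfaces and on $\partial\Omega$ all carry the right sign. Concretely, I would write
\begin{align*}
I := \int_\Omega \bigl[-(\nabla \overline{u} - \alpha \overline{u}\nabla m)\cdot \nabla \psi + \overline{u}(m-\overline{u})\psi\bigr]\,dx
= \sum_i \int_{\Omega_i}\bigl[-(\nabla \phi_i - \alpha \phi_i\nabla m)\cdot \nabla \psi + \phi_i(m-\phi_i)\psi\bigr]\,dx,
\end{align*}
where, after discarding a measure-zero set, the $\Omega_i$'s may be taken pairwise disjoint with $\overline{\Omega}=\bigcup_i \overline{\Omega_i}$. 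Condition (ii) guarantees that $\phi_i$ is $C^2$ in a neighborhood of $\overline{\Omega_i}$, so the divergence theorem applies on each $\Omega_i$.

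Applying integration by parts piece by piece converts each integrand into $(L\phi_i)\psi$ plus a boundary flux; using $L\phi_i \le 0$ and $\psi \ge 0$, the interior contribution is nonpositive, and we are reduced to controlling
\begin{equation*}
-\sum_i \int_{\partial \Omega_i}\bigl(\nabla \phi_i - \alpha \phi_i \nabla m\bigr)\cdot \nu_i\, \psi\, dS,
\end{equation*}
where $\nu_i$ denotes the outward unit normal to $\Omega_i$. I would split each $\partial\Omega_i$ into its portion on $\partial\Omega$ and its portion in the interior of $\Omega$. On $\partial\Omega_i \cap \partial\Omega$, the vector $\nu_i$ coincides with the outward normal $\nu$ of $\Omega$, and because $\overline{u}=\phi_i$ there, the boundary assumption (iii) gives $(\nabla\phi_i-\alpha\phi_i\nabla m)\cdot \nu \ge 0$, so these terms contribute nonpositively to $I$.

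The main obstacle, and the geometric heart of the argument, is the treatment of the internal interfaces. An interior portion of $\partial \Omega_i$ is shared with some $\partial \Omega_j$, with opposite normals $\nu_j = -\nu_i$, so pairing the two contributions gives, since $\phi_i=\phi_j=\overline{u}$ on the interface,
\begin{equation*}
-\int_{\partial\Omega_i \cap \partial\Omega_j}\!\!\Bigl(\tfrac{\partial \phi_i}{\partial \nu_i} - \tfrac{\partial \phi_j}{\partial \nu_i}\Bigr)\psi\,dS.
\end{equation*}
Now $\overline{u}=\min_k\phi_k$ together with $\overline{u}=\phi_i$ on $\Omega_i$ and $\overline{u}=\phi_j$ on $\Omega_j$ forces $\phi_j-\phi_i \ge 0$ on $\Omega_i$ and $\phi_j-\phi_i \le 0$ on $\Omega_j$; since the interface is where this difference vanishes, its derivative in the direction $\nu_i$ (which points from $\Omega_i$ into $\Omega_j$) is nonpositive, so $\tfrac{\partial\phi_i}{\partial\nu_i}-\tfrac{\partial\phi_j}{\partial\nu_i}\ge 0$. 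Combined with $\psi\ge 0$, each interface term again contributes nonpositively, so $I\le 0$, which is exactly the desired weak-upper-solution inequality. The piecewise $C^1$ regularity in (ii) is used to justify applying the divergence theorem on $\Omega_i$ and to make sense of one-sided normal derivatives along the interfaces; a short approximation via $\psi$-mollification handles the fact that $\psi$ is only $W^{1,2}$.
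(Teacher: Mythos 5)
Your argument is correct and is exactly the route the paper intends: the paper only remarks that the lemma ``can be proved via integration by parts,'' and your piecewise integration by parts over the $\Omega_i$ (using $L\phi_i\le 0$ inside, condition (iii) on $\partial\Omega$, cancellation of the advection fluxes by continuity of $\overline{u}$, and the favorable sign of the jump $\tfrac{\partial\phi_i}{\partial\nu_i}-\tfrac{\partial\phi_j}{\partial\nu_i}\ge 0$ coming from $\overline{u}=\min_k\phi_k$) supplies precisely the omitted details. No gaps worth flagging; the mollification remark for $\psi\in W^{1,2}(\Omega)$ and the use of $\Omega_i\subset\subset U_i$ to get $C^2$ regularity up to $\overline{\Omega_i}$ are the right technical supports.
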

\begin{remark}
Lemma $\ref{upper implies weak upper}$ is true even if we drop the $C^1$ regularity of $\partial \Omega_i$ in Definition $\ref{uppersol}$, provided we use the arguments in Lemma 4.10 of $\cite{D}$. This observation will not be used in this paper.
\end{remark}
\noindent We recall the following well-known theorem on upper and lower solutions.
\begin{theorem}[Sattinger]\label{Satt}
If $\overline{u}$ and $\underline{u}$ are weak upper and lower solutions of $(\ref{eq1})$ respectively, and $\overline{u} \geq \underline{u}$, then there exists a classical solution $u$ of $(\ref{eq1})$ such that $\underline{u} \leq u \leq \overline{u}$. Moreover, $u$ is stable from above.
\end{theorem}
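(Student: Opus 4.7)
The plan is to prove this by the classical monotone iteration procedure, transferring Sattinger's original argument to the present advection setting. First I would choose a constant $K$ large enough that the function $f(s) := s(m(x)-s) + K s$ is nondecreasing in $s$ on an interval containing the range of $\overline{u}$; concretely any $K > 2\|\overline{u}\|_{L^\infty} + \|m\|_{L^\infty}$ works. I would also rewrite the linear part of the operator in self-adjoint form by substituting $u = e^{\alpha m}\psi$, under which $\nabla\cdot(\nabla u - \alpha u\nabla m)$ becomes $e^{-\alpha m}\nabla\cdot(e^{\alpha m}\nabla\psi)$ (scaled appropriately) and the oblique boundary condition becomes a Neumann condition on $\psi$; this puts us in the framework of standard symmetric weak theory on $W^{1,2}(\Omega)$ with measure $e^{\alpha m}\,dx$.

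Next I would set up the iteration. Starting from $u_0 := \overline{u}$, define $u_{n+1} \in W^{1,2}(\Omega)$ to be the unique weak solution of the linear Robin problem
\begin{equation*}
-\nabla\cdot(\nabla u_{n+1} - \alpha u_{n+1}\nabla m) + K u_{n+1} = f(u_n) \text{ in }\Omega, \qquad \frac{\partial u_{n+1}}{\partial\nu} - \alpha u_{n+1}\frac{\partial m}{\partial\nu} = 0 \text{ on }\partial\Omega.
\end{equation*}
Under the transformation above this is a coercive symmetric bilinear form, so Lax--Milgram yields a unique solution and the weak maximum principle applies. By induction and the weak super/subsolution inequalities (together with the assumption \textbf{(H2)} used to absorb the boundary flux term with the correct sign), one shows $\underline{u} \le u_{n+1} \le u_n \le \overline{u}$ for all $n$: the key comparison is that $u_0 - u_1$ satisfies the linear inequality weakly and is therefore nonnegative, and analogously against $\underline{u}$.

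Having produced a monotonically decreasing sequence bounded below, I would let $u := \lim_n u_n$ pointwise and then upgrade regularity. Monotone convergence plus the uniform $W^{1,2}$ bound (obtained by testing the linear equation against $u_{n+1}$) gives $u_n \to u$ strongly in $L^2$, hence in the right-hand side $f(u_n) \to f(u)$ in $L^2$; standard elliptic $W^{2,p}$ regularity on the linear problem and $C^3$ smoothness of $m$ then bootstrap $u$ to a classical solution of \eqref{eq1} lying between $\underline{u}$ and $\overline{u}$. Stability from above is inherited from the construction: since the iterates $u_n$ decrease to $u$, any nearby supersolution yields a solution of the parabolic flow that decreases to $u$, so the principal eigenvalue of the linearization at $u$ is nonnegative in the appropriate half-space.

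The main technical obstacle I would expect is handling the non-self-adjoint advection term $-\alpha u\nabla m$ cleanly in the comparison step for weak sub- and supersolutions, especially combined with the oblique boundary condition. The transformation to $\psi = e^{-\alpha m} u$ is what unlocks this: it converts the problem into a symmetric Neumann problem to which both Lax--Milgram and the weak maximum principle apply directly, and the monotonicity of the exponential substitution preserves the order between iterates and between $\overline{u}$ and $\underline{u}$. Everything else is routine bootstrapping using the $C^3$ regularity of $m$ granted by \textbf{(H1)}.
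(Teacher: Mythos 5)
The paper does not actually prove this theorem --- it is quoted as a known result from Sattinger \cite{S} --- and your monotone-iteration argument (Lax--Milgram for the linear problems with the shift $K$ after the symmetrizing substitution $u=e^{\alpha m}\psi$, decreasing iterates starting from $\overline{u}$, uniform $W^{1,2}$ bounds plus elliptic bootstrap, limit squeezed between $\underline{u}$ and $\overline{u}$) is precisely the classical proof of that cited result and is essentially correct. Two minor points: no appeal to \textbf{(H2)} is needed, since the weak supersolution definition already carries the boundary-sign information, and the stability-from-above assertion is cleanest via parabolic comparison (the semiflow started at $\overline{u}$ decreases monotonically to the maximal steady state $u$ of the order interval, so any trajectory starting between $u$ and $\overline{u}$ is squeezed onto $u$) rather than via nonnegativity of the principal eigenvalue of the linearization.
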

\noindent We can now prove Lemma $\ref{upperabove}$ by making use of the dynamics of $\eqref{eq0}$.

\begin{proof}[Proof of Lemma $\ref{upperabove}$]
Since $\overline{u}$ and $0$ are weak upper and lower solutions of $\eqref{eq1}$ respectively. By Theorem $\ref{Satt}$, there exists a solution $u'$ which is stable from above such that $0 \leq u' \leq \overline{u}$. Since $0$ is unstable in $\eqref{eq1}$ (by the global stability of $u$), $u' \not\equiv 0$. Hence, $u' \equiv u$ (by the uniqueness of $u$). Therefore, we have $u \leq \overline{u}$.
\end{proof}

\noindent To prove Theorem $\ref{main2}$, it remains to construct an appropriate upper solution of $\eqref{eq1}$ according to Definition $\ref{uppersol}$. To avoid complicated notations and to illustrate the ideas more clearly, we shall only prove in detail the cases:

\begin{description}
\item[(a)] When $m(x) \equiv m_1>0$ on $\mathfrak{M}$ and $ m > 0$ at each of its critical points,
\item[(b)] When $m(x) \equiv m_1>0$ on $\mathfrak{M}$ and $ m \leq 0$ at some of its critical points,
\item[(c)] When $m(x)$ has two distinct values $0< m_1 < m_2$ on $\mathfrak{M}$ and $ m \leq 0$ at some of its critical points.
\end{description}

\noindent We remark that the same technique can be applied to prove the general case when $m(x)$ has any (finite) number of distinct values on $\mathfrak{M}$. The precise statement of the lemma that leads to Theorem $\ref{main2}$ and some comments on its proof are included in the Appendix B.\\

\begin{proof}[Proof of Theorem $\ref{main2}$]

\noindent Case (a): When $m(x) \equiv m_1>0$ on $\mathfrak{M}$ and $m > 0$ at each of its critical points.
\begin{lemma}\label{casea}
Suppose that $m(x)$ satisfies \textup{\textbf{(H2)}}, \textup{\textbf{(H3)}} and \textup{\textbf{(H4)}}. Assume $m(x)\equiv m_1$ on $\mathfrak{M}$ and $m > 0$ at each of its critical points. Then for any $c<1$, sufficiently close to 1, and for any $0<\epsilon<1$, there exists $\alpha_0(\epsilon,c)> 0$ such that
\begin{equation*}
\overline{u}_1 =  e^{\epsilon \alpha(m(x) - cm_1)}
\end{equation*}
is an upper solution of $(\ref{eq1})$ in the sense of definition $\ref{uppersol}$ for all $\alpha\geq \alpha_0$.
\end{lemma}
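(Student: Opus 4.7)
The plan is to use $\overline{u}_1$ as a single-piece upper solution in the sense of Definition~\ref{uppersol}: take $\phi_1 = \overline{u}_1$ on $U_1 = \overline{\Omega}$, so that $\Omega_1 = \Omega$ and conditions (i)--(ii) reduce to verifying the single differential inequality $L\overline{u}_1 \le 0$ on $\overline{\Omega}$, while (iii) is a short boundary computation.

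A direct computation from $\nabla\overline{u}_1 = \epsilon\alpha\,\overline{u}_1\nabla m$ and $\Delta\overline{u}_1 = \epsilon\alpha\,\overline{u}_1\Delta m + \epsilon^2\alpha^2\,\overline{u}_1|\nabla m|^2$ gives
\begin{equation*}
\frac{L\overline{u}_1}{\overline{u}_1} \;=\; -(1-\epsilon)\alpha\bigl[\Delta m + \epsilon\alpha|\nabla m|^2\bigr] + m - \overline{u}_1,
\end{equation*}
so $L\overline{u}_1 \le 0$ is equivalent to the pointwise inequality
\begin{equation*}
\overline{u}_1(x) \;\ge\; m(x) \;-\; (1-\epsilon)\alpha\bigl[\Delta m(x) + \epsilon\alpha|\nabla m(x)|^2\bigr].
\end{equation*}
By \textbf{(H3)} there are only finitely many critical points $x_1,\dots,x_k$ of $m$; I would fix small disjoint neighborhoods $V_j\ni x_j$ and set $W := \overline{\Omega}\setminus\bigcup_j V_j$. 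On $W$ one has $|\nabla m|\ge\delta>0$, so for $\alpha$ large the bracket is bounded below by $\tfrac{1}{2}\epsilon\alpha\delta^2$ and the right-hand side tends to $-\infty$ uniformly; since $\overline{u}_1>0$ the inequality holds. Near a non-$\mathfrak{M}$ critical point (necessarily a saddle or local minimum in case (a)), \textbf{(H4)} gives $\Delta m(x_j)>0$, so after shrinking $V_j$ we have $\Delta m\ge\delta'>0$ on $V_j$; the bracket is again positive, the right-hand side is of order $-\alpha$, and positivity of $\overline{u}_1$ suffices.

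The one delicate region is a neighborhood of a maximum $x_j\in\mathfrak{M}$, where $D^2m(x_j)\le 0$ permits the bracket to be negative, producing an $O(\alpha)$ right-hand side; this is the main obstacle. It is overcome using $c<1$ and $m(x_j)=m_1$: continuity allows $V_j$ to be chosen so that $m-cm_1\ge\eta(c)>0$ on $V_j$, whence $\overline{u}_1\ge e^{\epsilon\alpha\eta}$ grows exponentially in $\alpha$ and dominates the at-most linear right-hand side once $\alpha\ge\alpha_0(\epsilon,c)$. Matching these estimates on the finitely many $V_j$ and on $W$ yields $L\overline{u}_1\le 0$ throughout $\overline{\Omega}$. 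Finally, (iii) follows from the one-line computation $\frac{\partial\overline{u}_1}{\partial\nu} - \alpha\overline{u}_1\frac{\partial m}{\partial\nu} = (\epsilon-1)\alpha\,\overline{u}_1\,\frac{\partial m}{\partial\nu}\ge 0$ on $\partial\Omega$, which holds since $\epsilon-1<0$ and \textbf{(H2)} gives $\frac{\partial m}{\partial\nu}\le 0$.
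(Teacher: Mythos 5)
Your proposal is correct in substance and follows essentially the same route as the paper: the same computation giving $L\overline{u}_1=\overline{u}_1\bigl\{-(1-\epsilon)\alpha[\Delta m+\epsilon\alpha|\nabla m|^2]+m-\overline{u}_1\bigr\}$, the same three mechanisms (a positive lower bound on $|\nabla m|$ away from critical points, $\Delta m>0$ from \textbf{(H4)} at non-maximum critical points, and exponential growth of $\overline{u}_1$ near $\mathfrak{M}$ because $m-cm_1$ is bounded below there), and the identical boundary check from \textbf{(H2)} and $\epsilon<1$; the paper merely organizes the split by the level sets $\{m\le\sqrt{c}\,m_1\}$ and $\{m>\sqrt{c}\,m_1\}$ rather than by neighborhoods of the critical points, and the one-piece cover you use is exactly what Definition~\ref{uppersol} reduces to for a globally $C^2$ candidate.

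One small inaccuracy to fix: \textbf{(H3)} asserts only finitely many \emph{local maximum} points, not finitely many critical points, so "fix disjoint neighborhoods $V_j$ of the finitely many critical points" is not justified as stated (a compact set of minima or saddles, e.g.\ a circle of minima with $\Delta m>0$, is compatible with \textbf{(H1)}--\textbf{(H4)}). The repair is immediate and does not change your argument: the set of critical points lying in $\{m\le\sqrt{c}\,m_1\}$ (equivalently, outside small neighborhoods of the finitely many points of $\mathfrak{M}$) is compact, \textbf{(H4)} gives $\Delta m>0$ on it, hence $\Delta m\ge\delta'>0$ on an open neighborhood by continuity, and on the complement of that neighborhood within the region away from $\mathfrak{M}$ one has $|\nabla m|\ge\delta>0$ by compactness; this yields the uniform bound $\Delta m+\epsilon\alpha|\nabla m|^2\ge k_1>0$ for all large $\alpha$, which is exactly the estimate the paper invokes.
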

\begin{proof}
\begin{align*}
L\overline{u}_1 &= \Delta \overline{u}_1 - \alpha \nabla m \cdot \nabla \overline{u}_1 + (m-\overline{u}_1 - \alpha \Delta m) \overline{u}_1\\
     & = \overline{u}_1 \big\{(\epsilon^2 - \epsilon)\alpha^2|\nabla m|^2 + (\epsilon - 1) \alpha \Delta m + m -  e^{\epsilon \alpha (m-cm_1)}\big\}\\
     & = \overline{u}_1 \big\{(\epsilon - 1)\alpha [\epsilon \alpha|\nabla m|^2 +  \Delta m] + m -  e^{\epsilon \alpha (m-cm_1)}\big\}.
\end{align*}
\noindent It suffices now to prove that the sum in the large parenthesis is negative. \\

\noindent In $\{ x \in \Omega: m(x) \leq c^{\frac{1}{2}}m_1 \}$, by \textup{\textbf{(H4)}}, there exists $k_1>0$ such that
\begin{equation*}
\epsilon \alpha|\nabla m|^2 +  \Delta m > k_1 \qquad \text{ for all }\alpha \text{ large.}
\end{equation*}
\noindent While $m -  e^{\epsilon \alpha (m-cm_1)}$ is bounded from above by $|m|_\infty$, therefore $L\overline{u}_1 \leq 0$ for all $\alpha$ sufficiently large.

\vspace{0.3cm}

\noindent In $\{ x \in \Omega: m(x) > \sqrt{c}m_1 \}$, $ e^{\epsilon \alpha (m-cm_1)} \geq  e^{\epsilon \alpha (\sqrt{c}m_1-cm_1)} =   e^{k_2 \alpha}$ for some $k_2>0$. Whereas $(\epsilon - 1)\alpha [\epsilon \alpha|\nabla m|^2 +  \Delta m] + m$ grows at most in the order $\alpha^2$, therefore, $L\overline{u}_1 \leq 0 $ if $\alpha$ is sufficiently large. Combining, $L\overline{u}_1\leq 0$ in $\Omega$ if $\alpha$ is sufficiently large.\\

\noindent It remains to check the boundary condition,
\begin{equation*}
\frac{\partial \overline{u}_1}{\partial \nu}  =  \frac{\partial}{\partial \nu} e^{\epsilon \alpha(m(x) - cm_1)} = \overline{u}_1 \epsilon \alpha \frac{\partial m}{\partial \nu}  \geq \overline{u}_1  \alpha \frac{\partial m}{\partial \nu}
\end{equation*}
\noindent making use of \textup{\textbf{(H2)}} and $0<\epsilon<1$. The proof is completed.
\end{proof}
\noindent Notice that $\overline{u}_1$ tends to zero uniformly in any compact subset of $\{ x \in \Omega: m(x) < cm_1 \}$. On the other hand, fix any compact subset $K$ of $\Omega \setminus \mathfrak{M}$,
\begin{equation*}
K \subseteq \{ x \in \Omega: m(x) \leq c^2m_1 \},
\end{equation*}
\noindent if we take $c<1$ sufficiently close to $1$, since all local maximum points of $m(x)$ are strict. Therefore, in this case, Theorem $\ref{main2}$ is a consequence of Lemma $\ref{upperabove}$ and Lemma $\ref{casea}$.

\vspace{0.5cm}

\noindent Case $(b)$: When $m(x) \equiv m_1>0$ on $\mathfrak{M}$ and $ m \leq 0$ at some of its critical points.
\begin{lemma}\label{m<0 upper lemma}
Assume $m(x)$ satisfies \textup{\textbf{(H2)}}, \textup{\textbf{(H3)}} and \textup{\textbf{(H4)}}, and that $m(x) \equiv m_1>0$ on $\mathfrak{M}$. For each $c<1$ close to $1$, there exists, for all $\alpha$ large, an upper solution $\overline{u}_2 > 0$ in the sense of Definition $\ref{uppersol}$ such that
\begin{equation*}
\overline{u}_2(x) \leq
\left\{
\begin{array}{ll}
 e^{\epsilon \alpha (m(x) - cm_1)} \qquad &\text{ when } m(x) >0,\\
 e^{\alpha (m(x) - k)} \qquad &\text{ when } m(x) \leq 0,
\end{array}\right.
\end{equation*}
\noindent where $0<\epsilon<1, k>0$ are appropriately chosen constants independent of $\alpha$.
\end{lemma}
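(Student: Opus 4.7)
The plan is to construct $\overline{u}_2$ as the pointwise minimum of two explicit barriers. Retain the Case~(a) barrier $\phi_1(x) := e^{\epsilon\alpha(m(x) - cm_1)}$, and introduce a secondary barrier $\phi_2(x) := e^{\alpha(m(x) - k)}$ for an appropriate constant $k > 0$. The virtue of $\phi_2$ rests on a clean cancellation of all $\alpha$-dependent terms in $L\phi_2$:
\begin{equation*}
L\phi_2 = \phi_2\bigl\{\alpha^2|\nabla m|^2 + \alpha\Delta m - \alpha^2|\nabla m|^2 - \alpha\Delta m + m - \phi_2\bigr\} = \phi_2(m - \phi_2),
\end{equation*}
so $\phi_2$ is automatically an upper solution on the entire region $\{m \le 0\}$, \emph{regardless} of the sign of $\Delta m$. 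This is precisely what is needed to bypass the new difficulty in Case~(b): at a strict local maximum $x_0$ with $m(x_0) \le 0$ (a \emph{bad} maximum), \textup{\textbf{(H4)}} is silent, $\Delta m(x_0) \le 0$, and the Case~(a) barrier $\phi_1$ fails to satisfy $L\phi_1 \le 0$ near $x_0$.

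The parameters are then chosen so that $\phi_1$ and $\phi_2$ glue cleanly along a single level set of $m$ separating the bad maxima from $\{m \ge 0\}$. Put
\begin{equation*}
M^- := \max\{\, m(x_0) : x_0 \text{ is a strict local maximum of } m \text{ with } m(x_0) \le 0\,\},
\end{equation*}
which is attained by \textup{\textbf{(H3)}} and is strictly negative in the typical subcase. Pick a regular value $m^* \in (M^-, 0)$ of $m$ (available by Sard), and define $k := \epsilon c m_1 + (1 - \epsilon) m^*$, so that $\phi_1 = \phi_2$ precisely on the smooth hypersurface $\{m = m^*\}$; for $m^*$ close enough to $0$ one also has $k > 0$. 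Set
\begin{equation*}
\overline{u}_2 := \min(\phi_1, \phi_2), \qquad \Omega_1 := \{m \ge m^*\}, \qquad \Omega_2 := \{m \le m^*\},
\end{equation*}
and take the open cover $U_1 := \{m > m^* - 2\delta\}$, $U_2 := \{m < m^* + 2\delta\}$ with $\delta > 0$ so small that both $m^* + 2\delta < 0$ and $m^* - 2\delta > M^-$ hold. Then $\Omega_i \subset\subset U_i$, and $\partial \Omega_i$ is piecewise $C^1$ by the regularity of $m^*$. On $U_2 \subset \{m < 0\}$ one has $\phi_2 > 0 > m$, hence $L\phi_2 < 0$ immediately. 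On $U_1$ the argument of Lemma~\ref{casea} runs verbatim: in $\{m \ge \sqrt{c}\, m_1\}$ the $-\phi_1$ term dominates exponentially in $\alpha$, while in $\{m^* - 2\delta < m < \sqrt{c}\, m_1\}$ every critical point of $m$ must be a local minimum or a saddle (the bad maxima having been excluded by the lower bound on $m$), so $\Delta m > 0$ there by \textup{\textbf{(H4)}}; a standard compactness argument then gives $\epsilon\alpha|\nabla m|^2 + \Delta m \ge k_1 > 0$ uniformly on this region for all $\alpha$ large. The Neumann boundary condition on $\partial \Omega$ is satisfied because both $\phi_1$ and $\phi_2$ verify $\partial_\nu \phi_j - \alpha \phi_j \partial_\nu m = (\eta_j - 1)\alpha \phi_j \partial_\nu m \ge 0$, with $\eta_1 = \epsilon$ and $\eta_2 = 1$, invoking \textup{\textbf{(H2)}}. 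The pointwise bounds claimed for $\overline{u}_2$ follow immediately, since $\overline{u}_2 \le \phi_1$ gives the bound on $\{m > 0\}$ and $\overline{u}_2 \le \phi_2$ gives the bound on $\{m \le 0\}$.

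The main obstacle is securing a valid gluing level $m^*$: one must simultaneously have $m^* > M^-$ (so that the Case~(a) curvature estimate, which needs $\epsilon\alpha|\nabla m|^2 + \Delta m > 0$ at every critical point in the $\phi_1$-region, survives there) and $m^* < 0$ (so that the $\phi_2$-region stays inside $\{m < 0\}$, where the estimate $\phi_2 \ge m$ is automatic). This window $(M^-, 0)$ is non-empty precisely when $M^- < 0$; the boundary case $M^- = 0$ of a local maximum touching the zero set of $m$ falls outside the reach of this two-barrier patch and would have to be handled by inserting a third local barrier near such a point, of the type catalogued in Appendix~B.
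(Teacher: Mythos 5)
Your construction is, in substance, the paper's own: the same two exponential barriers, the same computation $L\phi_2=\phi_2(m-\phi_2)$ on $\{m\le 0\}$, and the same Case-(a) curvature argument for $\phi_1$ above the gluing level; the only cosmetic difference is that you prescribe a single crossing level $m^*$ and solve for $k$, whereas the paper fixes a collar $\{-\delta_0<m\le 0\}$, takes $\min\{\phi_0,\phi_1\}$ there, and arranges the parameter inequalities so that the crossing surface falls strictly inside the collar. In the subcase where every nonpositive local maximum has strictly negative value, your argument goes through. But there is a genuine gap: the lemma (Case (b)) includes strict local maxima with $m(x_0)=0$, and this is precisely the situation the paper's Remark after Conjecture \ref{conjecture} flags and its proof is built to handle, via the set $\mathfrak{M}_0$ and the \emph{component-restricted} definition of $\Lambda_1$: the connected components of $\{m>-\delta_0\}$ that meet $\mathfrak{M}_0$ are deliberately excluded from the $\phi_1$-region and assigned the barrier $\phi_0$, which is admissible there because (for $\delta_0$ small) those components lie in $\{m\le 0\}$. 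Your window $(M^-,0)$ is empty exactly in that case, and your proposed repair --- ``a third local barrier of the type catalogued in Appendix B'' --- does not exist: Appendix B treats $\mathfrak{M}_0$ by the same component-exclusion device, not by an additional barrier. So as written your proof establishes the lemma only under the extra hypothesis $M^-<0$, which is strictly weaker than the statement.

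Two smaller points. First, your appeal to Sard to get a regular value $m^*\in(M^-,0)$ is not justified with only $m\in C^3(\overline\Omega)$ when $N\ge 4$, and regularity of $\{m=m^*\}$ is genuinely needed for the piecewise-$C^1$ requirement in Definition \ref{uppersol}, since saddles or minima may have critical values in $(M^-,0)$ even though maxima do not. The paper avoids this by choosing $\delta_0$ below (half of) the largest \emph{negative critical value of any critical point}, so that the entire window contains no critical values and every level in it is automatically regular; adopting that choice would also fix your selection of $m^*$. Second, your $M^-$ should in any case be taken over all critical points with negative value, not only local maxima, for exactly this reason; with that change (and the component-exclusion trick for $\mathfrak{M}_0$) your argument coincides with the paper's.
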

\noindent Notice that in $\{x \in \Omega: m(x) < cm_1   \}$, $\overline{u}_2 \rightarrow 0$ as $\alpha \rightarrow \infty$. We see that in this case, Theorem $\ref{main2}$ follows as before from Lemma $\ref{m<0 upper lemma}$ and Lemma $\ref{upperabove}$.

\begin{proof}[Proof of Lemma $\ref{m<0 upper lemma}$]
\noindent Given $c<1$, let
\begin{equation*}
\phi_1 := e^{\epsilon \alpha (m(x) - cm_1)} \quad \textup{and} \quad \phi_0 := e^{\alpha (m(x) - k)},
\end{equation*}
\begin{equation*}
\begin{array}{ll}
\mathfrak{M}_0 =&  \{ \text{strict local maximum points }x_0\text{ of }m(x) \text{ s.t. } m(x_0) = 0 \}\\
\Lambda_1 = &\text{ The union of all connected components of }\{ x\in \Omega : m(x) > -\delta_0\} \\
&\text{ not intersecting }\mathfrak{M}_0\\
\end{array}
\end{equation*}
\noindent where $0<\delta_0 < -\frac{1}{2} \max\{ m(x_0): x\in\Omega\text{ s.t. } \nabla m (x_0) = 0 \text{ and }m(x_0) < 0\}$ is chosen small enough so that each connected component of $\{x\in\Omega: m(x)>-\delta_0\}$ intersecting $\mathfrak{M}_0$ lies in $\{x \in \Omega: m(x) \leq 0\}$. This is possible since all local maxima are strict. And $0<\epsilon<1$ is chosen to satisfy
\begin{equation}\label{constraints for epsilon}
 \epsilon < \frac{\delta_0}{cm_1 + \delta_0},
\end{equation}
$k$ is chosen such that
\begin{equation}\label{constraints for k}
0< k < \epsilon c m_1 .
\end{equation}

\noindent Set
\begin{equation*}
\overline{u}_2 = \left\{
\begin{array}{ll}
 \phi_1 & \text{in } \{ x\in \Omega: m(x)>0\}\\
 \phi_0 & \text{in } \Omega \setminus \Lambda_1\\
 \min\{ \phi_0, \phi_1 \} & \text{in } \Lambda_1 \setminus \{ x\in \Omega: m(x)>0\}.\\
\end{array} \right.
\end{equation*}

\noindent As before, $L\phi_1\leq 0$ in $\Lambda_1$ for all $\alpha$ large. On the other hand, by a direct computation,
\begin{equation*}
L\phi_0 = \phi_0(m - \phi_0) \leq 0 \quad \text{ on }\{ x\in \Omega: m(x)\leq0\}.
\end{equation*}
\noindent Hence, $L\overline{u}_2 \leq 0$ for all $\alpha$ large, whenever it is $C^2$.
\noindent Also, the boundary condition $\frac{\partial \overline{u}_2}{\partial \nu}- \alpha \overline{u}_2 \frac{\partial m}{\partial \nu}\geq 0$ is satisfied on $\partial \Omega$ whenever it is well-defined.\\

\noindent To see that $\overline{u}_2$ is an upper solution in the sense of Definition $\ref{uppersol}$, it remains to show the continuity of $\overline{u}_2$ and $\eqref{c}$. To this end, it suffices to check the following:

\begin{itemize}
\item[(i)] $\phi_1 > \phi_0$ \qquad in $\{x \in \Omega : m(x) = -\delta_0\} \bigcap \partial (\Lambda_1 \setminus \{ x\in \Omega: m(x)>0\})$;
\item[(ii)] $\phi_1 < \phi_0$ \qquad in $\{x \in \Omega : m(x) = 0\} \bigcap \partial (\Lambda_1 \setminus \{ x\in \Omega: m(x)>0\})$.
\end{itemize}
\noindent More precisely,\\

\noindent (i): When $m(x) = -\delta_0,$ by \eqref{constraints for epsilon},
$$
e^{\epsilon \alpha (m(x) - cm_{1})} = e^{\epsilon \alpha(-\delta_0 - cm_1)} > e^{ \alpha(-\delta_0 - k)}= e^{ \alpha(m(x) - k)}.
$$
\noindent Hence, $\overline{u}_2 =  \phi_0$ in a neighborhood of $\{x \in \Omega : m(x) = -\delta_0\} \bigcap \partial (\Lambda_1 \setminus \{ x\in \Omega: m(x)>0\})$.\\

\noindent (ii): When $m(x) = 0,$ by \eqref{constraints for k},
$$
e^{\epsilon \alpha (m(x) - cm_{1})}  = e^{-\epsilon \alpha cm_{1}} < e^{ -\alpha k} = e^{ \alpha(m(x) - k)}.
$$
\noindent Hence, $\overline{u}_2 =  \phi_1$ in a neighborhood of $\{x \in \Omega : m(x) = 0\} \bigcap \partial (\Lambda_1 \setminus \{ x\in \Omega: m(x)>0\})$.\\

\noindent (Notice that $\phi_i$ are strictly increasing functions of $m(x)$. Hence (possibly making $\delta_0$ smaller) the non-differentiable regions of $\overline{u}_2$ are regular level surfaces of $m(x)$ by the implicit function theorem.)
\end{proof}

\vspace{0.5cm}

\noindent Case $(c)$: When $m(x)$ has two distinct values $0< m_1 < m_2$ on $\mathfrak{M}$ and $ m \leq 0$ at some of its critical points.\\

\noindent We first decompose $\Omega$ according to the value of $m(x)$. Write $\mathfrak{M} = \mathfrak{M}_1 \bigcup \mathfrak{M}_2,$ where $\mathfrak{M}_i = \{ x_0 \in \mathfrak{M}: m(x_0) = m_i \}$, $i=1,2.$ And define
\begin{equation*}
\mathfrak{M}_0 = \{ \text{strict local maximum points }x_0\text{ of }m(x)\text{ s.t. }m(x_0)=0  \},\\
\end{equation*}
\noindent which is possibly empty. Given any $c<1$ close to 1, define
\begin{equation*}\label{nbhpeaks1}
\begin{array}{ll}
\Gamma_1 = &\{x \in \Omega: m(x)>0 \}\\
\Lambda_1 = &\text{ The union of all connected components of }\{ x\in \Omega : m(x) > -\delta_0\}\\
            & \text{ not intersecting }\mathfrak{M}_0\\
\Gamma_2 = &\text{ The union of all connected components of }\{ x\in \Omega : m(x) > c m_1\} \\
&\text{ not intersecting }\mathfrak{M}_1\\
\Lambda_2 =&  \text{ The union of all connected components of }\{ x\in \Omega : m(x) > c^2 m_1\} \\
&\text{ not intersecting }\mathfrak{M}_1
\end{array}
\end{equation*}
\noindent where $\delta_0$ is chosen as in proof of Lemma \ref{m<0 upper lemma}. We have a partition:
$$
\Omega =  (\Omega \setminus \Lambda_1) \cup(\Lambda_1 \setminus \Gamma_1) \cup(\Gamma_1 \setminus \Lambda_2) \cup (\Lambda_2 \setminus \Gamma_2) \cup\Gamma_2 .$$

\begin{lemma}\label{main2lemma}
Given $m(x)$ satisfying \textup{\textbf{(H2)}}, \textup{\textbf{(H3)}} and \textup{\textbf{(H4)}}, and that $m(x)$ attains exactly two distinct values $0< m_1<m_2$ on $\mathfrak{M}$. For each $c<1$ close to $1$, for all $\alpha$ large, there exists an upper solution $\overline{u}_3 > 0$ in the sense of Definition $\ref{uppersol}$ such that
\begin{equation*}
\overline{u}_3(x) \leq
\left\{
\begin{array}{ll}
 e^{\alpha(m(x) - k)} \qquad & \text{ in } \Omega \setminus \Lambda_1\\
 e^{\epsilon_1 \alpha (m(x) - cm_1)} \qquad &\text{ in }  \Lambda_1 \setminus\Lambda_2,\\
 e^{\epsilon_2 \alpha (m(x) - cm_2)} \qquad &\text{ in }  \Lambda_2,
\end{array}\right.
\end{equation*}
\noindent where $0<\epsilon_i<1, k>0$ are appropriately chosen constants independent of $\alpha$.
\end{lemma}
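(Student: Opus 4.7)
The plan is to construct $\overline{u}_3$ as a piecewise minimum of three template functions, generalizing the two-template construction of Lemma \ref{m<0 upper lemma} to accommodate the second peak height $m_2$. Define
\[
\phi_0 = e^{\alpha(m-k)}, \qquad \phi_1 = e^{\epsilon_1\alpha(m-cm_1)}, \qquad \phi_2 = e^{\epsilon_2\alpha(m-cm_2)},
\]
with parameters $\delta_0,k>0$ and $0<\epsilon_2<\epsilon_1<1$ to be pinned down below. The intention is that $\phi_0$ controls the region $\{m\le 0\}$, $\phi_1$ controls the intermediate heights and a neighborhood of the $\mathfrak{M}_1$-peaks, and $\phi_2$ controls the neighborhood of the $\mathfrak{M}_2$-peaks.

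The first step is to verify $L\phi_0\le 0$ on $\{m\le 0\}$, $L\phi_1\le 0$ on $\Lambda_1$, and $L\phi_2\le 0$ on $\Lambda_2$, for all $\alpha$ large. The first is the identity $L\phi_0 = \phi_0(m-\phi_0)\le 0$ in $\{m\le 0\}$. For the second I repeat the split of Lemma \ref{casea}: on $\Lambda_1\cap\{m\le\sqrt{c}\,m_1\}$ there are no local maxima of $m$ (since $\Lambda_1$ excludes $\mathfrak{M}_0$ by construction, while $\mathfrak{M}_1\cup\mathfrak{M}_2$ lies at heights exceeding $\sqrt{c}\,m_1$), so \textbf{(H4)} yields $\epsilon_1\alpha|\nabla m|^2+\Delta m\ge k'>0$ uniformly; on $\Lambda_1\cap\{m>\sqrt{c}\,m_1\}$ the exponential factor $e^{\epsilon_1\alpha(m-cm_1)}$ swamps the $O(\alpha^2)$ polynomial term. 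Exactly the same dichotomy carried out at the higher threshold $\sqrt{c}\,m_2$ gives $L\phi_2\le 0$ on $\Lambda_2$, using that $\Lambda_2$ excludes $\mathfrak{M}_0\cup\mathfrak{M}_1$ and thus contains only the $\mathfrak{M}_2$-peaks, which all sit above $\sqrt{c}\,m_2$.

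Next set
\[
\overline{u}_3 = \begin{cases} \phi_0 & \text{in } \Omega\setminus\Lambda_1, \\ \min\{\phi_0,\phi_1\} & \text{in } \Lambda_1\setminus\Gamma_1, \\ \phi_1 & \text{in } \Gamma_1\setminus\Lambda_2, \\ \min\{\phi_1,\phi_2\} & \text{in } \Lambda_2\setminus\Gamma_2, \\ \phi_2 & \text{in } \Gamma_2. \end{cases}
\]
Continuity of $\overline{u}_3$ across the four interfaces $\{m=-\delta_0\},\{m=0\},\{m=c^2m_1\},\{m=cm_1\}$ reduces to the pointwise comparisons $\phi_0<\phi_1$ at $m=-\delta_0$, $\phi_1<\phi_0$ at $m=0$, $\phi_1<\phi_2$ at $m=c^2m_1$, and $\phi_2<\phi_1$ at $m=cm_1$ (the last being automatic since $\phi_1=1$ while $\phi_2<1$ there, as $m_1<m_2$). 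The first three are linear conditions on the exponents,
\[
\epsilon_1(\delta_0+cm_1)<\delta_0+k,\qquad k<\epsilon_1cm_1,\qquad \epsilon_2(m_2-cm_1)<\epsilon_1m_1(1-c),
\]
which can be satisfied in sequence: fix $\delta_0$ small enough that each connected component of $\{m>-\delta_0\}$ meeting $\mathfrak{M}_0$ stays in $\{m\le 0\}$ (as in Case (b)), then take $\epsilon_1$ small, then $k$ smaller, and finally $\epsilon_2$ smaller still. The compact containment $\Omega_i\subset\subset U_i$ demanded by Definition \ref{uppersol} is then automatic, since each phase boundary of $\overline{u}_3$ is a regular level set of $m$ by the implicit function theorem (after a harmless perturbation of $c$ and $\delta_0$, as already done in Lemma \ref{m<0 upper lemma}). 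The boundary condition $\partial_\nu\overline{u}_3-\alpha\overline{u}_3\partial_\nu m\ge 0$ on $\partial\Omega$ holds for each $\phi_i$ separately from \textbf{(H2)} and $\epsilon_i,1\in(0,1]$, just as in Case (a), and the pointwise upper bounds claimed by the lemma are then immediate from the definition of $\overline{u}_3$.

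The main obstacle is precisely the organization of the parameter hierarchy $\delta_0\gg\epsilon_1\gg k,\epsilon_2$ together with the simultaneous verification of the four interface comparisons; once this bookkeeping is carried out, everything else reduces to the calculations already established in Lemmas \ref{casea} and \ref{m<0 upper lemma}. The same scheme extends verbatim to any finite number of distinct peak values in $\mathfrak{M}$, producing one additional template $\phi_j$ per extra peak height together with a corresponding smallness condition on $\epsilon_j$, as the author notes in the remarks preceding the lemma.
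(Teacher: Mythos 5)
Your construction is exactly the paper's: the same three templates $\phi_0,\phi_1,\phi_2$, the same piecewise-minimum definition on the partition $\Omega\setminus\Lambda_1$, $\Lambda_1\setminus\Gamma_1$, $\Gamma_1\setminus\Lambda_2$, $\Lambda_2\setminus\Gamma_2$, $\Gamma_2$, the same four interface comparisons at $\{m=-\delta_0\},\{m=0\},\{m=c^2m_1\},\{m=cm_1\}$, and parameter constraints equivalent to \eqref{constraints for epsilon_1}--\eqref{constraints for epsilon_2}, with the verification of $L\phi_i\le 0$ and the boundary condition carried out as in Lemmas \ref{casea} and \ref{m<0 upper lemma}. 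The proposal is correct and essentially identical in approach to the paper's proof.
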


\noindent Notice that in $ \{ x \in \Lambda_2:m(x) < cm_2\} \bigcup \{x \in  \Omega \setminus \Lambda_2: m(x) < cm_1\}$,
\begin{equation*}
\overline{u}_3 \rightarrow 0\quad \text{ as }\alpha \rightarrow \infty.
\end{equation*}
\noindent We see that in the case $m(x)$ having two distinct values $m_1 < m_2$ on $\mathfrak{M}$, Theorem $\ref{main2}$ follows as before from Lemma $\ref{main2lemma}$ and Lemma $\ref{upperabove}$.

\begin{proof}[Proof of Lemma $\ref{main2lemma}$]

\noindent Let $\phi_0 := e^{\alpha (m(x) - k)}$ and $\phi_i := e^{\epsilon_i \alpha (m(x) - cm_i)}$ ($i=1,2$), where $0<\epsilon_1<1$ is chosen to satisfy
\begin{equation}\label{constraints for epsilon_1}
 \epsilon_1 < \frac{\delta_0}{cm_1 + \delta_0},
\end{equation}
\noindent $k>0$ and $0<\epsilon_2<1$ are chosen such that
\begin{equation}\label{constraints for k_1}
0< k < \epsilon_1 c m_1,
\end{equation}
\begin{equation}\label{constraints for epsilon_2}
 0<\epsilon_{2} < \min\{\frac{\epsilon_1(c^2 m_1 - c m_1)}{c^2 m_1 - cm_{2}},1\}.
\end{equation}
\noindent We can now define $\overline{u}_3$.
\begin{equation*}
\overline{u}_3 := \left\{
\begin{array}{ll}
 \phi_0 & \text{in } \Omega \setminus \Lambda_1\\
 \phi_1 & \text{in } \Gamma_1 \setminus \Lambda_2\\
 \phi_2 & \text{in } \Gamma_2\\
 \min\{ \phi_0, \phi_1\} & \text{in } \Lambda_1 \setminus \Gamma_1\\
 \min\{ \phi_1, \phi_2 \} & \text{in } \Lambda_2 \setminus \Gamma_2\\
\end{array} \right.
\end{equation*}
\noindent It can then be proved as before that
\begin{equation*}
L\overline{u}_3\leq 0\text{ in }\Omega\quad\text{ and }\quad \frac{\partial \overline{u}_2}{\partial \nu}- \alpha \overline{u}_2 \frac{\partial m}{\partial \nu}\geq 0\text{ on } \partial \Omega
\end{equation*}
\noindent whenever they are defined. It remains to show the continuity of $\overline{u}_3$, as well as $\eqref{c}$. It suffices to show:

\begin{itemize}
\item[(i)] $\phi_0 < \phi_1$ \qquad in $\{x \in \Omega : m(x) = -\delta_0\} \bigcap \partial (\Lambda_1 \setminus \Gamma_1)$;
\item[(ii)] $\phi_0 > \phi_1$ \qquad in $\{x \in \Omega : m(x) = 0\} \bigcap \partial (\Lambda_1 \setminus \Gamma_1)$;
\item[(iii)] $\phi_1 < \phi_2$ \qquad in $\{x \in \Omega : m(x) = c^2m_1\} \bigcap \partial (\Lambda_2 \setminus \Gamma_2)$;
\item[(iv)] $\phi_1 > \phi_2$ \qquad in $\{x \in \Omega : m(x) = cm_1\} \bigcap \partial (\Lambda_2 \setminus \Gamma_2)$.
\end{itemize}
\noindent (i), (ii) can be verified following similar lines as in proof of Lemma $\ref{m<0 upper lemma}$, using $\eqref{constraints for epsilon_1}$ and $\eqref{constraints for k_1}$.\\

\noindent (iii): When $m(x) = c^2m_1$, by \eqref{constraints for epsilon_2}
$$
e^{\epsilon_{1} \alpha (m(x) - cm_{1})}  = e^{\epsilon_{1} \alpha (c^2 m_1 - cm_{1})} < e^{\epsilon_{2} \alpha (c^2 m_1 - cm_{2})} = e^{\epsilon_{2} \alpha (m(x) - cm_{2})},\text{ for }\alpha>0.
$$

\noindent (iv): When $m(x) = cm_1$
$$
e^{\epsilon_{1} \alpha (m(x) - cm_{1})}  = 1 > e^{\epsilon_{2} \alpha c (m_1 - m_{2})} = e^{\epsilon_{2} \alpha (m(x) - cm_{2})},\text{ for }\alpha>0.
$$

\end{proof}
\noindent Hence, Theorem $\ref{main2}$ is proved for the cases when $m(x)$ attains 1 or 2 values on $\mathfrak{M}$.
\end{proof}

\noindent The proof of Theorem $\ref{main3}$ is a modification of the proof in $\cite{CL}$, overcoming the difficulty caused by the local minimum and saddle points of $m(x)$. We start with the following lemma.

\begin{lemma}\label{main3lem1}
With the assumption of Theorem $\ref{main3}$, there exists $C>0$ such that
\begin{equation} \label{eqqlemma}
 u(x) \leq C e^{\alpha (m(x) - m_1)} \qquad \text{for all } x \in \Omega\text{ and all }\alpha\text{ large.}
\end{equation}
\noindent where $m_1$ is the unique value of $m(x)$ on $\mathfrak{M}$.
\end{lemma}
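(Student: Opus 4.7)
The plan is to construct an upper solution $\overline{u}$ of \eqref{eq1} in the sense of Definition~\ref{uppersol} satisfying the pointwise bound $\overline{u}(x) \le C e^{\alpha(m(x)-m_1)}$ everywhere in $\Omega$, with $C>0$ independent of $\alpha$, and then invoke Lemma~\ref{upperabove} to deduce $u \le \overline{u} \le C e^{\alpha(m-m_1)}$. The construction follows the multi-piece partition scheme developed in Lemmas~\ref{m<0 upper lemma} and~\ref{main2lemma}, but now every barrier must carry the full exponent $\alpha$ rather than $\epsilon\alpha$, so that $\overline{u}$ stays below $Ce^{\alpha(m-m_1)}$ pointwise.

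The central barrier is $\phi^{\ast}(x) := C e^{\alpha(m(x)-m_1)}$. Since $\nabla \phi^{\ast} - \alpha\phi^{\ast} \nabla m \equiv 0$, one computes
\begin{equation*}
L\phi^{\ast} = (m - \phi^{\ast})\phi^{\ast},
\end{equation*}
so $\phi^{\ast}$ is a valid upper-solution piece precisely on the open set $\{x:\phi^{\ast}(x) \ge m(x)\}$. For $C\ge 2m_1$ this set contains $\{m \le 0\}$ automatically, since there $\phi^{\ast} >0 \ge m$, and also the thin shell $\{m_1 - m \le (\ln 2)/\alpha\}$ around $\mathfrak{M}$, where $\phi^{\ast} \ge C/2 \ge m_1 \ge m$. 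The Neumann compatibility $\partial \phi^{\ast}/\partial\nu = \alpha \phi^{\ast} \partial m/\partial\nu$ is built in, so part (iii) of Definition~\ref{uppersol} holds.

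On the intermediate region $\Omega_{\mathrm{int}}:=\{0< m< m_1 - (\ln 2)/\alpha\}$, $\phi^{\ast}$ fails to be an upper solution and must be supplemented by a secondary piece of the form $\phi^{\sharp}(x) := C e^{\alpha[(1-\gamma) m(x) - m_1]}$ for some $\gamma \in (0,1)$ fixed independently of $\alpha$. Since $m \ge 0$ on $\Omega_{\mathrm{int}}$, one has $\phi^{\sharp} \le C e^{\alpha(m-m_1)}$ pointwise. A direct computation gives
\begin{equation*}
L\phi^{\sharp} = \phi^{\sharp}\bigl[-\gamma(1-\gamma)\alpha^2 |\nabla m|^2 - \gamma\alpha\Delta m + m - \phi^{\sharp}\bigr],
\end{equation*}
which is $\le 0$ for $\alpha$ large: away from the critical points of $m$ the $O(\alpha^2)$ term dominates, while at the local minima and saddle points of $m$ lying in $\overline{\Omega}_{\mathrm{int}}$, hypothesis~\textup{\textbf{(H4)}} guarantees $\Delta m >0$ and the $-\gamma \alpha \Delta m$ contribution supplies the needed negativity.

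The principal obstacle is the gluing: one needs to patch $\phi^{\ast}$ and $\phi^{\sharp}$ into a continuous $\overline{u} = \min\{\phi^{\ast},\phi^{\sharp}\}$ conforming to (i)--(ii) of Definition~\ref{uppersol}. The two pieces match identically on the interface $\{m=0\}$, where both equal $Ce^{-\alpha m_1}$, but generically not on the shell boundary $\{m = m_1 - O(1/\alpha)\}$. One therefore has to position the shell boundary self-consistently so that $\phi^{\ast} \equiv \phi^{\sharp}$ across it, using the fact that near a non-degenerate peak $x_0 \in \mathfrak{M}$ one has $m_1 - m \asymp |x-x_0|^2$ and $|\nabla m|^2 \asymp m_1 - m$; this fixes the shell width to $O(1/\alpha)$ with a dimensional constant governed by the spectrum of $D^2m(x_0)$. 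In addition, connected components of $\{m>-\delta_0\}$ meeting any local maximum of $m$ at height $0$ must be excised exactly as in the proof of Lemma~\ref{m<0 upper lemma}. Executing this bookkeeping---the most delicate step---is precisely the ``modification of the proof in \cite{CL}'' alluded to in the statement of Theorem~\ref{main3}, and once carried out, Lemma~\ref{upperabove} delivers \eqref{eqqlemma}.
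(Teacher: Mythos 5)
Your strategy (build a global upper solution bounded by $Ce^{\alpha(m-m_1)}$ and invoke Lemma~\ref{upperabove}) differs from the paper, but it has a genuine gap at the gluing step near the peaks, and the gap is not a matter of bookkeeping. With the same constant $C$, your two pieces satisfy $\phi^{\sharp}/\phi^{\ast}=e^{-\gamma\alpha m}$, so they coincide \emph{only} on $\{m=0\}$ and $\phi^{\sharp}<\phi^{\ast}$ throughout $\{m>0\}$; hence $\overline{u}=\min\{\phi^{\ast},\phi^{\sharp}\}=\phi^{\sharp}$ on all of $\{m>0\}$, including at every $x_0\in\mathfrak{M}$. But at a non-degenerate peak, $L\phi^{\sharp}(x_0)=\phi^{\sharp}\bigl[-\gamma\alpha\Delta m(x_0)+m_1-\phi^{\sharp}\bigr]>0$ for all large $\alpha$ (since $\Delta m(x_0)<0$), so $\phi^{\sharp}$ is not a supersolution in a shell of width $O(1/\sqrt{\alpha})$ around each peak, and condition (ii) of Definition~\ref{uppersol} ($\Omega_i\subset\subset U_i$) fails. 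The proposed remedy --- ``position the shell boundary so that $\phi^{\ast}\equiv\phi^{\sharp}$ across it'' --- is impossible: the crossing set of these two exponentials is a single level set of $m$, and it sits at $\{m=0\}$, not at $\{m=m_1-O(1/\alpha)\}$. If you instead take $\phi^{\sharp}=C'e^{\alpha[(1-\gamma)m-m_1]}$ with $C'\neq C$ to force the minimum to select $\phi^{\ast}$ near the peaks, you need $C'\gtrsim Ce^{\gamma\alpha m_1}$, and then in the intermediate region $\phi^{\sharp}\geq Ce^{\alpha(m-m_1)}e^{\gamma\alpha(m_1-m)-o(\alpha)}$, i.e.\ the barrier exceeds the claimed bound by an exponentially large factor, so Lemma~\ref{upperabove} no longer yields \eqref{eqqlemma}. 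Note also that you cannot fall back on Theorem~\ref{main2} in the intermediate region: it gives $u\leq e^{-\gamma\alpha}$ there, which is much weaker than $Ce^{-\alpha(m_1-m)}$ where $m_1-m$ is large.

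The paper avoids barriers altogether for this lemma. It sets $w=e^{(-\alpha+\epsilon)m}u$, checks $\partial w/\partial\nu\leq 0$ on $\partial\Omega$ using \textup{\textbf{(H2)}}, and applies the maximum principle at a maximum point $z^*$ of $w$, obtaining $\epsilon(\alpha-\epsilon)|\nabla m(z^*)|^2+\epsilon\Delta m(z^*)+u(z^*)\leq m(z^*)$. The constant $\epsilon$ is chosen from the ratios $m(x_0)/\Delta m(x_0)$ at positive saddles and minima (using \textup{\textbf{(H4)}}) so that $z^*$ cannot accumulate at such points; hence $z^*\to\mathfrak{M}$, and the non-degeneracy of the peaks gives $m_1-m(z^*)\leq C|\nabla m(z^*)|^2$, whence $(\alpha-\epsilon)(m_1-m(z^*))$ stays bounded and \eqref{eqqlemma} follows from $u(x)\leq u(z^*)e^{(\alpha-\epsilon)(m(x)-m(z^*))}$. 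If you want to salvage an upper-solution proof you would need a barrier with an $\alpha$-dependent correction (not a pure exponential) near each peak; as written, your construction does not close, and attributing the missing step to the ``modification of \cite{CL}'' is inaccurate --- that modification is the choice of $\epsilon$ via \textup{\textbf{(H4)}} in the max-point argument, not a gluing of exponential barriers.
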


\begin{proof}
Consider $w=e^{(-\alpha + \epsilon)m(x)}u(x)$. Then in $\Omega$, $w$ satisfies
\begin{equation}\label{eq6}
\Delta w + (\alpha - 2\epsilon)\nabla m \cdot \nabla w - \{\epsilon (\alpha - \epsilon)|\nabla m|^2 + \epsilon \Delta m + u - m\}w =0
\end{equation}
Let $z^*=z^*(\alpha) \in \overline{\Omega} $ be such that $w(z^*) = \max_{\overline{\Omega}}w$.
Then, for $x\in \Omega$,
\begin{equation} \label{eqq0}
u(x) \leq u(z^*) e^{(-\alpha + \epsilon)(m(z^*)-m(x))}.
\end{equation}
We notice that on $\partial \Omega$,
\begin{align*}
\frac{\partial w}{\partial \nu}
&= e^{(-\alpha + \epsilon)m(x)}(\frac{\partial u}{\partial \nu} + (-\alpha + \epsilon)u\frac{\partial m}{\partial \nu})\\
&= e^{(-\alpha + \epsilon)m(x)}(\alpha u \frac{\partial m}{\partial \nu} + (-\alpha + \epsilon)u\frac{\partial m}{\partial \nu})\\
&= e^{(-\alpha + \epsilon)m(x)} \epsilon \frac{\partial m}{\partial \nu} \leq 0.
\end{align*}
Therefore by the maximum principle, no matter $z^* \in \partial \Omega$ or $\Omega$, $\nabla w (z^*) =0$ and $ \Delta w (z^*) \leq 0$. Hence, by $\eqref{eq6}$\\
\begin{equation}\label{eqq1}
\epsilon (\alpha - \epsilon) |\nabla m|^2 + \epsilon \Delta m + u \leq m \qquad \text{ at }x=z^*,
\end{equation}
\noindent and
\begin{equation} \label{eqq2}
u(z^*) \leq m(z^*) - \epsilon \Delta m(z^*).
\end{equation}
Now take $\epsilon = \max_{x_0} \{ \frac{m(x_0)}{\Delta m(x_0)} \}$, with the maximum taken over all positive saddle points and local minimum points $x_0$ of $m(x)$ such that $m(x_0)>0$. (Take $\epsilon = 1$ if it is an empty set.) Notice that $\epsilon>0$ by \textup{\textbf{(H4)}}. Then by (\ref{eqq1}), we have
\begin{equation*}
 \epsilon(\alpha - \epsilon)|\nabla m|^2 \leq m(z^*) - \epsilon \Delta m \leq |m|_\infty + \epsilon|\Delta m|_\infty ,
\end{equation*}
\noindent which implies that $ |\nabla m(z^*)| \rightarrow 0$ as $\alpha \rightarrow \infty$. Thus,
\begin{equation*}
\textup{dist}(z^*,\{ x \in \Omega: |\nabla m(x)| = 0\}) \rightarrow 0.
\end{equation*}

\noindent Next, we claim that in fact we have $\textup{dist}(z^*, \mathfrak{M})  \rightarrow 0.$\\

\noindent Assume to the contrary that there exists $\alpha_k \rightarrow \infty$, such that $z^*(\alpha_k) \rightarrow x_0$ as $k \rightarrow \infty$ where $x_0$ is a saddle point or a minimum point. Then by $\eqref{eqq2}$ and the choice of $\epsilon$,
\begin{equation*}
0 \leq u(z^*) \leq m(z^*) - \epsilon \Delta m(z^*) \rightarrow m(x_0) - \epsilon \Delta m (x_0) < 0,
\end{equation*}
which is a contradiction. Therefore, $\textup{dist}(z^*, \mathfrak{M})  \rightarrow 0$. Recalling that $m(x) \equiv m_1$ on $\mathfrak{M}$, we deduce that there exists $C>0$ such that
\begin{equation*} \label{eq7}
m_1 - m(z^*) \leq C |\nabla m(z^*)|^2, \text{ for all } \alpha \text{ large,}
\end{equation*}
since the inequality holds in a neighborhood of $\mathfrak{M}$, where $z^*$ eventually enters. Hence by $\eqref{eqq1}$ again,
\begin{equation*}
(\alpha - \epsilon) (m_1 - m(z^*)) \leq C(\alpha - \epsilon) |\nabla m(z^*)|^2 \leq C \big (\frac{m(z^*)}{\epsilon} - \Delta m(z^*)\big).
\end{equation*}
\noindent Therefore,
\begin{equation} \label{eqq3}
 (\alpha - \epsilon)(m_1 - m(z^*)) \leq C\big( \frac{m_1}{\epsilon} + \| \Delta m \|_\infty  \big)
\end{equation}
\\
And for every $x \in {\Omega}$, from $\eqref{eqq0}$,
\begin{align*}
 e^{-\alpha(m(x) - m_1)}u(x)  & \leq e^{-\alpha(m(x) - m_1)}u(z^*) e^{(\alpha - \epsilon)[m(x) - m(z^*)]}\\
             & = u(z^*) e^{\epsilon(m_1 - m(x)) + (\alpha - \epsilon)(m_1 - m(z^*))}\\
             & \leq (m_1 + \epsilon \| \Delta m \|_{\infty})e^{2 \epsilon |m|_\infty + C(\frac{m_1}{\epsilon} + \| \Delta m \|_\infty)},
\end{align*}
\noindent by $\eqref{eqq2}$ and $\eqref{eqq3}$. Since the right hand side is a constant independent of $x$ and $\alpha$, $\eqref{eqqlemma}$ is proved.

\end{proof}

\begin{proof}[Proof of Theorem $\ref{main3}$]
From $(\ref{eqqlemma})$, we see that for all $p\geq1$, $u \rightarrow 0$ in $L^p$ as $\alpha \rightarrow \infty$. For each $x_0 \in \mathfrak{M}$, fix a neighborhood $\mathfrak{U}(x_0)$ of $x_0$, by $\eqref{eqqlemma}$,
\begin{equation*}
u(x)  \leq C e^{\alpha(m(x) - m^*)} \leq C e^{\alpha(\frac{1}{2}(x-x_0)^T D^2 m(x_0)(x-x_0) + C_1|x-x_0|^3)},
\end{equation*}
\noindent where $C_1 = \|D^3 m\|_\infty /6$. Denote $M(x_0,\alpha) = \sup_\mathfrak{U}(x_0) u$, which is attained in $B_{R/\sqrt{\alpha}}(x_0)$ for $R$ sufficiently large, and all large $\alpha$ (by Theorem \ref{main1} and Lemma $\ref{main3lem1}$). Define
\begin{equation*}
W_\alpha(y) = \frac{u(x_0 + \frac{y}{\sqrt{\alpha}})}{M(x_0,\alpha)}
\end{equation*}
\noindent Then $\sup W_\alpha =1$ in $\sqrt{\alpha}\big(\mathfrak{U}(x_0) - x_0\big)$, and
$$
W_\alpha(y) \leq C e^{ \frac{1}{2} y^T D^2 m(x_0)y + \frac{C_1}{\sqrt{\alpha}}|y|^3}\leq C e^{ \frac{1}{3} y^T D^2 m(x_0)y}
$$
for all $\alpha$ large and in $\{y \in \mathbb{R}_N: x_0 + y/\sqrt{\alpha}  \in \Omega\text{, }|y| \leq \frac{-\lambda_N \sqrt{\alpha}}{6C}\}$, where $\lambda_1\leq \cdot \cdot \cdot \leq \lambda_N < 0$ are the eigenvalues of $D^2 m(x_0)$.\\

\noindent To prove $\eqref{eqq001}$, by Lemma $\ref{main3lem1}$ and the fact that $M(x_0,\alpha)$ is bounded, it suffices to show that for each $x_0\in\mathfrak{M}$
\begin{equation} \label{eq8}
\left\{
\begin{array}{ll}
W_\alpha(y) \rightarrow e^{ \frac{1}{2} y^T D^2 m(x_0)y} \text{ in every compact subset of }\mathbb{R}^N, \text{ and}\\
M(x_0,\alpha) \rightarrow 2^{N/2} m(x_0),
\end{array}\right.
\end{equation}
\noindent as $\alpha \rightarrow \infty$. $W_\alpha$ satisfies $\Delta_y W_\alpha + \overrightarrow{P} \cdot \nabla_y W_\alpha + Q W_\alpha = 0,$ where
\begin{equation*}
\overrightarrow{P} = \overrightarrow{P}(\alpha, y) = -\sqrt{\alpha} \cdot \nabla_x m \big( x_0 + \frac{y}{\sqrt{\alpha}}   \big),
\end{equation*}
and
\begin{equation*}
Q(\alpha,y) = -\Delta_x m \big( x_0 + \frac{y}{\sqrt{\alpha}} \big) - \frac{u( x_0 + \frac{y}{\sqrt{\alpha}}) - m( x_0 + \frac{y}{\sqrt{\alpha}})}{\alpha}.
\end{equation*}
The boundedness of $u$ (by $\eqref{eqqlemma}$) implies that
\begin{equation*}
\lim_{\alpha\rightarrow\infty} \overrightarrow{P}(\alpha,y) = -y^T D^2 m(x_0), \phantom{1} \lim_{\alpha\rightarrow \infty} Q(\alpha,y) = - \Delta_x m(x_0),
\end{equation*}
uniformly in any compact subset of $\mathbb{R}^2$. Hence by elliptic estimates (see $\cite{GT}$), using the fact that for each compact subset $K$ in $\mathbb{R}^N$, $W_\alpha$ is bounded in $L^p(K)$ for $p \in (1,\infty]$ and all large $\alpha$, after passing to a subsequence if necessary, as $\alpha \to \infty$, $W_\alpha$ converges to some function $W^*$ uniformly in any compact subset of $\mathbb{R}^N$, and $W^*$ must satisfy
\begin{equation}\label{W1}
\left\{
\begin{array}{ll}
\Delta_y W^* - y D^2 m(x_0) \nabla_y W^* - \Delta m(x_0) W^* = 0 &\text{ in } \mathbb{R}^N,\\
\sup_{\mathbb{R}^N} W^* (y) = 1, \phantom{2} 0\leq W^*(y) \leq C e^{ \frac{1}{3}y^T D^2 m(x_0)y} & \forall y \in \mathbb{R}^N.
\end{array}\right.
\end{equation}
Now we invoke the following lemma, the proof of which makes use of a Liouville-type result due to $\cite{BCN}$ which is formulated differently in $\cite{D}$, and will be included in the Appendix C for completeness.

\begin{lemma}\label{liouville}
If $W^* \in W_{loc}^{1,2}(\mathbb{R}^N)$ satisfies $\eqref{W1}$, then $W^* = e^{\frac{1}{2} y^T D^2 m(x_0) y}$.
\end{lemma}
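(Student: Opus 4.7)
The plan is to reduce Lemma~\ref{liouville} to a Liouville statement for a Gaussian-weighted Laplace-type operator and then carry out a cutoff energy estimate. Write $A := D^2 m(x_0)$, which is symmetric and negative definite because $x_0 \in \mathfrak{M}$ is a non-degenerate local maximum of $m$, and set $G(y) := e^{\frac{1}{2} y^T A y}$. Using $\nabla G = G\,Ay$ and $\Delta G = G(\mathrm{tr}(A) + y^T A^2 y)$, a short direct computation verifies that $G$ itself satisfies the PDE in $\eqref{W1}$, and clearly $\sup_{\mathbb{R}^N} G = G(0) = 1$. I would then set $V := W^*/G$; substituting $W^* = GV$ and collecting terms, one finds that $V$ satisfies the divergence-form equation
\[
\nabla \cdot (G \nabla V) \;=\; 0 \qquad \text{in } \mathbb{R}^N,
\]
equivalently $\Delta V + (Ay)\cdot\nabla V = 0$. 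On each ball this is a non-degenerate linear elliptic equation with smooth coefficients, so classical interior regularity upgrades $V$ from $W^{1,2}_{loc}$ to $C^\infty$.

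The hypotheses on $W^*$ translate into $V \ge 0$ and, using the upper bound $W^*(y) \le C e^{\frac{1}{3} y^T A y}$,
\[
V(y) \;\le\; C e^{-\frac{1}{6} y^T A y}, \qquad G(y) V(y)^2 \;\le\; C^2 e^{\frac{1}{6} y^T A y} \;\le\; C^2 e^{-\mu|y|^2/6},
\]
where $\mu > 0$ is the smallest modulus of an eigenvalue of $A$. Thus $GV^2$ is Gaussianly small at infinity.

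The heart of the argument is a weighted cutoff energy estimate. Choose $\chi_R \in C^\infty_c(\mathbb{R}^N)$ with $\chi_R \equiv 1$ on $B_{R/2}$, $\mathrm{supp}\,\chi_R \subset B_R$, and $|\nabla \chi_R| \le C/R$. Testing the weak form of $\nabla\cdot(G\nabla V) = 0$ against $V\chi_R^2$, integrating by parts, and applying Cauchy--Schwarz yields
\[
\int_{\mathbb{R}^N} G\chi_R^2|\nabla V|^2\,dy \;\le\; \frac{C}{R^2}\int_{\{R/2 \le |y| \le R\}} G V^2 \,dy.
\]
The Gaussian bound on $GV^2$ makes the right-hand side of order $R^{N-2} e^{-\mu R^2/24}$, which tends to $0$ as $R \to \infty$. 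Therefore $\nabla V \equiv 0$, so $V$ is a positive constant, and the normalization $\sup_{\mathbb{R}^N} W^* = 1 = \sup_{\mathbb{R}^N} G$ forces the constant to be $1$. Hence $W^* = G = e^{\frac{1}{2} y^T D^2 m(x_0) y}$, as required.

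The main technical obstacle is ensuring that the product-rule computation for $V = W^*/G$ and the subsequent integration-by-parts are legitimate at $W^{1,2}_{loc}$ regularity; this is standard once one notes that $G$ is smooth and bounded above and below by positive constants on each compact set, so $V$ is automatically smooth by interior elliptic regularity. The Liouville result of Berestycki--Caffarelli--Nirenberg, in the form recorded in~$\cite{D}$, effectively packages the above estimate as an abstract statement for equations $\Delta V + b(y)\cdot\nabla V = 0$ with the confining drift $b = Ay$; invoking that result removes the need to produce the cutoff computation by hand, but the underlying mechanism --- Gaussian decay of the natural weighted energy --- is the same.
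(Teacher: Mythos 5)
Your proposal is correct and follows essentially the same route as the paper: the same transformation $V=W^*/G$ with $G=e^{\frac{1}{2}y^TD^2m(x_0)y}$ reducing \eqref{W1} to the weighted equation $\nabla\cdot(G\nabla V)=0$, followed by the Berestycki--Caffarelli--Nirenberg cutoff energy argument (the paper invokes it as Theorem \ref{thmdu}, verifying the quadratic growth condition from the Gaussian bound, whereas you run the Caccioppoli estimate directly -- the same mechanism), and the normalization $\sup W^*=1$ fixing the constant. No gaps.
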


\noindent The uniqueness of the limit implies that
\begin{equation}\label{sorry3}
\lim_{\alpha \rightarrow \infty} W_\alpha(y)= e^{\frac{1}{2} y^T D^2 m(x_0) y} \text{ uniformly in any compact subset of }\mathbb{R}^N.
\end{equation}
That $W^*$ attains its strict maximum at the origin and $\eqref{eqqlemma}$ implies that
\begin{equation}\label{eq9}
\lim_{\alpha \rightarrow \infty} \frac{u(x_0)}{M(x_0,\alpha)} = W^*(0) = 1.
\end{equation}

\noindent To show the second part of $\eqref{eq8}$, it remains to calculate $\displaystyle \lim_{\alpha\rightarrow\infty} u(x_0)$. In \cite{CL} it was accomplished when $m$ as a single peak via a "global" argument. Here we devise a "local" argument near each $x_0 \in \mathfrak{M}$.

\begin{lemma}\label{liminfgeq}
For each $x_0 \in \mathfrak{M}$, $ \displaystyle \liminf_{\alpha \rightarrow \infty} u(x_0) \geq 2^{N/2}m_1.$
\end{lemma}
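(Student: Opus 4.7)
The plan is to integrate the equation over a small ball around $x_0$ and exploit the already-known limit profile $W^{\ast}$ of the rescaled solution. Fix a ball $B = B_r(x_0) \subset\subset \Omega$ whose closure meets $\mathfrak{M}$ only at $x_0$. Integrating \eqref{eq1} over $B$ and applying the divergence theorem yields
\begin{equation*}
\int_B u(u-m)\,dx = \int_{\partial B}\bigl(\nabla u - \alpha u\nabla m\bigr)\cdot\nu\,dS.
\end{equation*}
Since $\partial B$ lies in a compact subset of $\overline{\Omega}\setminus\mathfrak{M}$, Theorem \ref{main2} yields $u \leq e^{-\gamma\alpha}$ on a neighborhood of $\partial B$, and standard interior elliptic estimates for \eqref{eq1} (whose coefficients blow up only polynomially in $\alpha$) control $|\nabla u|$ on $\partial B$ by at most $\alpha^k e^{-\gamma\alpha}$. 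Hence the boundary flux is $O(e^{-\gamma'\alpha})$ for some $\gamma'>0$.

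Next, rescale via $y=\sqrt{\alpha}(x-x_0)$ so that
\begin{equation*}
\alpha^{N/2}\int_B u^k\,dx = M^k\int_{\sqrt{\alpha}(B-x_0)}W_\alpha^k(y)\,dy,\qquad k=1,2,
\end{equation*}
where $M = M(x_0,\alpha)$. I would combine the locally uniform convergence $W_\alpha \to W^{\ast} := \exp\bigl(\tfrac12 y^T D^2 m(x_0) y\bigr)$ from \eqref{sorry3} with the global Gaussian envelope $W_\alpha(y)\leq C\exp\bigl(\tfrac13 y^T D^2 m(x_0) y\bigr)$ established just before Lemma \ref{liouville}. This envelope supplies an integrable dominant on all of $\mathbb{R}^N$, so dominated convergence gives, along any subsequence along which $M(x_0,\alpha)\to M_\infty$,
\begin{equation*}
\alpha^{N/2}\int_B u\,dx \to M_\infty I_1,\qquad \alpha^{N/2}\int_B u^2\,dx \to M_\infty^2 I_2,
\end{equation*}
where, writing $D := \det(-D^2 m(x_0))>0$, the standard Gaussian formulas give $I_1 = (2\pi)^{N/2}/\sqrt{D}$ and $I_2 = \pi^{N/2}/\sqrt{D}$, so $I_1/I_2 = 2^{N/2}$.

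To relate $\int_B u m\,dx$ to $\int_B u\,dx$, I would Taylor-expand $m(x) = m_1 + O(|x-x_0|^2)$ near $x_0$; after rescaling $|x-x_0|^2 = O(|y|^2/\alpha)$, so
\begin{equation*}
\int_B u m\,dx = m_1\int_B u\,dx + O\bigl(\alpha^{-N/2-1}\bigr).
\end{equation*}
Combining with the exponentially small flux from the first step,
\begin{equation*}
M_\infty^2 I_2 = m_1 M_\infty I_1.
\end{equation*}
Lemma \ref{main3lem1} applied at $x=x_0$ gives $M(x_0,\alpha)\leq C$, while Theorem \ref{main1} gives $\liminf_{\alpha\to\infty}M(x_0,\alpha) \geq m_1 > 0$. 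Hence every subsequential limit satisfies $M_\infty = m_1 I_1/I_2 = 2^{N/2} m_1$, so $M(x_0,\alpha)\to 2^{N/2} m_1$; using $u(x_0)/M(x_0,\alpha)\to 1$ from \eqref{eq9} then gives $u(x_0)\to 2^{N/2} m_1$, which is stronger than the asserted $\liminf$.

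The main obstacle is passing to the limit in the rescaled $L^1$ and $L^2$ integrals over the growing domain $\sqrt{\alpha}(B-x_0)$, rather than on a fixed compact subset; this forces one to use the global cubic-Taylor Gaussian envelope on $W_\alpha$ rather than just the locally uniform convergence. A secondary point is the flux estimate on $\partial B$, where one needs that the $\alpha$-dependence in the coefficients of \eqref{eq1} inflates $|\nabla u|$ only polynomially, leaving the exponential decay from Theorem \ref{main2} intact.
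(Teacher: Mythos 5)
Your proposal is correct in substance, but it proves the lemma by a genuinely different route than the paper. The paper's own proof of Lemma \ref{liminfgeq} only establishes the one-sided bound: it re-runs the variational characterization of the zero principal eigenvalue from the proof of Theorem \ref{main1} with a cutoff test function concentrated near $x_0$, then evaluates the resulting quotient using the rescaled profile \eqref{sorry3}, \eqref{eq9} and Laplace-type asymptotics, which produces the factor $2^{-N/2}$ and hence $\liminf u(x_0)\geq 2^{N/2}m_1$; the exact limit is only obtained afterwards by combining this with Claim \ref{claim}, i.e.\ the \emph{global} identity $\int_\Omega(u^2-um)\,dx=0$, where no flux term appears because of the no-flux boundary condition. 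You instead localize that integral identity to a small ball around each peak, paying for it with a boundary-flux estimate (your rescaling remark that the gradient bound degrades only polynomially in $\alpha$, so Theorem \ref{main2}'s exponential smallness survives, is the right justification), and then compute both the rescaled $L^1$ and $L^2$ integrals by dominated convergence to get the quadratic relation $M_\infty^2 I_2=m_1M_\infty I_1$; Theorem \ref{main1} is needed only to exclude the root $M_\infty=0$. This buys two things: you get the stronger full limit $u(x_0)\to 2^{N/2}m_1$ directly, and, because each peak has its own ball, you avoid the cancellation issue in the sum over $\mathfrak{M}$ that forces the paper to prove the eigenvalue-based lower bound first. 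One small correction: the envelope $W_\alpha(y)\leq Ce^{\frac13 y^TD^2m(x_0)y}$ is not global; the paper establishes it only for $|y|\lesssim\sqrt{\alpha}$, equivalently for $x$ in a fixed small ball around $x_0$ where the cubic Taylor remainder is absorbed. So you must either take the radius $r$ of $B$ small enough that $\sqrt{\alpha}(B-x_0)$ stays in that range, or split off the annulus $B_r\setminus B_{r'}$ where $m-m_1\leq-\delta<0$ and Lemma \ref{main3lem1} makes $u$ exponentially small; either fix is immediate and the rest of your argument stands.
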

\begin{proof}
By following the proof of Theorem $\ref{main1}$, with the same choice of test function $\psi$ and open sets $U_i$, we have for each $\eta>0$,
\begin{align*}
0  \leq& \liminf_{\alpha \rightarrow \infty} \frac{\int_{U_3} e^{\alpha[m-m_1]}(u-m)dx}{\int_{U_2} e^{\alpha[m-m_1]}\,dx}\\
   \leq& \liminf_{\alpha \rightarrow \infty} \left[ \frac{\int_{B_{R/\sqrt{\alpha}}(x_0)} e^{\alpha[m-m_1]} u \,dx}{\int_{U_2} e^{\alpha[m-m_1]}\,dx} +  \frac{\int_{U_3 \setminus B_{R/\sqrt{\alpha}}(x_0)} e^{\alpha[m-m_1]} u \,dx}{\int_{U_2} e^{\alpha[m-m_1]}\,dx} \right] \\
   &- \lim_{\alpha \rightarrow \infty} \frac{\int_{U_3} e^{\alpha[m-m_1]}m\,dx}{\int_{U_2} e^{\alpha[m-m_1]}\,dx}\\
   \leq& \liminf_{\alpha \rightarrow \infty} \left[ \frac{\int_{B_{R/\sqrt{\alpha}}(x_0)} (1+\eta) u (x_0) e^{\alpha[m(x)-m_1] + \frac{\alpha}{2} (x-x_0)^T D^2m(x_0)(x-x_0)}  \,dx}{\int_{B_{R/\sqrt{\alpha}}(x_0)} e^{\alpha[m-m_1]}\,dx} \right.\\
   & + \left. \frac{\int_{U_3 \setminus B_{R/\sqrt{\alpha}}} e^{\alpha[m-m_1]} u \,dx}{\int_{U_2} e^{\alpha[m-m_1]} \,dx}\right] - m(x_0)\\
   \leq& \liminf_{\alpha \rightarrow \infty} \left[(1+\eta)u(x_0) \frac{\int_{B_R(0)} e^{\alpha[m(x_0+ \frac{y}{\sqrt{\alpha}}) - m_1] + \frac{1}{2}y^T D^2m(x_0)y}\,dy}{\int_{B_R(0)} e^{\alpha[m(x_0+ \frac{y}{\sqrt{\alpha}}) - m_1]}\,dy}\right. \\
   &\left. \frac{\int_{\mathbb{R}^N \setminus B_{R}(0)}e^{-c_1 |y|^2\,dy}}{\int_{B_R(0)} e^{-c_2|y|^2}\,dy}\right] - m(x_0)\\
   \leq& (1+\eta)\left[\liminf_{\alpha \rightarrow \infty} u(x_0) \right] (2^{-\frac{N}{2}} + \eta) + \eta - m(x_0)
\end{align*}
The third inequality follows from \eqref{sorry3}, \eqref{eq9} and the Lebesgue Dominated Convergence. In the fourth inequality, we applied the change of coordinates $x = x_0 + \frac{y}{\sqrt{\alpha}}$ and that there exists $c_1,c_2>0$ such that $c_1|y|^2 \leq m_1 - m(x) \leq c_2|y|^2$ (which are consequences of the nondegeneracy of $m$). The last line follows by taking $R>0$ sufficiently large and that
$$
\lim_{\alpha\rightarrow\infty} \alpha[m(x_0 + \frac{y}{\sqrt{\alpha}}) - m_1] = \frac{1}{2} y^T D^2m(x_0) y
$$
\noindent uniformly in compact subsets of $\mathbb{R}^N$. Finally, the lemma is proved by letting $\eta \to 0^+$
\end{proof}
\noindent Next, we claim that
\begin{claim}\label{claim}
$\displaystyle \lim_{\alpha\rightarrow\infty} \sum_{x_0 \in \mathfrak{M}} \int_{\mathbb{R}^N} e^{\frac{1}{2} y^T D^2m(x_0) y} \,dy \left[u(x_0) ^2 - 2^{N/2}m_1 u(x_0) \right] = 0$
\end{claim}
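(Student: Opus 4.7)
The plan is to derive the claim from the global integral identity obtained by integrating \eqref{eq1} over $\Omega$. The Neumann-type boundary condition makes the divergence term vanish, so $\int_\Omega u(u-m)\,dx = 0$ for every large $\alpha$. I would localize this identity by writing $\overline{\Omega} = K \cup \bigcup_{x_0\in\mathfrak{M}} B(x_0)$, with $B(x_0)$ small disjoint neighborhoods of each local maximum and $K$ the compact complement. By Theorem~\ref{main2}, $u\le e^{-\gamma\alpha}$ on $K$, hence $\int_K u(u-m)\,dx = O(e^{-\gamma\alpha})$.

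Near each $x_0\in\mathfrak{M}$ I would rescale via $x=x_0+y/\sqrt{\alpha}$ so that $u(x)=M(x_0,\alpha)\,W_\alpha(y)$. A direct change of variables gives
\begin{align*}
\alpha^{N/2}\int_{B(x_0)} u^2\,dx &= M(x_0,\alpha)^2\int W_\alpha^2\,dy,\\
\alpha^{N/2}\int_{B(x_0)} um\,dx &= M(x_0,\alpha)\int W_\alpha\,m(x_0+y/\sqrt{\alpha})\,dy,
\end{align*}
with the $y$-integration over an expanding set exhausting $\mathbb{R}^N$. Using the pointwise limit $W_\alpha\to e^{\frac{1}{2}y^T D^2m(x_0)y}$ from \eqref{sorry3}, the dominating bound $W_\alpha \le Ce^{\frac{1}{3}y^T D^2m(x_0)y}$, and continuity of $m$, dominated convergence yields
\begin{align*}
\int W_\alpha^2\,dy &\to \int_{\mathbb{R}^N} e^{y^T D^2m(x_0)y}\,dy = 2^{-N/2}\int_{\mathbb{R}^N} e^{\frac{1}{2}y^T D^2m(x_0)y}\,dy,\\
\int W_\alpha\,m(x_0+y/\sqrt{\alpha})\,dy &\to m_1\int_{\mathbb{R}^N} e^{\frac{1}{2}y^T D^2m(x_0)y}\,dy,
\end{align*}
where the Gaussian identity $\int e^{y^T A y}\,dy=2^{-N/2}\int e^{\frac{1}{2} y^T A y}\,dy$ for negative-definite $A$ is routine. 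Combined with $M(x_0,\alpha)=u(x_0)(1+o(1))$ from \eqref{eq9} and the uniform boundedness of $u(x_0)$ provided by Lemma~\ref{main3lem1}, the local contribution becomes
\[
\alpha^{N/2}\int_{B(x_0)} u(u-m)\,dx = \bigl[2^{-N/2}u(x_0)^2 - m_1 u(x_0)\bigr]\int_{\mathbb{R}^N} e^{\frac{1}{2}y^T D^2m(x_0)y}\,dy + o(1).
\]

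Summing over $x_0\in\mathfrak{M}$, absorbing the $\alpha^{N/2}O(e^{-\gamma\alpha})=o(1)$ error from $K$, and using $\int_\Omega u(u-m)\,dx=0$, multiplication by $2^{N/2}$ yields the claim. The main technical step is justifying dominated convergence on the expanding domain for $W_\alpha^2$; the Gaussian-type bound carried over from the proof of Theorem~\ref{main3} makes this clean. A secondary bookkeeping issue is replacing $M(x_0,\alpha)^2$ by $u(x_0)^2$ via \eqref{eq9}, which is legitimate because both quantities are uniformly bounded in $\alpha$ by Lemma~\ref{main3lem1}.
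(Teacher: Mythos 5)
Your proposal is correct and follows essentially the same route as the paper: integrate \eqref{eq1} over $\Omega$, kill the contribution away from $\mathfrak{M}$ by Theorem~\ref{main2}, rescale near each peak, and combine \eqref{sorry3}, \eqref{eq9} and the Gaussian identity $\int e^{y^TAy}\,dy=2^{-N/2}\int e^{\frac12 y^TAy}\,dy$. The only difference is presentational: the paper works on a fixed rescaled ball $B_R(0)$, controls the annulus $B_{r_0}(x_0)\setminus B_{R/\sqrt{\alpha}}(x_0)$ by the pointwise bound of Lemma~\ref{main3lem1} as a Gaussian tail, and sends $\alpha\to\infty$ before $R\to\infty$, whereas you apply dominated convergence directly on the expanding rescaled neighborhoods, which is legitimate since the neighborhoods are taken small enough for the bound $W_\alpha\leq Ce^{\frac13 y^TD^2m(x_0)y}$ to hold there.
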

\begin{proof}[Proof of Claim \ref{claim}]
Integrate \eqref{eq1} over $\Omega$, we have
\begin{align*}
0 = & \int_\Omega (u^2 - um)\,dx\\
  =& \left\{\int_{\cup_{\mathfrak{M}}B_{R/\sqrt{\alpha}}(x_0)}  + \int_{\cup_{\mathfrak{M}}B_{r_0}(x_0) \setminus B_{R/\sqrt{\alpha}}(x_0)}  + \int_{\Omega \setminus \cup_{\mathfrak{M}} B_{r_0}(x_0)}   \right\} (u^2-um)\,dx\\
  =& \sum_{x_0 \in \mathfrak{M}}\left[ \int_{B_{R/\sqrt{\alpha}}(x_0)}(u^2-um)\,dx + C\int_{B_{r_0}(x_0) \setminus B_{R/\sqrt{\alpha}}(x_0)} e^{\alpha [m(x)-m_1]}\,dx\right]\\
  & + O(e^{-\gamma \alpha}).
\end{align*}
\noindent by Theorem \ref{main2} and Lemma \ref{main3lem1}. Multiply by $\alpha^{\frac{N}{2}}$ and changing coordinates $x=x_0+ \frac{y}{\sqrt{\alpha}}$, we see that
$$
0 = \sum_{x_0\in\mathfrak{M}} \int_{B_R(0)}(u^2 -um)(x_0 + \frac{y}{\sqrt{\alpha}})\,dy + O(\int_{\mathbb{R}^N\setminus B_R(0)}e^{-c_1 |y|^2}\,dy) + O(\alpha^{\frac{N}{2}} e^{-\gamma \alpha}).
$$
By \eqref{sorry3} and \eqref{eq9}, for each $R>0$ large, there exists $\alpha_0$ such that for any $\alpha \geq \alpha_0$,
\begin{align*}
0=& \sum_{x_0\in\mathfrak{M}} \int_{B_R(0)} \left[ u^2(x_0) e^{y^T D^2m(x_0) y} - u(x_0) m(x_0) e^{\frac{1}{2} y^T D^2m(x_0)y}  \right]\,dy + o(1)\\
  &+ O(\int_{\mathbb{R}^N \setminus B_R(0)} e^{-c_1|y|^2}\,dy)\\
 =& \sum_{x_0\in\mathfrak{M}} \int_{\mathbb{R}^N}\left[ u^2(x_0) e^{y^T D^2m(x_0)y} - u(x_0)m(x_0) e^{\frac{1}{2} y^T D^2m(x_0)y}\right]\,dy + o(1)\\
  &+ O(\int_{\mathbb{R}^N \setminus B_R(0)} e^{-c_3|y|^2}\,dy).
\end{align*}
\noindent where $\displaystyle \lim_{\alpha \rightarrow \infty} o(1) = 0$. Now take $\alpha \to \infty$ and then $R \to \infty$, we have the desired result.

\end{proof}
\noindent Lemma \ref{liminfgeq} and Claim \ref{claim} implies the second part of $\eqref{eq8}$. This concludes the proof of Theorem \ref{main3}
\end{proof}

\section {Proof of Theorem $\textbf{\ref{main4}}$}
\label{sec:3}

As before, assume for simplicity $d_1=1$.

\begin{proof}[Proof of Theorem $\ref{main4}$]
Notice that $(U_\alpha,V_\alpha)$ satisfies
\begin{equation}\label{main4.1}
\left\{
\begin{array}{rl}
\nabla\cdot(\nabla U - \alpha U \nabla m) + U ( m - U ) &= UV > 0  \qquad in\phantom{1}\Omega,\\
d_2\Delta V + V( m  - V) &= UV > 0 \qquad in\phantom{1} \Omega, \\
\frac{\partial U}{\partial \nu} - \alpha U \frac{\partial m}{\partial \nu} = \frac{\partial V}{\partial \nu} &= 0 \qquad \qquad on\phantom{1} \partial \Omega.
\end{array}\right.
\end{equation}
By method of upper and lower solutions, $0 < U_\alpha \leq u$ and $0<V_\alpha \leq \theta_{d_2}$. $\eqref{main41}$ follows from the same argument as in proof of Theorem $\ref{main1}$, using the inequality $V_\alpha \leq \theta_{d_2}$. That $U_\alpha$ converges to $0$ away from the positive local maximum points of $m(x)$ follows from the corresponding property of $u$.\\

\noindent Now, assume $m \equiv m_1$ on the set of its local maximum points.

\begin{lemma}\label{main4lem1}
If \textup{\textbf{(H2)}}, \textup{\textbf{(H3)}} and \textup{\textbf{(H4)}} hold, and $m(x)$ is constant on its local maximum points, then there exists $C_2>0$ such that
\begin{equation*}
 U_\alpha(x) \leq C_2 e^{\alpha (m(x) - m_1)} \qquad \text{for all } x \in \Omega \text{ and all }\alpha \text{ large}.
\end{equation*}
\end{lemma}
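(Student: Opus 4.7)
The plan is to mimic the proof of Lemma \ref{main3lem1} line by line, exploiting the fact that the additional competition term $-U_\alpha V_\alpha$ in the equation for $U_\alpha$ contributes a nonnegative zeroth-order sink and therefore strengthens, rather than weakens, the key maximum-principle inequality. A shortcut is to observe that the method of upper and lower solutions already yields $0<U_\alpha\leq u$, so the bound follows immediately from Lemma \ref{main3lem1} applied to $u$; I describe the direct route for completeness, since it is self-contained and needs no additional hypothesis beyond those of Lemma \ref{main3lem1}.

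First, set $w(x)=e^{(-\alpha+\epsilon)m(x)}U_\alpha(x)$ for $\epsilon>0$ to be chosen. A direct computation paralleling the derivation of $\eqref{eq6}$ yields
\begin{equation*}
\Delta w+(\alpha-2\epsilon)\nabla m\cdot\nabla w-\{\epsilon(\alpha-\epsilon)|\nabla m|^2+\epsilon\Delta m+U_\alpha+V_\alpha-m\}w=0 \text{ in }\Omega,
\end{equation*}
while the flux condition $\partial U_\alpha/\partial\nu=\alpha U_\alpha\,\partial m/\partial\nu$ combined with \textup{\textbf{(H2)}} gives $\partial w/\partial\nu=\epsilon e^{(-\alpha+\epsilon)m}U_\alpha\,\partial m/\partial\nu\leq 0$ on $\partial\Omega$, exactly as for $u$. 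Letting $z^*=z^*(\alpha)\in\overline{\Omega}$ realize $\max w$, the Hopf lemma forces $\nabla w(z^*)=0$ and $\Delta w(z^*)\leq 0$, so the PDE delivers
\begin{equation*}
\epsilon(\alpha-\epsilon)|\nabla m(z^*)|^2+\epsilon\Delta m(z^*)+U_\alpha(z^*)+V_\alpha(z^*)\leq m(z^*).
\end{equation*}
Discarding the nonnegative $V_\alpha(z^*)$ recovers the analogs of $\eqref{eqq1}$ and $\eqref{eqq2}$ verbatim with $U_\alpha$ in place of $u$.

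From this point the remainder of the argument follows Lemma \ref{main3lem1} step by step: choose $\epsilon=\max_{x_0}\{m(x_0)/\Delta m(x_0)\}$ over positive saddle and minimum points of $m$ (taking $\epsilon=1$ if that set is empty), which is strictly positive by \textup{\textbf{(H4)}}; conclude first that $|\nabla m(z^*)|\to 0$ and then that $\textup{dist}(z^*,\mathfrak{M})\to 0$, ruling out convergence to saddles or minima using the analog of $\eqref{eqq2}$ and \textup{\textbf{(H4)}}; apply the nondegeneracy of $D^2m$ on $\mathfrak{M}$, inherited from the hypotheses of Theorem \ref{main4}(ii) under which Lemma \ref{main4lem1} is invoked, to obtain $m_1-m(z^*)\leq C|\nabla m(z^*)|^2$ in a neighborhood of $\mathfrak{M}$; and combine these to bound $(\alpha-\epsilon)(m_1-m(z^*))$ uniformly in $\alpha$. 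Translating $w(x)\leq w(z^*)$ back to $U_\alpha$ gives $U_\alpha(x)\leq U_\alpha(z^*)\,e^{(\alpha-\epsilon)(m(x)-m(z^*))}$, which rearranges to the desired $U_\alpha(x)\leq C_2\,e^{\alpha(m(x)-m_1)}$. The main obstacle is essentially absent: all genuine work was performed in Lemma \ref{main3lem1}, and the nonnegativity of $V_\alpha$ is the only property needed to make the single new term in the PDE for $w$ cooperate with the argument; the boundary computation is unaffected because the flux condition on $U_\alpha$ has the same form as that on $u$.
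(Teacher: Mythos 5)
Your ``shortcut'' is exactly the paper's proof: the paper disposes of Lemma \ref{main4lem1} in one line, noting that the method of upper and lower solutions gives $0<U_\alpha\leq u$ and then invoking Lemma \ref{main3lem1}. Your direct route is also sound and is a genuinely self-contained alternative: rewriting the $U_\alpha$-equation as in \eqref{eq6} produces the extra zeroth-order term $+V_\alpha$ in the coefficient, and since $V_\alpha\geq 0$ it can simply be discarded at the maximum point $z^*$ of $w$, so the analogs of \eqref{eqq1}--\eqref{eqq2} and the rest of the argument of Lemma \ref{main3lem1} go through verbatim; the boundary computation is unchanged because $U_\alpha$ satisfies the same flux condition as $u$. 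The only caveat, which you correctly flag, is that the step $m_1-m(z^*)\leq C|\nabla m(z^*)|^2$ near $\mathfrak{M}$ uses the nondegeneracy $\det D^2m(x_0)\neq 0$ on $\mathfrak{M}$, which is not listed in the lemma's statement but is available in Theorem \ref{main4}(ii) where the lemma is used --- the same implicit reliance is present in the paper's route, since Lemma \ref{main3lem1} is stated under the assumptions of Theorem \ref{main3}. In short, the paper's proof buys brevity by reusing the scalar-equation bound through the comparison $U_\alpha\leq u$, while your direct computation shows the competition term only strengthens the estimate and would remain available even in situations where a comparison with the single-species steady state $u$ were not at hand.
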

\noindent Lemma $\ref{main4lem1}$ follows from Lemma $\ref{main3lem1}$ and the fact that $0 <U_\alpha \leq u$.\\

\noindent For some $\alpha_0$ large, $\int_\Omega [m - C_2 e^{\alpha_0 ( m(x) - m_1 )}] > 0$ and by a claim on P. 498 in $\cite{CCL}$, there exists a positive solution $V_0$ of
\begin{equation*}\label{main4.2a}
\left\{
\begin{array}{ll}
d_2 \Delta V_0 + V_0 ( m - C_2 e^{\alpha_0 ( m(x) - m_1 )} - V_0) = 0 & \phantom{1} in \phantom{1} \Omega,\\
\frac{\partial V_0}{\partial \nu}  = 0 & \phantom{1} on \phantom{1}  \partial \Omega.
\end{array}
\right.
\end{equation*}
then for all $\alpha \geq \alpha_0$,

\begin{equation*}
\left\{
\begin{array}{ll}
\Delta V_0 + V_0 ( m - U_\alpha - V_0) &\geq 0 \qquad  in\phantom{1}\Omega,\\
\frac{\partial V_0}{\partial \nu} & = 0 \qquad  on\phantom{1} \partial \Omega.
\end{array}\right.
\end{equation*}

\noindent Therefore, $V_0$ is a lower solution of the second equation of $\eqref{main4.1}$ for $V_\alpha$, and,
\begin{equation} \label{eq11}
\theta_{d_2} \geq V_\alpha \geq V_0 > 0 \qquad \text{        for all }\alpha \geq \alpha_0.
\end{equation}

\noindent By Lemma $\ref{main4lem1}$, $U_\alpha \rightarrow 0$ in $L^p$ for any $p>1$. By second equation in $(\ref{main4.1})$, $\eqref{eq11}$, and elliptic estimates and uniqueness, $V \rightharpoonup \theta_{d_2}$ weakly in $W^{2,p}(\Omega)$ in any $p>1$ hence strongly in $C^{1,\beta}(\overline{\Omega})$ for any $\beta \in (0,1)$. This proves $\eqref{main4.2}$.\\

\noindent Fix $x_0 \in \mathfrak{M}$ and let $\widetilde{W}_\alpha(y) = \frac{U_\alpha(x_0 + y/\sqrt{\alpha})}{M(x_0,\alpha)}$, where $M(x_0,\alpha) = \sup_{B_{r_0}(x_0)}U_\alpha$ for some small $r_0>0$. ($M(x_0,\alpha)$ is independent of the choice of $r_0$ by $\eqref{main41}$ and Lemma $\ref{main4lem1}$.) As in proof of Theorem $\ref{main3}$, notice that $\widetilde{W}_\alpha(y) \rightarrow \widetilde{W}^*(y)$ as $\alpha \rightarrow \infty$ uniformly for $y$ in compact sets in $\mathbb{R}^N$ where $\widetilde{W}^*$ satisfies
\begin{equation*}
\Delta_y \widetilde{W}^* - y D^2 m(x_0) \nabla_y \widetilde{W}^* - \Delta m(x_0)  \widetilde{W}^* = 0 \qquad in \phantom{1} \mathbb{R}^N.
\end{equation*}

\noindent Also similar as in proof of Theorem $\ref{main3}$,
\begin{equation}\label{sorry1}
\lim_{\alpha\rightarrow \infty} \widetilde{W}_\alpha(y) = W^*(y) = e^{\frac{1}{2} y^T D^2m(x_0) y}\text{ on compact sets in }\mathbb{R}^N
 \end{equation}
and $\displaystyle \lim_{\alpha\rightarrow \infty} \frac{U(x_i)}{M(x_i,\alpha)} = 1$. Now, by arguments in the proof of Theorem $\ref{main1}$, we have
\begin{equation*}\label{sorry5}
\liminf_{\alpha\rightarrow\infty} \frac{\int_{U_3} e^{\alpha m} (U_\alpha + V_\alpha - m)}{\int_{U_2} e^{\alpha m}} \geq 0.
\end{equation*}
\noindent Then Lemma $\ref{main4lem1}$, $\eqref{main4.2}$ and $\eqref{sorry1}$ implies, for each $x_0 \in \mathfrak{M}$,
\begin{equation}\label{sorry2}
\liminf_{\alpha \rightarrow \infty} U_\alpha(x_0) \geq 2^{N/2}(m_1- \theta_{d_2}(x_0))
\end{equation}
\noindent By integrating the first equation of $\eqref{main4.1}$ over $\Omega$, we have $\int_\Omega U_\alpha (m-U_\alpha-V_\alpha)dx = 0$. And by similar arguments in proving Claim \ref{claim}, we have
\begin{equation}\label{33}
0 = \lim_{\alpha \rightarrow \infty} \sum_{x_0 \in \mathfrak{M}} \int_{\mathbb{R}^N}e^{\frac{1}{2}y^T D^2m(x_0)y}dy[U_\alpha(x_0)^2 - 2^{N/2}(m_1-\theta_{d_2}(x_0)) U_\alpha(x_0)].
\end{equation}
\noindent Finally, $\displaystyle \lim_{\alpha\rightarrow \infty} U_\alpha(x_0) = 2^{N/2}(m_1 - \theta_{d_2}(x_0))$ follows from $\eqref{sorry2}$ and \eqref{33}.
\end{proof}

\section {Concluding Remarks}
\label{sec:4}

\noindent In this paper, the existence of concentration phenomena in the globally stable steady state $u(x)$ of $\eqref{eq0}$ is proved for $m(x)$ which has finitely many local maximum points. Furthermore, the concentration set is shown to be the set of positive local maximum points of $m(x)$. The situation when $m(x)$ contains local maximums that are not strict is however, completely open. It is possible that $u$ would concentrate on some higher dimensional sets.\\

\noindent In this paper, the limiting profile is obtained in the special case when the resource function $m$ has equal peaks. Based on the estimates established in this paper, a special method is introduced to determine the limiting profile for $m$ with peaks of different heights in $\cite{LN}$. However, the method only works for $N=1$. For $N\geq 2$, very recently the limiting profile has been found by the author. This will be published in a forthcoming paper.\\

\noindent We learnt recently that in $\cite{BL}$, a lower solution for $\eqref{eq1}$ can be constructed at each $x_0 \in \mathfrak{M}$ which gives an alternative proof for the existence of peaks on $\mathfrak{M}$.\\

\noindent We also remark that the assumptions on $m(x)$ in $\{x\in\Omega: m(x)<0  \}$ can be weakened substantially. In fact, instead of \textup{\textbf{(H2)}}, \textup{\textbf{(H3)}} and \textup{\textbf{(H4)}}, we only need to assume that there exists $\delta>0$, such that the followings hold.
\begin{description}
\item[(H2')] $ \frac{\partial m}{\partial \nu} \leq 0 $ on $\{x\in\partial{\Omega}: m(x) \geq -\delta\}.$
\item[(H3')] $m(x)$ has finitely many local maximum points in $\{x\in\overline{\Omega}: m(x) \geq -\delta\}$, all being strict local maxima and are located in the interior of $\Omega$.
\item[(H4')] If $x_0\in \overline{\Omega}$ satisfies $m(x_0)\geq -\delta$ and is a local minimum or a saddle point of $m(x)$, then $\Delta m (x_0) > 0$
\end{description}

\noindent Finally, notice that although we have set the diffusion coefficient $d,d_1=1$ for simplicity, the results proved in this paper hold true for any $d,d_1>0$, as stated in Section 1.

\section {Appendix A}\label{sec:A}

\noindent Denote $\theta_d$ to be the unique positive solution to
\begin{equation*}
\left\{
\begin{array}{rl}
d\Delta \theta + \theta ( m - \theta) &= 0 \qquad  in\phantom{1}\Omega,\\
\frac{\partial \theta}{\partial \nu} & = 0 \qquad on\phantom{1} \partial \Omega.
\end{array}\right.
\end{equation*}
\noindent The existence part is standard. (See, e.g. P. 498 in \cite{CCL}.) Also, it is known that (Prop. 3.16 of $\cite{CC3}$)
\begin{equation}\label{theta converges to m+}
\lim_{d\rightarrow 0^+} \theta_d = m^+, \quad \text{ uniformly in }\Omega,\tag{A1}
\end{equation}
\noindent where $m^+(x) : = \max\{m(x),0  \}$.\\

\noindent Here we shall prove that if $x_0 \in \Omega$ is a positive strict local maximum point of $m$ and $\Delta m(x_0)<0$, then $m(x_0) - \theta_{d}(x_0)>0$ for all $d>0$ sufficiently small.

\begin{remark}[\cite{L2}]
When $d$ is not small, there are counter examples showing that the conclusion is not true in general for $x_0 \in \mathfrak{M}$ other than the global maximum point(s).
\end{remark}

\noindent First we show that $m(x_0) \geq \theta_{d}(x_0)$. Assume now to the contrary that for some positive strict positive local maximum point $x_0$ of $m(x)$, for some sequence $d_i \rightarrow 0$,
\begin{equation*}
\theta_{d_i} (x_0) > m(x_0)>0.
\end{equation*}
Now, $x_0 \in \{x\in\Omega: \theta_{d_i} (x) > m(x_0)\}$ for all $i$. Denote by $U_i$ the connected component of $\{x\in\Omega: \theta_{d_i} (x) > m(x_0)\}$ that contains $x_0$, then $U_i \neq \emptyset$ and $\Delta \theta_{d_i} = \theta_{d_i}(\theta_{d_i} - m) \geq 0$ in $U_i$. i.e. $\theta_{d_i}$ is subharmonic in $U_i$. Now for $d_i$ sufficiently small, by (A1), $U_i$ is compactly contained in a neighborhood of $x_0$. In particular, $\theta_{d_i} > m(x_0)$ in $U_i$ and $\theta_{d_i}(x) = m(x_0)$ on $\partial U_i$. This contradicts the property of subharmonic functions. Therefore, $m(x_0) \geq \theta_{d}(x_0)$ for all $d>0$ sufficiently small.\\

\noindent Now assume there exists a sequence $d_i \to 0$ such that $\theta_{d_i}(x_0) = m(x_0)$. We claim that
\begin{claim}\label{claim2}
$\nabla \theta_{d_i}(x_0) = 0$ for all $i$ sufficiently large.
\end{claim}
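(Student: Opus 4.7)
The plan is to strengthen the subharmonicity argument just used for proving $m(x_0) \ge \theta_d(x_0)$ to a purely local statement: for all $d_i$ small, $\theta_{d_i}(x) \le m(x_0)$ in a fixed neighborhood of $x_0$. Once this is in hand, together with the hypothesis $\theta_{d_i}(x_0) = m(x_0)$, the point $x_0$ becomes an interior local maximum of the smooth function $\theta_{d_i}$, and hence $\nabla \theta_{d_i}(x_0) = 0$.

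First I would use that $x_0$ is a \emph{strict} positive local maximum of $m$ to pick a small closed ball $\overline{B} = \overline{B_r(x_0)} \subset \Omega$ such that $m(x) < m(x_0)$ for every $x \in \overline{B}\setminus\{x_0\}$, and in particular $\max_{\partial B} m < m(x_0)$. Since $m > 0$ on $\overline{B}$ (after shrinking $r$), $m^+ = m$ on $\overline{B}$, so the uniform convergence \eqref{theta converges to m+} gives $\max_{\partial B} \theta_{d_i} < m(x_0)$ for all $i$ sufficiently large.

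Next, fix such an $i$ and suppose for contradiction that there exists $y \in B$ with $\theta_{d_i}(y) > m(x_0)$. Let $V$ be the connected component of $\{x \in B : \theta_{d_i}(x) > m(x_0)\}$ containing $y$. The boundary estimate on $\partial B$ forces $\overline{V} \subset B$, and by continuity $\theta_{d_i} \equiv m(x_0)$ on $\partial V$. Inside $V$ we have $\theta_{d_i} > m(x_0) \ge m$ (the second inequality uses $V \subset B$), so the equation gives
\[
-d_i \Delta \theta_{d_i} = \theta_{d_i}(m - \theta_{d_i}) < 0 \quad \text{in } V,
\]
i.e.\ $\theta_{d_i}$ is strictly subharmonic in $V$. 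But then the maximum principle contradicts $\theta_{d_i} \equiv m(x_0) < \sup_V \theta_{d_i}$ on $\partial V$. Hence $\theta_{d_i} \le m(x_0)$ throughout $B$.

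Combining with the assumption $\theta_{d_i}(x_0) = m(x_0)$, the point $x_0 \in B \subset \Omega$ is an interior local maximum of $\theta_{d_i}$. Since $m \in C^3(\overline{\Omega})$, standard elliptic regularity makes $\theta_{d_i}$ at least $C^1$ at $x_0$, so $\nabla \theta_{d_i}(x_0) = 0$, proving Claim \ref{claim2}. There is no serious obstacle here; the main point is that the choice of the ball $B$ (depending on $x_0$ but not on $d_i$) is dictated by the strict local-max hypothesis, and then the same subharmonic/maximum-principle mechanism used earlier applies verbatim in $B$, upgrading the pointwise inequality $\theta_{d_i}(x_0) \le m(x_0)$ to the neighborhood inequality needed for the gradient to vanish.
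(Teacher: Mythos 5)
Your proof is correct and is essentially the paper's own argument: the paper's ``horizontal hyperplane'' is exactly the constant level $m(x_0)$ you compare against, and the contradiction is obtained by the same mechanism --- localization near $x_0$ via (A1) and the strict local maximum, subharmonicity of $\theta_{d_i}$ on the set where it exceeds that level, and the maximum principle. You merely package it as the neighborhood bound $\theta_{d_i}\leq m(x_0)$ on a fixed ball followed by the interior-maximum criterion, whereas the paper argues contrapositively (a nonzero gradient would produce a nearby point above the level), which is the same idea.
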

\noindent Otherwise there exists $x_i \to x_0$ such that $\theta_{d_i}(x_i) > m(x_0)$ and a contradiction can be reached by previous arguments by choosing a horizontal hyperplane.\\

\noindent Now since $\theta_{d_i}(x_0) = m(x_0)$, $\nabla \theta_{d_i}(x_0) = \nabla m(x_0)$ and $\Delta \theta_{d_i} =0 > \nabla m(x_0)$, there exists $x_i \to x_0$ such that $\theta_{d_i}(x_i) > m(x_i)$. (Since otherwise the mean curvature of the surface defined by $\theta_{d_i}$ in $\mathbb{R}^{N+1}$ at $x_0$, which is a multiple of $\Delta \theta_{d_2} (x_0)$, would not be not equal to $0$.) Now fix a neighborhood $U_0$ of $x_0$, and a (slightly tilted) hyperplane $\Sigma_i:L(\mathbb{R}^N,\mathbb{R})$ such that
\begin{equation*}
\theta_{d_i}(x_i) > \Sigma_i(x_i)\text{ and }\Sigma_i(x) > m(x)\text{ in }U_0.
\end{equation*}
By (A1), $\theta_{d_i} \to m$ uniformly on $\partial U_{0}$ while $\min_{\partial{U}_0} \{\Sigma_i(x) - m(x)  \}\geq c>0$ for some constant $c$ independent of $i$. This implies that there is some $U_i \neq \emptyset$ such that
\begin{equation*}
\left\{\begin{array}{ll}
\Delta \theta_{d_i} = \theta_{d_i}(\theta_{d_i} -m)\geq 0 \text{ in }U_i\\
\theta_{d_i} > \Sigma_i \text{ in } U_i, \quad
\theta_{d_i} = \Sigma_i \text{ on }\partial U_i
\end{array}\right.
\end{equation*}
which again contradicts the fact that $\theta_{d_i}$ is subharmonic in $U_i$.

\section {Appendix B}\label{sec:B}
\label{sec:5}
Here we discuss the proof of the general case of Theorem $\ref{main2}$. Recall
\begin{equation*}
\mathfrak{M}= \{ \text{ positive strict local maximum points of }m(x)\text{ in }\Omega\phantom{1}\}.
\end{equation*}
By \textup{\textbf{(H3)}}, $m(x)$ has finitely many local maximum points. Let $0<m_1<m_2<\cdot \cdot \cdot <m_{n_0}$ be the distinct values of $m(x)$ on $\mathfrak{M}$. Decompose
\begin{equation*}
\mathfrak{M} = \bigcup_{i=1}^{n_0} \mathfrak{M}_i,
\end{equation*}
\noindent where $\mathfrak{M}_i = \{ x_0\in \mathfrak{M}: m(x_0) = m_i \}$. And let
\begin{equation*}
\mathfrak{M}_0 :=\{ \text{local maximum points }x_0\text{ of }m(x) \text{ s.t. } m(x_0) = 0 \},
\end{equation*}
\noindent which is possibly empty. For each $c<1$, close to $1$. Define $\delta_0$ as in the proof of Lemma \ref{m<0 upper lemma}. Decompose $\Omega$ according to the value of $m(x)$:
\begin{align*}
\Gamma_{1} = & \{x\in \Omega: m(x) > 0  \}\\
\Lambda_{1}=  & \text{Union of connected components of } \{x\in \Omega: m(x) > -\delta_0  \}\\
               &\text{ not intersecting } \mathfrak{M}_{0},\\
\Gamma_{i} = & \text{Union of connected components of } \{x\in \Omega: m(x) > cm_{i-1}  \}\\
               &\text{ not intersecting } \mathfrak{M}_{i-1},\\
\Lambda_{i}=  & \text{Union of connected components of } \{x\in \Omega: m(x) > c^2 m_{i-1}  \}\\
               &\text{ not intersecting } \mathfrak{M}_{i-1},
\end{align*}
\noindent for $i=2,...,n_0$. Notice that $\Lambda_i \supseteq \Gamma_i \supseteq \Lambda_{i+1} \supseteq \Gamma_{i+1}$. Define
\begin{equation*}
\overline{u}(x) = \left\{
\begin{array}{ll}
 e^{\epsilon_{n_0} \alpha (m(x) - cm_{n_0})} & \text{ in } \Gamma_{n_0}\\
 e^{\epsilon_{i} \alpha (m(x) - cm_{i})} & \text{ in } \Gamma_{i} \setminus \Lambda_{i+1} \\
                                        & \text{ for }i=1,...,n_0 -1\\
 e^{\alpha(m(x)-k) }                      & \text{ in } \Omega \setminus \Lambda_{1}\\
 \min\{e^{\epsilon_{i} \alpha (m(x) - cm_{i})},e^{\epsilon_{i+1} \alpha (m(x) - cm_{i+1})} \} & \text{ in } \Lambda_{i+1} \setminus \Gamma_{i+1}\\
                                        & \text{ for }i=1,...,n_0 -1\\
 \min\{e^{\alpha(m(x)-k) },e^{\epsilon_{1} \alpha (m(x) - cm_{1})}\} & \text{ in } \Lambda_{1} \setminus \Gamma_1.
\end{array} \right.
\end{equation*}
\noindent where $0<\epsilon_i<1, k>0$ are constants chosen such that
\begin{equation*}
 \epsilon_1 < \frac{\delta_0}{c m_1 + \delta_0 },\quad 0< k < \epsilon_1 c m_1 , \text{ and}
\end{equation*}
\begin{equation*}
0<\epsilon_{i+1} < \min\{\frac{\epsilon_i(c^2 m_i - c m_i)}{c^2 m_i - cm_{i+1}},1\}, \quad \text{for } i=1,\cdot\cdot\cdot,n_0 -1.
\end{equation*}

\noindent Then, we have
\begin{lemma} \label{main2lemma1}
Given $m(x)$ satisfying \textup{\textbf{(H2)}}, \textup{\textbf{(H3)}} and \textup{\textbf{(H4)}}. For every $c<1$ sufficiently close to $1$, $\overline{u}> 0$ is an upper solution to $\eqref{eq1}$ according to Definition $\ref{uppersol}$.
\end{lemma}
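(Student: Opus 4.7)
The plan is to iterate the template of Lemma \ref{main2lemma} across the levels $m_1 < m_2 < \cdots < m_{n_0}$, verifying three items: (i) $L\phi \leq 0$ on each smooth piece of $\overline{u}$; (ii) the boundary condition on $\partial \Omega$; (iii) continuity of $\overline{u}$ together with the regularity and inclusion $\Omega_i \subset\subset U_i$ required by Definition \ref{uppersol}. For (i), I would fix $i$ and take the ambient open set to be $\Lambda_i$. The crucial preliminary observation is that, for $c$ sufficiently close to $1$, $\Lambda_i$ contains no local maximum of $m$ of value strictly less than $m_i$: the threshold $c^2 m_{i-1}$ exceeds every strictly smaller local-max value $m_j$ ($j < i-1$), and $\mathfrak{M}_{i-1}$ is excluded by definition. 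Consequently on $\{m \leq c^{1/2} m_i\} \cap \Lambda_i$ every critical point of $m$ is a non-maximum, and \textup{\textbf{(H4)}} supplies $\Delta m > 0$ there; exactly as in Lemma \ref{casea} this yields $\epsilon_i \alpha |\nabla m|^2 + \Delta m > k_1 > 0$ for $\alpha$ large, while on $\{m > c^{1/2} m_i\} \cap \Lambda_i$ the exponential $e^{\epsilon_i \alpha(m - cm_i)} \geq e^{k_2 \alpha}$ dominates the $O(\alpha^2)$ remainder. For $\phi_0$ on $\Omega \setminus \Lambda_1$, the direct identity $L\phi_0 = \phi_0(m - \phi_0) \leq 0$ uses that $m \leq 0$ there, by the choice of $\delta_0$.

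Item (ii) is routine: for every piece $\phi = e^{\epsilon \alpha(m - \mathrm{const})}$ with $\epsilon \in (0,1]$ one has $\partial_\nu \phi - \alpha \phi \, \partial_\nu m = (\epsilon - 1)\alpha \phi \, \partial_\nu m \geq 0$ by \textup{\textbf{(H2)}}. The substantive step is (iii). The nontrivial interfaces lie on the level sets $\{m = -\delta_0\}$, $\{m = 0\}$, $\{m = c^2 m_i\}$, and $\{m = c m_{i+1}\}$. The first two are handled exactly as in Lemma \ref{m<0 upper lemma}, via the constraints on $\epsilon_1$ and $k$. On $\{m = c^2 m_i\}$ the inequality $\phi_{i+1} > \phi_i$ is, after dividing both sides by the negative quantity $c^2 m_i - cm_{i+1}$, equivalent to the cascaded bound $\epsilon_{i+1} < \epsilon_i (c^2 m_i - cm_i)/(c^2 m_i - cm_{i+1})$; on $\{m = cm_{i+1}\}$ the inequality $\phi_i > \phi_{i+1} = 1$ is immediate from $m_{i+1} > m_i$. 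Because each $\phi$ is a strictly increasing function of $m$, these pointwise inequalities force the $\min$ switch to occur on a single level set of $m$, and since for $c$ close to $1$ the interface values avoid all critical values of $m$, the implicit function theorem yields the piecewise $C^1$ regularity required by Definition \ref{uppersol}.

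The main obstacle is the simultaneous consistent choice of the cascaded parameters $\epsilon_i, k$. Fixing $c < 1$ close to $1$, I would choose $\epsilon_1$ and $k$ as in Lemma \ref{m<0 upper lemma} and then iteratively choose $\epsilon_{i+1} \in (0,1)$ small enough to satisfy the bound above; for each fixed $c$ the bounding ratio is a positive constant, so $\epsilon_{i+1} > 0$ always exists, although it may shrink rapidly with $i$. This is harmless because $n_0$ is finite by \textup{\textbf{(H3)}}. Once the parameters are fixed, steps (i)--(iii) become level-by-level iterations of the computations already performed for cases (a), (b), (c), and Theorem \ref{main2} in full generality then follows by combining Lemma \ref{main2lemma1} with Lemma \ref{upperabove} and the observation that the resulting $\overline{u}$ vanishes uniformly on each compact subset of $\overline{\Omega} \setminus \mathfrak{M}$ at the required exponential rate.
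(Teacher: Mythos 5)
Your plan coincides with the paper's intended argument (the paper omits this proof, stating only that it is similar to Lemmas \ref{m<0 upper lemma} and \ref{main2lemma}): the observation that for $c$ close to $1$ the set $\Lambda_i$ contains no local maxima of value below $m_i$, the two-regime verification of $L\phi_i\le 0$ as in Lemma \ref{casea}, the boundary computation via \textup{\textbf{(H2)}}, the interface inequalities driven by the cascaded constraint on $\epsilon_{i+1}$, and the conclusion via Lemma \ref{upperabove} are exactly what is needed. One small indexing slip: since $\Gamma_{i+1}$ is cut at the level $cm_i$ (not $cm_{i+1}$), the inner interface of the transition zone $\Lambda_{i+1}\setminus\Gamma_{i+1}$ lies on $\{m=cm_i\}$, where $\phi_i=1>\phi_{i+1}=e^{\epsilon_{i+1}\alpha c(m_i-m_{i+1})}$; the inequality and its justification ($m_{i+1}>m_i$) are unchanged, so your argument goes through as written.
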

\noindent The proof of Lemma $\ref{main2lemma1}$ is similar to that of Lemma $\ref{main2lemma}$ and Lemma $\ref{m<0 upper lemma}$ and is omitted.\\

\noindent Notice that the full statement of Theorem $\ref{main2}$ follows from the above lemma and Lemma $\ref{upperabove}$.

\section {Appendix C}\label{sec:C}

\noindent Next, we shall prove Lemma $\ref{liouville}$. We first state and prove the following Liouville-type theorem which is due to $\cite{BCN}$, following the formulation in $\cite{D}$.
\begin{theorem}\label{thmdu}
Let $\sigma \in L^\infty_{loc}(\mathbb{R}^N)$ be a positive function. Assume that $\Phi \in W^{1,2}_{loc}(\mathbb{R}^N)$ satisfies in the weak sense
\begin{equation}\label{7.26}
\Phi \text{\textup{ div}}(\sigma^2 \nabla \Phi) \geq 0\text{ in }\mathbb{R}^N, \tag{C1}
\end{equation}
\noindent and for some $C>0$ and every $R>1$,
\begin{equation}\label{7.27}
\int_{B_R(0)} (\sigma \Phi)^2dx \leq C R^2. \tag{C2}
\end{equation}
\noindent Then $\Phi$ is a constant.
\end{theorem}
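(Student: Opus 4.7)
The plan is to extract decay from the weak differential inequality by combining a weighted Caccioppoli-type estimate with a carefully calibrated logarithmic cutoff. First I would rewrite the hypothesis \eqref{7.26} in a form amenable to test functions. Using the distributional identity $\Phi\operatorname{div}(\sigma^2\nabla\Phi) = \tfrac{1}{2}\operatorname{div}(\sigma^2\nabla\Phi^2) - \sigma^2|\nabla\Phi|^2$, inequality \eqref{7.26} is equivalent, after integration by parts, to
\begin{equation*}
\int_{\mathbb{R}^N} \sigma^2 |\nabla\Phi|^2\, \psi\,dx \;\leq\; -\int_{\mathbb{R}^N} \sigma^2\, \Phi\,\nabla\Phi \cdot \nabla\psi\,dx
\end{equation*}
for every nonnegative $\psi \in W^{1,2}(\mathbb{R}^N)$ with compact support. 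Substituting $\psi = \eta^2$ for a compactly supported Lipschitz cutoff $\eta$ and applying Cauchy--Schwarz with absorption on the right yields the Caccioppoli-type bound
\begin{equation*}
\int_{\mathbb{R}^N} \sigma^2 |\nabla\Phi|^2 \eta^2\,dx \;\leq\; 4\int_{\mathbb{R}^N} \sigma^2 \Phi^2 |\nabla\eta|^2\,dx.
\end{equation*}

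The next step is to choose $\eta = \eta_R$ so that the right-hand side vanishes as $R \to \infty$. A linear cutoff between $B_R$ and $B_{2R}$ satisfies $|\nabla\eta|^2 \sim R^{-2}$, which is exactly balanced by the growth condition \eqref{7.27} and therefore delivers only a uniform bound. The key device is a logarithmic cutoff: fix $R > 1$ and define $\eta_R \equiv 1$ on $B_R$, $\eta_R \equiv 0$ off $B_{R^2}$, and $\eta_R(x) = \log(R^2/|x|)/\log R$ on the annulus, so that $|\nabla\eta_R|(x) = 1/(|x|\log R)$. I would then decompose $\{R \leq |x| \leq R^2\}$ dyadically into the $\asymp \log R$ annuli $A_k = \{2^k R \leq |x| < 2^{k+1}R\}$ and apply \eqref{7.27} on each ball $B_{2^{k+1}R}$, so that each annular contribution is bounded by $4C/(\log R)^2$; summing gives $\int \sigma^2 \Phi^2 |\nabla\eta_R|^2\,dx \leq C'/\log R$.

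Feeding this back into the Caccioppoli inequality and letting $R \to \infty$ forces $\int_{B_{R_0}}\sigma^2|\nabla\Phi|^2\,dx = 0$ for every $R_0$; since $\sigma > 0$ locally, we conclude $\nabla\Phi = 0$ a.e., i.e.\ $\Phi$ is (a.e.\ equal to) a constant. The main obstacle is precisely the cutoff calibration: the $R^2$ energy growth in \eqref{7.27} is critical with respect to any polynomial cutoff, so the decisive insight is to use the logarithmic rescaling $R \mapsto R^2$, for which the dyadic harmonic summation produces a gain of order $1/\log R$. The remaining tasks — justifying that $\Phi\eta^2$ is an admissible test function at the stated Sobolev regularity, and the approximation arguments needed to apply Cauchy--Schwarz — should be routine.
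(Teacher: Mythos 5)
Your argument is correct, and the setup coincides with the paper's: both derive the same weighted Caccioppoli inequality $\int\sigma^2|\nabla\Phi|^2\eta^2\,dx\le 4\int\sigma^2\Phi^2|\nabla\eta|^2\,dx$ from the weak form of \eqref{7.26} by testing with $\Phi\eta^2$ and absorbing via Cauchy--Schwarz (the absorption is legitimate since $\sigma\in L^\infty_{loc}$ and $\Phi\in W^{1,2}_{loc}$ make the left side finite for compactly supported $\eta$). Where you genuinely diverge is in how the critical growth $CR^2$ in \eqref{7.27} is defeated. The paper keeps the plain cutoff $\zeta_R$ between $B_R$ and $B_{2R}$: Cauchy--Schwarz is applied so that the gradient term on the right is integrated only over the annulus $R<|x|<2R$, which first yields the uniform bound $\int\zeta_R^2\sigma^2|\nabla\Phi|^2\le 4C_1$, hence $\sigma\nabla\Phi\in L^2(\mathbb{R}^N)$, and then the annular factor in \eqref{7.29} is the tail of a convergent integral and tends to $0$, forcing $\int\sigma^2|\nabla\Phi|^2=0$. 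You instead change the cutoff: the logarithmic profile between $B_R$ and $B_{R^2}$ together with the dyadic summation makes the right-hand side itself decay like $1/\log R$, so no self-improvement step is needed. Both proofs are complete; the paper's is marginally shorter and uses only the most elementary cutoff, while yours is quantitative (it gives the rate $\int_{B_{R_0}}\sigma^2|\nabla\Phi|^2\lesssim 1/\log R$) and is the more robust device when the growth hypothesis sits exactly at the critical scale, since it does not rely on already knowing the weighted Dirichlet energy is finite.
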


\begin{proof}[Proof of Theorem $\ref{thmdu}$]
\noindent From $\eqref{7.26}$ we deduce, for any smooth function $\psi$,
\begin{equation}\label{7.28}
\textup{div}(\Phi \psi^2 \sigma^2 \nabla\Phi) \geq \psi^2 \sigma^2 |\nabla \Phi|^2 + 2 \Phi \psi \sigma^2 \nabla \psi \cdot \nabla \Phi.\tag{C3}
\end{equation}
\noindent Let $\zeta$ be a $C^\infty$ function on $[0,\infty)$ with $0 \leq \zeta(t) \leq 1$ and $\zeta(t) =1$ for $0 \leq t \leq 1$, $\zeta(t) = 0 $ for $t \geq 2$. For $R>0$ and $x \in \mathbb{R}^N$ set $\zeta_R (x) = \zeta(|x|/R).$\\
\noindent Taking $\psi = \zeta_R$ in $\eqref{7.28}$ and integrating over $\mathbb{R}^N$, we find, by the divergence theorem,
\begin{align*}
\int_{\mathbb{R}^N} \zeta^2_R \sigma^2 |\nabla \Phi|^2 dx &\leq 2\bigg|\int_{\mathbb{R}^N} \sigma^2 \zeta_R \Phi \nabla \zeta_R \cdot \nabla \Phi dx\bigg|\\
            &\leq 2 \bigg[ \int_{R<|x|<2R} \sigma^2 \zeta^2_R |\nabla\Phi|^2dx \bigg]^{1/2} \bigg[ \int_{\mathbb{R}^N}\sigma^2 \Phi^2 |\nabla \zeta_R|^2 dx  \bigg]^{1/2}.
\end{align*}
\noindent By $\eqref{7.27}$ and the definition of $\zeta_R$, we can find $C_1 > 0$ such that
\begin{equation*}
\int_{\mathbb{R}^N}\sigma^2 \Phi^2 |\nabla \zeta_R|^2 dx \leq C_1,
\end{equation*}
\noindent Therefore
\begin{equation}\label{7.29}
\int_{\mathbb{R}^N} \zeta^2_R \sigma^2 |\nabla \Phi|^2 dx \leq 2 \sqrt{C_1} \bigg[ \int_{R<|x|<2R} \zeta^2_R \sigma^2  |\nabla \Phi|^2 dx  \bigg]^{1/2}.\tag{C4}
\end{equation}
\noindent This implies that
\begin{equation*}
\int_{\mathbb{R}^N} \zeta^2_R \sigma^2 |\nabla \Phi|^2 dx\leq 4 C_1,
\end{equation*}
\noindent and hence, letting $R \rightarrow \infty$ in $\eqref{7.29}$ we obtain
\begin{equation*}
\int_{\mathbb{R}^N} \sigma^2 |\nabla \Phi|^2 dx =0.
\end{equation*}
\noindent This implies $|\nabla \Phi| \equiv 0$ $a.e.$ Hence $\Phi$ is a constant.
\end{proof}

\begin{proof}[Proof of Lemma $\ref{liouville}$]
\noindent Given $W^*$ satisfying $\eqref{W1}$, we want to show that $W^* = e^{\frac{1}{2}y^T D^2 m(x_0) y}$.\\

\noindent First we make the transformation $W^* = e^{-\frac{1}{2}y^T D^2 m(x_0) y}\Phi$. By $\eqref{W1}$, we see that $\Phi$ satisfies
\begin{equation*}
\left\{
\begin{array}{ll}
\text{div}( e^{\frac{1}{2}y^T D^2 m(x_0) y} \nabla \Phi) = 0 \quad \text{ in }\mathbb{R}^N ,\\
0 < \Phi \leq K_3 e^{-\frac{1}{6} y^T D^2 m(x_0) y}\text{, } \\
\sup_{\mathbb{R}^N} \Phi(y)e^{\frac{1}{2}y^T D^2 m(x_0) y} = 1.
\end{array}\right.
\end{equation*}
\noindent It remains to show that $\Phi$ is a constant. By Theorem $\ref{thmdu}$, it suffices to show that for some $C>0$ and every $R>1,$
\begin{equation}\label{W2}
\int_{B_R(0)} e^{\frac{1}{2} y^T D^2 m(x_0) y} \Phi^2dx \leq C R^2.\tag{C5}
\end{equation}
\noindent By noticing that the integrand can be dominated by
\begin{equation*}
e^{\frac{1}{2} y^T D^2 m(x_0) y} \Phi^2 \leq K^2_3 e^{\frac{1}{6} y^T D^2 m(x_0) y},
\end{equation*}
we have immediately that $\eqref{W2}$ is true. Hence the theorem is proved.
\end{proof}

\end{document}